\documentclass[12pt]{article}
\usepackage{amsmath,amssymb}
\usepackage{amsthm}
\usepackage{epsfig}
\usepackage{graphics}
\usepackage{graphicx}
\usepackage{colordvi}
\usepackage{mathrsfs} 
\usepackage{stmaryrd}
\usepackage{geometry}
\usepackage{dsfont}
\usepackage{tikz}
\usetikzlibrary{automata}
\usepackage{url}
\usepackage
{hyperref}
\hypersetup{
    colorlinks=true,
   linkcolor=blue!75!black,
    filecolor=blue!75!black,      
   urlcolor=blue!75!black, 
   citecolor=blue!75!black,
}
\usepackage{enumitem}
\usepackage[textsize=small]{todonotes}
\usepackage[margin=1cm, size=small]{caption}
\usepackage{multirow}
\allowdisplaybreaks[4]
\topmargin -1.7cm \textwidth 7.5in \textheight 9.25in
\oddsidemargin
-.5in \evensidemargin -.5in\marginparwidth 0.4in
\usepackage{color,soul}
\usepackage{xcolor}

\usepackage{etoolbox}
\patchcmd{\thebibliography}{\section*}{\section}{}{}

\newcommand{\1}{\mathds 1}

\newcommand{\C}{{\mathscr C}}

\newcommand{\T}{{\Bbb  T}}

\newcommand{\vep}{{\varepsilon}}

\newcommand{\lmt}{\longmapsto}

\newcommand{\dint}{{\int\!\!\!\int}}

\newcommand{\ms}{\mathscr}
\renewcommand{\P}{{\mathbb P}}
\newcommand{\E}{{\mathbb E}}

\renewcommand{\d}{{\mathrm d}}

\newcommand{\Z}{{\Bbb Z}}
\newcommand{\R}{{\Bbb R}}
\newcommand{\A}{{\widehat{A}}}
\newcommand{\RA}{{R}}

\newcommand{\Cov}{{\rm Cov}}
\newcommand{\lf}{\lfloor}
\newcommand{\rf}{\rfloor}

\renewcommand{\i}{{\mathtt i}}
\newcommand{\defeq}{{\stackrel{\rm def}{=}}}
\renewenvironment{proof}{{\noindnt \bfseries Proof.}}{*something*}

\renewenvironment{proof}[1][\proofname]{\noindent {\bfseries #1.}\;}{\hfill\ensuremath{\blacksquare}\\}
\renewcommand{\S}{{\mathcal S}}

\usepackage{currfile}
\usepackage{fancyhdr}
\pagestyle{plain}
  
\fancyhead[C]{{\tt file:\currfilename\; date:\today}} 
\fancyhead[R]{}
\fancyhead[L]{}

\newtheoremstyle{slantthm}{10pt}{10pt}{\slshape}{}{\bfseries}{}{.5em}{\thmname{#1}\thmnumber{ #2}\thmnote{ (#3)}.}
\newtheoremstyle{slantrmk}{10pt}{10pt}{\rmfamily}{}{\bfseries}{}{.5em}{\thmname{#1}\thmnumber{ #2}\thmnote{ (#3)}.}

\begin{document}
\theoremstyle{slantthm}
\newtheorem{thm}{Theorem}[section]
\newtheorem{prop}[thm]{Proposition}
\newtheorem{lem}[thm]{Lemma}
\newtheorem{cor}[thm]{Corollary}
\newtheorem{defi}[thm]{Definition}
\newtheorem{disc}[thm]{Discussion}
\newtheorem*{nota}{Notation}
\newtheorem{conj}[thm]{Conjecture}

\theoremstyle{slantrmk}
\newtheorem{ass}[thm]{Assumption}
\newtheorem{rmk}[thm]{Remark}
\newtheorem{eg}[thm]{Example}
\newtheorem{que}[thm]{Question}
\numberwithin{equation}{section}
\newtheorem{quest}[thm]{Quest}
\newtheorem{prob}[thm]{Problem}

\title{\bf Rescaled Whittaker driven stochastic differential equations converge to the additive stochastic heat equation}

\author{Yu-Ting Chen\footnote{Department of Mathematics, University of Tennessee, Knoxville, United States of America.}}

\date{\today}

\maketitle
\abstract{We study SDEs arising from limiting fluctuations in a 
 $(2+1)$-dimensional surface growth model called the Whittaker driven particle system, which is believed to be in the anisotropic Kardar--Parisi--Zhang class.
 The main result of this paper proves an irrelevance of nonlinearity in the surface growth model in the continuum by weak convergence in a path space; the first instance of this irrelevance is obtained recently for this model in terms of the covariance functions along certain diverging characteristics. With the same limiting scheme, we prove that the derived SDEs converge in distribution to the
 additive stochastic heat equation in $C(\R_+,\S'(\R^2))$. The proof addresses the solutions as stochastic convolutions where the convolution structures are broken by discretization of the diverging characteristics.\\

\noindent\emph{Keywords:} Stochastic heat equations ; stochastic convolutions ; surface growth models. 
\smallskip 

\noindent\emph{Mathematics Subject Classification (2000):} 
60H10, 60H15, 60G15
}

\tableofcontents

\section{Introduction and main results}\label{sec:intro}
In this paper, we consider rescaled limits of the Whittaker driven stochastic differential equations (SDEs) obtained in the recent work~\cite{BCT}.
These SDEs arise as limiting fluctuations of an interacting particle system modeling $(2+1)$-dimensional surface growth. The model is named the Whittaker driven particle system for its connections with methods in integrable probability (see \cite{BC:Mac} and the references therein) and is originally introduced in \cite{CT} to study the anisotropic Kardar--Parisi--Zhang class in $(2+1)$ dimensions. Mathematical results for this class are possible, but very little is known compared to the $(1+1)$-dimensional class.

In \cite{BCT}, the SDEs derived from the Whittaker driven particle system obey the following linear system indexed by sites in a two-dimensional discrete torus $\mathcal R_m$ with size $m^2$:
\begin{align}\label{sde:BCT}
\d \xi^m_t(x)=\sum_{y\in \mathcal R_m}A_{x,y}\xi^m_{t}(y)\d t+\sqrt{v}\d W_t(x),\quad x\in \mathcal R_m.
\end{align}
Here, $W=\{W(x);x\in \mathcal R_m\}$ is an $m^2$-dimensional standard Brownian motion and $v$ is a constant defined by the parameters of the particle system. 
In addition to the particular geometry of $\mathcal R_m$ as  a certain parallelogram in $\Z^2$ 
subject to periodic boundary conditions,
the main characteristics of the derived SDEs come from the constant drift coefficient matrix $A$. See Proposition~\ref{prop:Rm} and Example~\ref{eg:A} for the precise forms. In more detail, these SDEs arise from the site-wise fluctuation fields of the Whittaker driven particle system mentioned above by taking central limit theorem type limits. Since the jump rates of this particle system are defined by  total asymmetry and algebraic complexity in using the planarity of the space, the SDEs inherit these properties by the matrix $A$ as well as the coefficient $v$. In particular, 
in terms of formal connections between the SDEs and stochastic heat equations, we note that  the drift terms of the SDEs do not take the form of a discretization of the Laplacian (the matrix has zero row sums but is not even a generator matrix).  
See \cite{CT}  and \cite[Sections~1--3]{BCT} for more details of this planar particle system and further connections with 
the SDEs. 

\vspace{-.2cm}

\paragraph{\bf The anisotropic Kardar--Parisi--Zhang class.}
The Whittaker driven particle system  is introduced  in \cite{CT} to study the anisotropic Kardar--Parisi--Zhang class in $(2+1)$ dimensions. This class goes back to Villain~\cite{Villain}.
It consists  of height functions  in the continuum of generic surface growth models  where growths  along the two directions of a spatial coordinate frame are not related by symmetry. 
In this case,  the time evolution of a height function  $H(x,t)$  obeys the following singular stochastic partial differential equation (SPDE): 
for  $(x,t)\in \R^2\times \R_+$,
\begin{align}\label{AKPZ}
\frac{\partial H}{\partial t}(x,t)=\nu\Delta H(x,t)+\langle \nabla H, \Lambda\nabla H\rangle (x,t)+\sigma\dot{W}(x,t), \end{align}
where $\dot{W}$ is a space-time white noise and the three terms on the right-hand side physically capture surface tension, lateral surface growth, and random fluctuation, respectively, in the surface growth.
Anisotropy refers to the property that the eigenvalues of the $2\times 2$ symmetric matrix $\Lambda$ in (\ref{AKPZ}) have different signs. This complements the case in $(2+1)$-dimensions studied earlier by Kardar, Parisi and Zhang  \cite{Kardar--Parisi--Zhang}, which defines the isotropic case where the eigenvalues of $\Lambda$ have the same signs. Note that the case of two spatial dimensions  is singled out in \cite{Kardar--Parisi--Zhang} for its criticality leading to a notion of marginal relevance of nonlinearity.
See, for example, the lectures of Kardar~\cite{Kardar}
for more on the physical developments of the Kardar--Parisi--Zhang equations in one and two spatial dimensions and the monograph of Barab\'asi and Stanley~\cite{BS} for an introduction to these equations in all dimensions.

The most studied case of the Kardar--Parisi--Zhang class in one spatial dimension now leads to many-faceted mathematical investigations. See \cite{ACQ,BQS,GJ,GIP,H:Reg,Kup}, to name but a few. This class and the isotropic class both feature predicted nonlinearity in the roughness of height functions. By contrast,
the anisotropic class is noted for the prediction by Wolf~\cite{Wolf} on the irrelevance of nonlinearity. 
The prediction states that in the limit of large time, the coefficient $\sigma $ of the space-time white noise in (\ref{AKPZ})
is not pulled along significantly by the nonlinear term $\langle \nabla H, \Lambda\nabla H\rangle$ that is responsible for the singularity of the SPDE in (\ref{AKPZ}). The  overall effect is that the expected noise should behave like the expected noise in the corresponding  Edwards--Wilkinson equation \cite{EW}, that is, a $(2+1)$-dimensional additive stochastic heat equation (e.g. Walsh's lectures~\cite[Chapter~5]{Walsh}):
\begin{align}\label{def:ASHE}
\frac{\partial H}{\partial t}(x,t)=\nu\Delta H(x,t)+\sigma\dot{W}(x,t).
\end{align}
Here, the  SPDE in (\ref{def:ASHE}) was originally introduced in \cite{EW} for $(2+1)$-dimensional surface growth without the asymmetry from lateral growth  leading to the nonlinear term in (\ref{AKPZ}). (To obtain (\ref{def:ASHE}), \cite{EW} imposed Langevin equations for the Fourier
modes of the height function, which is reminiscent of the approach for the Whittaker drive SDEs discussed below.)
 In stark contrast to the additive stochastic heat equations, the anisotropic SPDE in (\ref{AKPZ}) remains mathematically  out of reach for several basic aspects including the existence of solutions. Accordingly mathematical results are very few. See \cite{Toninelli2} for a broad discussion of Wolf's  prediction and the mathematical progress. 

\vspace{-.2cm}

\paragraph{\bf Expected noise in the Whittaker driven SDEs.}
Our main object of  this paper is a connection, among several other things, 
proven in \cite{BCT}. By the Whittaker driven particle system,
it gives the first instance to prove rigorously Wolf's prediction on the irrelevance of nonlinearity in the form of expectations. 
The connection is established for the SDEs in (\ref{sde:BCT})
subject to  general noise coefficients $v\in (0,\infty)$ and matrices $A$ satisfying only key features of the drift coefficient matrices in the Whittaker driven SDEs   (Assumption~\ref{ass}). 
The main quantitative assumption states that the Taylor expansion of
the Fourier transform
\begin{align}\label{def:FA}
\A(k)\stackrel{\rm def}{=}&\sum_{x\in \mathcal R_m}A_{x,0}e^{-\i \langle x,k\rangle},\quad k\in \R^2,
\end{align} 
takes the following form:
\begin{align}\label{A:Taylorintro}
\A(k)=-\i\langle k,U\rangle+\frac{\langle k,Qk\rangle}{2}+\mathcal O(|k|^3),\quad k\to 0,
\end{align}
for a real vector $U$ and a strictly negative definite matrix $Q$. The matrix $A$ thus deviates from a ``Laplacian'' additively  in its Fourier transform by the pure imaginary translation $-\i\langle k,U\rangle$ as well as the error term $\mathcal O(|k|^3)$.  
In the rest of this section, the SDEs in (\ref{sde:BCT}) are assumed to be under this general setup unless otherwise mentioned.

The connection from \cite{BCT} states that, with $V=\sqrt{-Q}$, 
the limiting covariance  function
\begin{align}\label{scaling:covariance}
\begin{split}
&\lim_{\delta\to 0+}\lim_{m\to\infty}\Cov[X^m_s(x);X^m_t(y)],\quad 0<s<t,\;x,y\in\R^2,
\end{split}
\end{align}
of the two-parameter processes
\begin{align}\label{def:Xmintro}
X^m_t(z)\;\defeq \;\xi^m_{\delta^{-1}t}\big(\lf \delta^{-1}Ut+\delta^{-1/2}V^{-1/2}z\rf\big)
\end{align}
exists. Moreover,
the limit
coincides with the covariance function $\kappa_{s,t}(x,y)$ of
the $\S'(\R^2)$-valued solution $X$ to an additive stochastic heat equation as in (\ref{def:ASHE}):
\begin{align}\label{def:kappa}
 \Cov[X_s(\phi_1);X_t(\phi_2)]=\int_{\R^2}\d x\int_{\R^2}\d y\kappa_{s,t}(x,y)\phi_1(x)\phi_2(y).
\end{align}
In addition to the usual diffusive rescaling $(\delta^{-1/2}z,\delta^{-1}t)$ of space and time in (\ref{def:Xmintro}), as pointed out in \cite{BCT}, the main feature of the limit scheme in (\ref{scaling:covariance}) is  a discretization of space by the following sets of time-adaptive meshes: 
\begin{align}\label{mesh}
\lf \delta^{-1}Ut+\delta^{-1/2}V^{-1/2}z\rf, \quad z\in \R^2.
\end{align}
These meshes naturally induce distinguished characteristics in space and time that diverge as $\delta\to 0+$. See also \cite{BF,BCF, CSZ} for rescaled limits of closely related growth models in $(2+1)$ dimensions and \cite{MU,GRZ} for convergences to the  Edwards--Wilkinson equations in three and higher spatial dimensions.

The convergence in (\ref{scaling:covariance}) brought to the process level is not a consequence given the convergence of the covariance functions already obtained, although  the limiting SPDE is very simple. This is attributable to several features in
the SDEs (\ref{sde:BCT}) and the time-dependent nature of the spatial discretization in (\ref{mesh}). They begin with the fact that the  useful positivity in matrix exponentials solving the mean functions of the rescaled densities does not hold for the SDEs derived from the Whittaker driven particle system (see \eqref{soln:xit_matrix} and Example~\ref{eg:A}). 
Further issues arise since the rescaled densities in (\ref{def:Xmintro}) appear to have irregular discontinuity due to the diverging spatial mesh points
 and it is well-known that the limiting covariance kernel defined in (\ref{def:kappa}) explodes at equal times and equal spatial points leading to non-solvability of the stochastic heat equation by mild solutions. It is neither clear to us whether $X^\delta$ obeys useful exact dynamics. We will give more detailed discussions below when explaining the proof of the main theorem.

\vspace{-.2cm}

\paragraph{\bf Main theorem.}
We follow the same double limit scheme in (\ref{scaling:covariance}) and prove that solutions to the generalized Whittaker driven SDEs  (\ref{sde:BCT})  converge weakly to the solution of an additive stochastic heat equation as distribution-valued processes. This proves in particular the pathwise Edwards--Wilkinson
fluctuation in the Whittaker driven particle system via the SDEs, and hence, may  suggest the possibility of further pathwise investigations of the anisotropic SPDE (\ref{AKPZ}). Note that \cite[Theorem~1]{BCT} proves weak convergence of the fluctuations of the particle system to these SDEs.

To carry out the double limit scheme in (\ref{scaling:covariance}),
we first embed $\mathcal R_m$ increasingly into $\Z^2$ so that they fill the whole space as $m\to\infty$. Then the weak convergence proven in this paper is established by the following two separate results:
\begin{align}
\label{intro:Xinfty0}
\{\xi^m_t(x)\}_{x\in \Z^2}&\xrightarrow[m\to\infty]{\rm (d)} \{\xi_t^\infty(x)\}_{x\in \Z^2}\quad\mbox{ in }C(\R_+,\R)^{\Z^2},\\
\label{intro:Xinfty}
X^\delta &\xrightarrow[\delta\to 0+]{\rm (d)} X\quad\mbox{in }C(\R_+,\S'(\R^2))
\end{align}
for the distribution-valued processes $X^\delta$ defined by
\begin{align}\label{def:Xdeltaintro}
 X^\delta_t(\phi)\;\defeq \int_{\R^2}\d z\,\xi^\infty_{\delta^{-1}t} \big(\lf \delta^{-1}Ut+\delta^{-1/2}V^{-1/2}z\rf\big)  \phi(z).
\end{align}
Here in (\ref{intro:Xinfty0}), $x\mapsto \xi^m(x)$ is understood to be zero outside $\mathcal R_m$ and $\xi^\infty$ is a Gaussian process with explicitly defined mean and covariance functions in terms of Fourier transforms (Proposition~\ref{prop:Xinfty}). Note that the density of $X^\delta$ in (\ref{def:Xdeltaintro}) is subject to the same rescaling of both space and time as in (\ref{scaling:covariance}). Also, (\ref{intro:Xinfty0}) and (\ref{intro:Xinfty}) can be integrated in the obvious way for the weak convergence of the distribution-valued processes with densities $X^m$ defined by (\ref{def:Xmintro}) if one passes the double limits in (\ref{scaling:covariance}).

The main theorem of this paper is given by Theorem~\ref{thm:main} for the proof of (\ref{intro:Xinfty}). We use Mitoma's conditions \cite{Mitoma} on the tightness of probability measures on $\S'(\R^2)$-valued path spaces. The major argument here is devoted to proving  tightness of the laws of the family $\{X^\delta(\phi)\}_{\delta\in (0,1)}$ defined in (\ref{def:Xdeltaintro}) for a Schwartz function $\phi$. In particular, the proof of Theorem~\ref{thm:main} does not use the asymptotics in (\ref{scaling:covariance}) as $\delta\to 0+$ obtained in \cite{BCT}.

To prove tightness of the family $\{X^\delta(\phi)\}_{\delta\in (0,1)}$, we first notice that the expected moduli of continuity in the densities of $X^\delta(\phi)$'s
are complicated by the Fourier character of their covariance functions (defined by the Gaussian process $\xi^\infty$ in (\ref{intro:Xinfty0})).
We have to carefully address  by precise calculations the feature of the density of $X^\delta$ that the time-adaptive spatial mesh points in (\ref{mesh}) are in use and they are defined by mixtures of space and time subject to different scalings. 

The key issue here arises from the presence of the floor function $z\mapsto \lf z\rf $ in (\ref{mesh}). This function already defines discontinuity in the density of $X^\delta$, and so it becomes natural to expect that
the test function $\phi$
in $X^\delta(\phi)$ would help smooth things out. We use the following stochastic integral representation of  $X^\delta(\phi)$ after re-centering to make explicit the smoothing effect  as well as the whole process under consideration: 
\begin{align}
\begin{split}
\label{M:SI}
&\sqrt{v}\int_0^t \int_{\delta^{-1/2}\T^2}\Re\Phi^\delta_t(r,k)W^1(\d r,\d k)+\sqrt{v}\int_0^t \int_{\delta^{-1/2}\T^2}\Im\Phi^\delta_t(r,k)W^2(\d r,\d k),
\end{split}
\end{align}
where 
\begin{align}\label{Phi:intro}
\begin{split}
\Phi^\delta_t(r,k)&=e^{\delta^{-1}(t-r)[\A(\delta^{1/2}k)+\i\langle \delta^{1/2}k,U\rangle]} \\
&\times \frac{1}{2\pi}\int_{\R^2}\d z\phi(z)e^{\i\langle \delta^{1/2}k,\lf \delta^{-1}Ut+\delta^{-1/2}V^{-1/2}z\rf\rangle -\i\langle \delta^{1/2}k,\delta^{-1}Ut\rangle}
\end{split}
\end{align}
and $W^1$ and $W^2$ are independent space-time white noises on $\R_+\times \R^2$ (Section~\ref{sec:SI}). Then
(\ref{Phi:intro}) shows that the floor function interferes cancellation of the two growing, time-dependent factors $\delta^{-1}Ut$ 
in the Fourier transform of $\phi$, since there is  a discretization of the first of them by the floor function. Nevertheless, if this cancellation were viable, then the space-time stochastic integrals in (\ref{M:SI}) would reduce to convergent stochastic convolutions. We develop several methods to address this property which may be extended for proving convergence of more general stochastic integrals where convolution structures are broken by discretization.

By the stochastic integrals in (\ref{M:SI}), the proof of Theorem~\ref{thm:main} leads to martingale problem characterizations for limits of the re-centered processes. As the reader may have already noticed, it gives an alternative explanation  why  the choice of the time-adaptive meshes (\ref{mesh}) under the diffusive scaling $(\delta^{-1/2}z,\delta^{-1}t)$ is necessary. Moreover, the natural limit of (\ref{M:SI})  as $\delta \to 0+$ 
arises under the assumption \eqref{A:Taylorintro} and satisfies (\ref{M:SI}) with $\Phi^\delta_t$ replaced by
\begin{align}\label{def:Phi0-intro}
\Phi^0(r,k)=e^{(t-r)Q(k)/2}\frac{1}{2\pi}\int_{\R^2}\d z\phi(z)e^{\i\langle k,V^{-1}z\rangle}.
\end{align}
The characteristic of the corresponding stochastic integral as a solution to an additive stochastic heat equation then  follows
upon Fourier inversions.

\paragraph{\bf Organization of this paper.} In Section~\ref{sec:FT}, we discuss the explicit solutions of the system (\ref{sde:BCT}) and the proof of (\ref{intro:Xinfty0}) in Proposition~\ref{prop:Xinfty}.  In Section~\ref{sec:rescale}, we 
state Theorem~\ref{thm:main}. 
The steps of its proof are explained in more detail at the end of Section~\ref{sec:rescale}. 
In Section~\ref{sec:noise}, details for the above discussions  consist in the proof of the convergence   of $X^\delta$ after re-centering. The convergence of the mean functional of $X^\delta$ is a real-analysis result and is proven in Section~\ref{sec:drift}.
As we need more complicated notation after Section~\ref{sec:FT}, the reader can find a list of frequent notations for Sections~\ref{sec:rescale}--\ref{sec:drift} at the end of Section~\ref{sec:LON}.

\section{Fourier representations of the solutions}\label{sec:FT}
In this section, we describe the SDEs studied in \cite{BCT} in more detail and discuss the Fourier transforms of the solutions. This section ends with a Fourier characterization of the solutions in the limit of infinite volume.

First, let us describe in more detail the discrete torus $\mathcal R_m$ that parameterizes the SDEs  (see \cite[Section~2]{BCT}). Given two positive integers $m_2$ and $m$ such that 
$m_2/m\in (0,1)$, 
the torus $\mathcal R_m$ is defined to be the quotient group $\mathbb  Z^2/\!\!\sim $, where the equivalence relation $\sim $ is given by:
\begin{align}\label{def:eqm}
x\sim  y\Longleftrightarrow &\,x+(j_1m,j_2m)=y+(j_2m_2,0)\quad\mbox{ for some }j_1,j_2\in \mathbb  Z.
\end{align}
The quotient group $\Z^2/\!\!\sim$ can be identified with a discrete parallelogram subject to the periodic boundary conditions to be defined in (\ref{Rm}), which is suitable for the  purpose of this paper.  Whenever $\mathcal R_m$ is used as a set, we always refer to this discrete parallelogram unless otherwise mentioned. See Figure~\hyperlink{fig1}{1} for an example.

\begin{figure}[t]\hypertarget{fig1}{}
\begin{center}
\includegraphics[scale=.3, page=1]{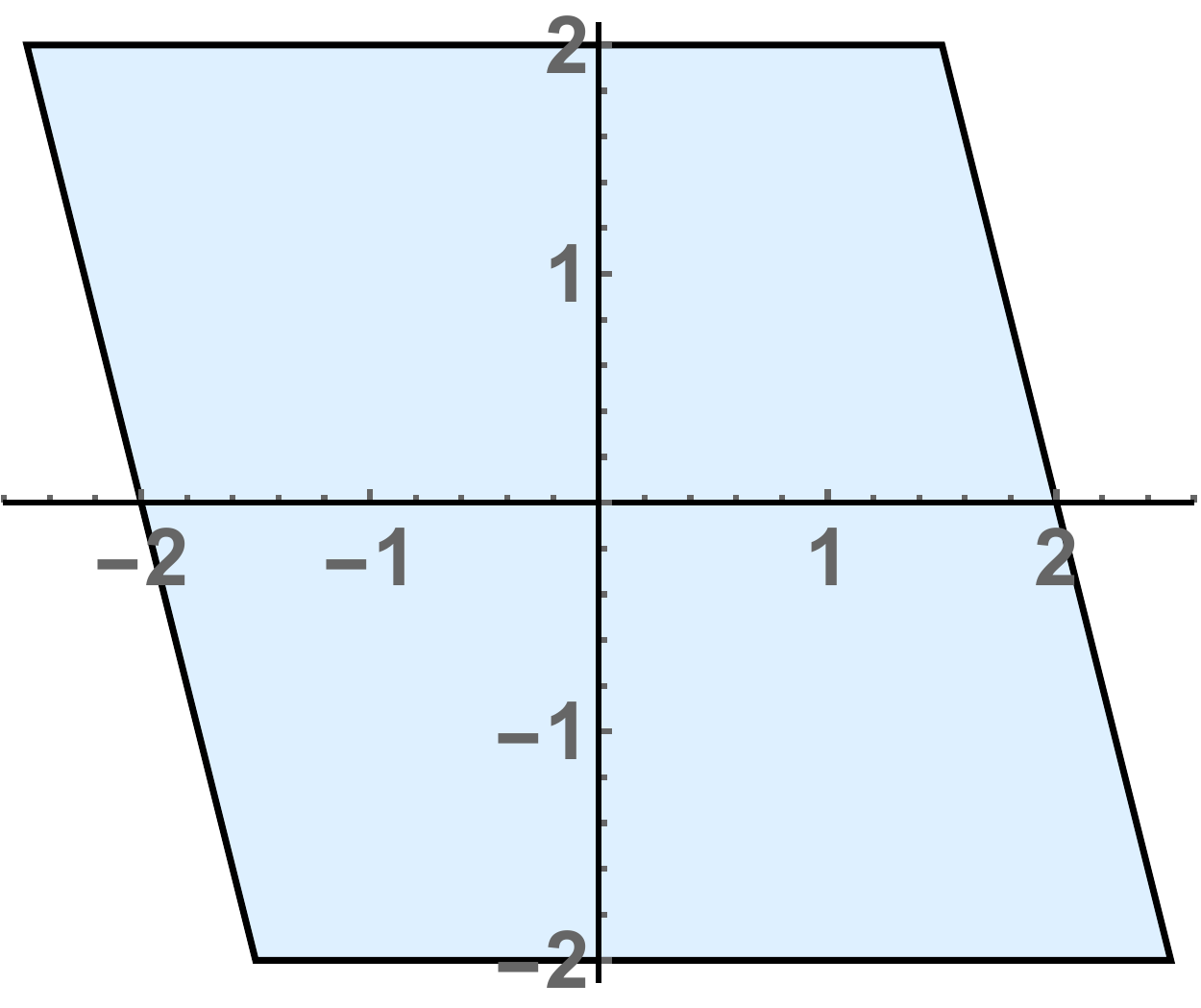}
\end{center}
\captionsetup{justification=centering}
\caption{$\mathcal R_m$ with $m=4$ and $m_2=1$.}
\end{figure} 

\begin{prop}\label{prop:Rm}\sl
The quotient group $\Z^2/\!\!\sim$ is isomorphic to the quotient group with points in the  discrete parallelogram
\begin{align}\label{Rm}
\left\{(x_1,x_2)\in \Z^2\left|-\frac{m}{2}\leq x_2<\frac{m}{2},-\frac{m}{2}-\frac{m_2}{m}x_2\leq x_1<\frac{m}{2}-\frac{m_2}{m}x_2\right.\right\}
\end{align}
 subject to the pasting rule $\equiv$ defined as follows:
\begin{enumerate}
\item [\rm (1)] Points on the lower and upper edges are pasted together by the following rule:
\[
\left(x_1,-\frac{m}{2}\right)\equiv  \left(x_1-m_2,\frac{m}{2}\right),\quad \forall\; x_1\in \left[-\frac{m}{2}+\frac{m_2}{2},\frac{m}{2}+\frac{m_2}{2}\right)\cap \mathbb  Z,
\]
that is, along the direction defining the lateral edges. \smallskip 
\item [\rm (2)] Points on the left and right edges are pasted together horizontally.
\end{enumerate}  
\end{prop}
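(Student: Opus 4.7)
The plan is to recognize that $\mathbb Z^2/\!\!\sim$ is really the quotient by a rank-two sublattice, identify the parallelogram in (\ref{Rm}) as a fundamental domain, and then read off the pasting rules from the two generating translations restricted to the boundary of that fundamental domain.

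First I would rewrite the equivalence (\ref{def:eqm}) in additive form: $x\sim y$ iff $x-y\in\Lambda$, where
\begin{align*}
\Lambda\;=\;\bigl\{(j_2m_2-j_1m,\,-j_2m):j_1,j_2\in\mathbb Z\bigr\}\;=\;\mathbb Z\cdot(m,0)\;+\;\mathbb Z\cdot(m_2,-m).
\end{align*}
In particular $\mathbb Z^2/\!\!\sim$ is literally the abelian group $\mathbb Z^2/\Lambda$, so the proposition reduces to exhibiting a set-theoretic fundamental domain for $\Lambda$ with boundary identifications that realize the quotient. The lattice $\Lambda$ has covolume $|\det\bigl(\begin{smallmatrix}m&m_2\\0&-m\end{smallmatrix}\bigr)|=m^2$, which already matches the expected cardinality $m^2$ of $\mathcal R_m$ and the (continuous) area of the parallelogram in (\ref{Rm}).

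Next I would show that each class in $\mathbb Z^2/\Lambda$ has a unique representative in the parallelogram (\ref{Rm}) by a two-stage reduction. Given $(x_1,x_2)\in\mathbb Z^2$, pick the unique $j\in\mathbb Z$ with $-m/2\le x_2+jm<m/2$ and replace $(x_1,x_2)$ by $(x_1,x_2)+j(-m_2,m)$; this forces the second coordinate into the required range while staying in the same class. Then, with $x_2$ fixed, the constraint on the first coordinate is a half-open interval of length exactly $m$, so a unique further shift by an integer multiple of $(m,0)\in\Lambda$ puts $x_1$ into $[-m/2-(m_2/m)x_2,\,m/2-(m_2/m)x_2)$. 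Uniqueness is automatic because any two representatives in (\ref{Rm}) differing by an element of $\Lambda$ would have to differ by $0$ once the coordinates are forced into these half-open ranges. This provides the set-theoretic bijection between the quotient and the parallelogram.

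Finally I would verify that the two pasting rules capture exactly the action of $\Lambda$ on the boundary of the parallelogram. The vector $(-m_2,m)\in\Lambda$ sends the lower edge $\{x_2=-m/2\}$ to the upper edge $\{x_2=m/2\}$ while shifting the first coordinate by $-m_2$; checking that the horizontal range at $x_2=-m/2$ (namely $[-m/2+m_2/2,\,m/2+m_2/2)$) maps bijectively to the horizontal range at $x_2=m/2$ yields rule (1) verbatim. The vector $(m,0)\in\Lambda$ identifies the left and right edges of the parallelogram horizontally at every fixed $x_2$, which is exactly rule (2). The main (and essentially only) obstacle is the bookkeeping in this last step: one must carefully track how the half-open ranges shift with $x_2$ so that the endpoints of the lower and upper edges are correctly paired under the slanted translation $(-m_2,m)$, and so that no corner point is double-counted. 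Once these identifications are verified, the stated isomorphism follows by combining the bijection from the second step with the boundary gluings from the third.
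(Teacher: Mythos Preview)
Your proposal is correct and follows essentially the same approach as the paper: both arguments identify $\sim$ as the quotient by the rank-two sublattice generated by $(m,0)$ and $(m_2,-m)$, then show the parallelogram (\ref{Rm}) is a fundamental domain by the same two-step reduction (first fix $x_2$ into its range, then $x_1$). The paper's proof is terser and stops once the bijection is established, leaving the pasting rules implicit; your explicit verification of rules (1) and (2) from the lattice generators is a welcome addition but not a different method.
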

\begin{proof}[\bf Proof]
Write $\mathcal P_m$ for the discrete set defined in (\ref{Rm}). 
For $x,y\in \mathcal P_m$, $x\sim y$ implies that  $j_2=0$ since $-m/2\leq x_2,y_2<m/2$, and hence, $x_2=y_2$. Similarly, $j_1=0$ and $x_1=y_1$. Also, any point in $\Z^2$ is $\sim$-equivalent to a point in $\mathcal P_m$. We conclude that there is a natural isomorphism between equivalence classes in $\Z^2/\!\!\sim$ and those in $\mathcal P_m/\!\!\equiv$. 
\end{proof}

Next, we restate the assumptions in \cite[Section~4]{BCT} for the SDEs (\ref{sde:BCT}).

\begin{ass}[\bf Coefficients of the SDEs]\label{ass}\rm 
From now on, we assume unless otherwise mentioned that, the coefficients of the SDEs in (\ref{sde:BCT}) are given by a constant $v\in (0,\infty)$ and  a constant matrix $A$ indexed by $\Z^2\times \Z^2$ such that, for some integer $m_0\geq 2$,  the following five conditions are satisfied for every $m\geq m_0$:
\begin{enumerate}
\item [\rm (1)] The matrix $A$ is translation-invariant on the quotient group $\mathcal R_m$:
\[
A_{x,y}=A_{x+z,y+z},\quad \forall\;x,y,z\in \mathcal R_m.
\]

\item [\rm (2)] The Fourier transform $\A(k)$  (\ref{def:FA})  of $A$
 is $2\pi$-periodic and in $\C^\infty(\R^2)$. \smallskip

\item [\rm (3)] 
$\A(0)=\sum_{x\in \mathcal R_m}A_{x,0}=0$. \smallskip

\item [\rm (4)] The function
\begin{align}\label{def:RA}
\RA (k)\stackrel{\rm def}{=}\A(k)+\A(-k)=2\Re \A(k),\quad k\in \R^2,
\end{align}
expands as
\begin{align}\label{RA:taylor}
\RA (k)=Q(k)+\mathcal O(|k|^3),\quad k\to 0,
\end{align}
where $Q(k)=\langle k,Qk\rangle$
for a strictly negative definite matrix $Q$. \smallskip

\item [\rm (5)] The function $\RA (k)$ defined by (\ref{def:RA})
is nonpositive and its only zero in $\mathbb  T^2$ is $k=0$. Here and throughout this paper, $\T^d=[-\pi,\pi]^d$ for $d\geq 1$ is a set and no periodic boundary conditions are imposed. \hfill $\blacksquare$ 
\end{enumerate}
\end{ass}

Conditions (2)--(5) in Assumption~\ref{ass}  are imposed for the Fourier transform of the sub-matrix of $A$ restricted to $\mathcal R_m\times \mathcal R_m$ for every $m\geq m_0$. The Fourier transform does not depend on the set representation of the quotient group $\mathcal R_m$, and so the choice in Proposition~\ref{prop:Rm} applies.
Moreover, according to the applications of these assumptions in \cite{BCT}, it is understood that the Fourier transform of $A$ restricted to  $\mathcal R_m\times \mathcal R_m$ is identical to the Fourier transform of the full matrix $A$ on $\Z^2\times \Z^2$ defined by  (\ref{def:FA})  with $\mathcal R_m$ replaced by $\Z^2$. It follows that $x\mapsto A_{x,0}$ has a finite support.

\begin{eg}\label{eg:A} \rm 
In \cite{BCT}, the SDEs derived from the Whittaker driven particle system on $\mathcal R_m$ are defined by (\ref{sde:BCT}) with the following coefficients:
\[
v=[(1-e^{-B})(1-e^{-D})]/(1-e^{-C})
\]
and 
\begin{align*}
A_{x,y}=&
\left\{
\begin{array}{ll}
\displaystyle -\frac{e^{-B}(1-e^{-D})}{1-e^{-C}} -\frac{e^{-C}(1-e^{-B})(1-e^{-D})}{(1-e^{-C})^2} +\frac{e^{-D}(1-e^{-B})}{1-e^{-C}},&y=x,\\
\vspace{-.2cm}\\
\displaystyle \frac{e^{-B}(1-e^{-D})}{1-e^{-C}}&y=x+(1,-1),\\
\vspace{-.2cm}\\
\displaystyle \frac{e^{-C}(1-e^{-B})(1-e^{-D})}{(1-e^{-C})^2}&y=x+(0,-1),\\
\vspace{-.2cm}\\
\displaystyle -\frac{e^{-D}(1-e^{-B})}{1-e^{-C}}&y=x+(-1,0),\\
\vspace{-.2cm}\\
0,&\mbox{otherwise},
\end{array}
\right.
\end{align*} 
for
$D\in (0,\infty), C\in (0,D),\mbox{ and }B=D-C\mbox{ with }C/D=m_2/m$.

Obviously, this matrix $A$ satisfies Assumption~\ref{ass} (1). By this translation invariance of $A$ and the property $\mathcal R_m=-\mathcal R_m$, the functions $\A(k)$ and $R(k)$ defined by (\ref{def:FA}) and (\ref{def:RA}) take the following simple forms: for all $k\in \R^2$,
\begin{align*}
\A(k)&=\sum_{x\in \mathcal R_m}A_{0,x}e^{\i\langle x,k\rangle}
=A_{0,0}+A_{0,(1,-1)}e^{\i(k_1-k_2)}+A_{0,(0,-1)}e^{-\i k_2}+A_{0,(-1,0)}e^{-\i k_1},\\
R(k)&=A_{0,0}+A_{0,(1,-1)}\cos(k_1-k_2)+A_{0,(0,-1)}\cos(k_2)+A_{0,(-1,0)}\cos(k_1),
\end{align*}
which clearly give (2)--(3) in Assumption~\ref{ass}. The strict negative definiteness of $Q$ in (4) and the conditions in (5) need some algebra to verify \cite[Appendix~B]{BCT}. 
See \cite[Proposition~2]{BCT} for these five properties. 
\hfill $\blacksquare$
\end{eg}

Recall that the explicit solution to the system (\ref{sde:BCT}) is given by
\begin{align}\label{soln:xit_matrix}
\begin{split}
\xi^m_t(x)&=\sum_{y\in \mathcal R_m}e^{tA}(x,y)\xi^m_0(y)+\sum_{y\in \mathcal R_m}\sqrt{v}\int_0^t e^{(t-s)A}(x,y)\d W_s(y),\quad \forall\; x\in \mathcal R_m
\end{split}
\end{align}
(cf. \cite[Eq.(6.6) in Section~5.6]{KS_BM}). Here in (\ref{soln:xit_matrix}), $e^{tA}$ is understood to be the usual matrix exponential of the sub-matrix of $A$ restricted to $\mathcal R_m\times \mathcal R_m$. 
Henceforth, we decompose the Gaussian process $\xi^m$ into 
\begin{align}\label{def:AM}
\xi^m=\eta^m+\zeta^m, 
\end{align}
where $\eta^m_t(x)$ and $\zeta^m_t(x)$ are defined by the first and second sums in (\ref{soln:xit_matrix}) and called the {\bf deterministic part} and {\bf stochastic part} of $\xi^m$, respectively.

To apply Assumption~\ref{ass}, we turn to the Fourier transform of $\xi^m$. Define
\begin{align}\label{def:ek}
 f_k(x)\;\defeq \;\frac{1}{m}e^{-\i\langle k,x\rangle}
\end{align}
and
\begin{align}\label{def:FThat}
\widehat{\xi}(k)\;\defeq \;\sum_{x\in \mathcal R_m}\xi(x) f_k(x),\quad \xi\in \mathbb  C^{\mathcal R_m}.
\end{align}
Then Assumption~\ref{ass} (1) and  the definition (\ref{def:FA}) of $\widehat{A}(k)$ imply that 
for any analytic function $F$, the usual multiplier formula holds:
\begin{align}\label{eq:FT}
\widehat{F(A)\xi\,}(k)=F\big(\widehat{A}(k)\big)\widehat{\xi}(k),\quad \forall\; k\in \R^2.
\end{align}

To represent the processes $\eta^m$ and $\zeta^m$ by their Fourier transforms $\widehat{\eta^m}(k)$ and $\widehat{\zeta^m}(k)$,  it is enough to require $k$ be points in the following set: 
\begin{align}\label{def:Km}
\mathcal K_m\defeq \left\{\left.\left(\frac{2\pi}{m}r_1,\frac{2\pi}{m}\Big(\frac{m_2}{m}r_1+r_2\Big)\right)\right|r_1,r_2\in \mathbb  Z,-\frac{m}{2}\leq r_1,r_2<\frac{m}{2}\right\}.
\end{align}
The additional properties that  we need  are summarized in  Lemma~\ref{lem:Feta} below (see
\cite[Chapter~1]{Rudin} or
 \cite[Section~3.1]{BCT}).  For any subset $E$ of $\Z^2$, write
\begin{align}\label{def:bracket}
\langle\phi_1,\phi_2\rangle_{E}=  \sum_{x\in E}\phi(x)\overline{\phi_2(x)}.
\end{align}

\begin{lem}\label{lem:Feta}\sl
Let $f_k(x)$ and $\mathcal K_m$ be defined by (\ref{def:ek}) and (\ref{def:Km}), respectively. Then the following properties hold:
\begin{enumerate}
\item [\rm (1)] For any $k\in \mathcal K_m$, $f_k$ is well-defined on the quotient group $(\mathcal R_m,\sim)$, where the equivalence relation $\sim$ is defined by (\ref{def:eqm}).
\item [\rm (2)] The set $\{f_k\}_{k\in \mathcal K_m}$ forms an orthonormal basis of $\mathbb  C^{\mathcal R_m}$ with respect to the inner product $\langle \,\cdot\,,\cdot\,\rangle_{\mathcal R_m}$ defined by (\ref{def:bracket}). 
\item [\rm (3)] The inversion formula holds: 
\begin{align}\label{eq:eta-eta}
\xi(x)=\sum_{k\in\mathcal K_m}\widehat{\xi}(k)\overline{f_k(x)},\quad \forall\;x\in \mathcal R_m.
\end{align}
\end{enumerate}
\end{lem}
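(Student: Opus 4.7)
The plan is to treat these as standard character-theoretic facts about the finite abelian group $\mathcal R_m \cong \mathbb Z^2/\Lambda$, where $\Lambda$ is the period lattice of $\sim$ (generated by $(m,0)$ and $(-m_2, m)$); its index in $\mathbb Z^2$ equals $m^2$, consistent with the parallelogram (\ref{Rm}) containing $m^2$ lattice points (for each of the $m$ admissible values of $x_2$, the half-open range for $x_1$ has length $m$ and so contains exactly $m$ integers). Each $k \in \mathcal K_m$ is supposed to define the character $x \mapsto e^{-\i\langle k, x\rangle}$ on this group, and the three items of the lemma assert respectively that these expressions descend to the quotient, that they form an orthonormal basis, and that the basis yields Fourier inversion.

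For (1), I would check directly that $\langle k, z\rangle \in 2\pi\mathbb Z$ whenever $z \in \Lambda$. Writing a generic period as $z = (j_1 m - j_2 m_2, j_2 m)$ from (\ref{def:eqm}) and $k$ via the parameterization (\ref{def:Km}), a single-line expansion gives
\[
\langle k, z\rangle = \frac{2\pi r_1}{m}(j_1 m - j_2 m_2) + \frac{2\pi}{m}\Big(\frac{m_2 r_1}{m} + r_2\Big)(j_2 m) = 2\pi r_1 j_1 + 2\pi r_2 j_2 \in 2\pi \mathbb Z,
\]
with the tilt-induced cross terms $\pm 2\pi m_2 r_1 j_2/m$ cancelling. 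This is precisely why $\mathcal K_m$ is given the skewed form in (\ref{def:Km}) rather than a rectangular one, so that $f_k$ descends to $\mathcal R_m$.

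For (2), the inner product reduces to $\langle f_k, f_{k'}\rangle_{\mathcal R_m} = m^{-2}\sum_{x\in \mathcal R_m} e^{-\i\langle k-k', x\rangle}$. When $k=k'$ every summand equals $1$ and we get $|\mathcal R_m|/m^2 = 1$. When $k \neq k'$, a short check using $r_i - r_i' \in (-m, m)\cap \mathbb Z$ shows that $k - k' \notin 2\pi\mathbb Z^2$, so $x \mapsto e^{-\i\langle k-k', x\rangle}$ is a nontrivial character of $\mathcal R_m$ and its sum over $\mathcal R_m$ vanishes by the usual translation trick. This furnishes $|\mathcal K_m| = m^2$ orthonormal vectors in $\mathbb C^{\mathcal R_m}$, a space of dimension $m^2$, hence an orthonormal basis. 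Part (3) is then immediate by expanding $\xi$ in this basis and matching coefficients; equivalently, it is the dual orthogonality $\sum_{k\in \mathcal K_m} f_k(y)\overline{f_k(x)} = \mathbf 1_{x=y}$, which follows from the same character argument with the roles of $\mathcal R_m$ and $\mathcal K_m$ interchanged. The whole argument is bookkeeping on the tilted parallelogram; the only genuine computations are the cancellation in (1) and the distinctness of characters in (2), and I do not foresee any real obstacle.
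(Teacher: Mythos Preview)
Your proof is correct. The paper does not give its own proof of this lemma but simply cites standard references for it; your character-theoretic argument is exactly the expected route, and the only nontrivial verifications---the cancellation in (1) and the check in (2) that distinct $k,k'\in\mathcal K_m$ give distinct characters of $\mathcal R_m$---are carried out correctly.
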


\begin{cor}\sl
With respect to the decomposition in (\ref{def:AM}), it holds that
\begin{align}
\eta^{m}_t(x)
&=\sum_{k\in \mathcal K_m}e^{t\A(k)}\widehat{\xi_0^m}(k)\overline{f_k(x)},\label{eq:Atek1}\\
\zeta^{m}_t(x)
&=\sqrt{v} \sum_{k\in \mathcal K_m}\int_0^t  e^{(t-s)\A (k)}\d\widehat{W}_s(k)\overline{f_k(x)}
\label{eq:Mtek1}
\end{align}
for all $x\in \mathcal R_m$,
where $\{\widehat{W}(k);k\in \mathcal K_m\}$ is an $m^2$-dimensional complex-valued centered Brownian motion defined by
\begin{align}\label{def:FTBM}
\widehat{W}_t(k)\;\defeq \sum_{y\in \mathcal R_m}W_t(y)f_k(y).
\end{align} 
\end{cor}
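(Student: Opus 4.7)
The plan is to deduce both identities directly from the explicit solution \eqref{soln:xit_matrix}, combined with the inversion formula in Lemma~\ref{lem:Feta}~(3) and the multiplier formula \eqref{eq:FT}. The only real input beyond linear algebra is a (finite-dimensional) Fubini exchange for the stochastic integral.

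For the deterministic part $\eta^m_t(x)=\sum_{y\in \mathcal R_m}e^{tA}(x,y)\xi^m_0(y)$, I would view $\eta^m_t$ as the element of $\mathbb C^{\mathcal R_m}$ obtained by applying the linear operator $e^{tA}$ (restricted to $\mathcal R_m\times \mathcal R_m$) to $\xi^m_0$. Applying the inversion formula \eqref{eq:eta-eta} to this element and then applying the multiplier formula \eqref{eq:FT} with $F(z)=e^{tz}$ yields $\widehat{\eta^m_t}(k)=e^{t\A(k)}\widehat{\xi^m_0}(k)$, which gives \eqref{eq:Atek1}.

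For the stochastic part, the cleanest route is to expand the kernel. Letting $\mathbf e_y\in \mathbb C^{\mathcal R_m}$ denote the indicator at $y$ and noting $\widehat{\mathbf e_y}(k)=f_k(y)$, one application of the inversion formula and multiplier formula (with $F(z)=e^{(t-s)z}$) to $e^{(t-s)A}\mathbf e_y$ gives the pointwise expansion
\[
e^{(t-s)A}(x,y)=\sum_{k\in \mathcal K_m}e^{(t-s)\A(k)}f_k(y)\overline{f_k(x)}.
\]
Substituting this into the stochastic integral representation of $\zeta^m_t(x)$ and interchanging the finite sums over $y\in \mathcal R_m$ and $k\in \mathcal K_m$ with the It\^o integral (a routine finite-dimensional stochastic Fubini, since each integrand is bounded in $s\in[0,t]$), the inner sum over $y$ collapses to $\sum_{y\in \mathcal R_m}f_k(y)\,\d W_s(y)=\d \widehat W_s(k)$ by the definition \eqref{def:FTBM}. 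This is precisely \eqref{eq:Mtek1}.

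\noindent\emph{Main obstacle.} There is no genuine obstacle: every step is an identity in $\mathbb C^{\mathcal R_m}$, and all sums are finite. The only point requiring a sentence of justification is the exchange of the stochastic integral with the sum over $k\in \mathcal K_m$, which is standard because the integrand $e^{(t-s)\A(k)}$ is continuous and bounded on $[0,t]$ for each fixed $k$ and the sum is finite. (If one prefers to avoid stochastic Fubini altogether, one can alternatively \emph{define} the right-hand sides of \eqref{eq:Atek1}--\eqref{eq:Mtek1} and verify termwise that they solve the SDE \eqref{sde:BCT}, invoking strong uniqueness of linear SDEs to conclude equality with $\xi^m$.)
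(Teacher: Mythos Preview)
Your proposal is correct and follows essentially the same approach as the paper: the paper also invokes the multiplier formula \eqref{eq:FT} and the inversion formula \eqref{eq:eta-eta} for the deterministic part, and for the stochastic part it expands $e^{(t-s)A}(x,y)$ via $\widehat{\1_y}(k)=f_k(y)$ and then swaps the finite sums with the It\^o integral, exactly as you do. Your explicit remark on the (trivial) stochastic Fubini step is the only addition beyond the paper's two-line computation.
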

\begin{proof}[\bf Proof]
By (\ref{eq:FT}) and the inversion formula in (\ref{eq:eta-eta}),
(\ref{eq:Atek1}) follows and, for (\ref{eq:Mtek1}), we have
\begin{align}
\zeta^{m}_t(x)
&=\sqrt{v}\sum_{y\in \mathcal R_m}\int_0^t\sum_{k\in \mathcal K_m}e^{(t-s)\A(k)}\widehat{\1_y}(k)\overline{f_k(x)}\d W_s(y)\notag\\
&=\sqrt{v} \sum_{k\in \mathcal K_m}\int_0^t  e^{(t-s)\A (k)} \d\widehat{W}_s(k)\overline{f_k(x)}.\notag
\end{align}
\end{proof}

The following theorem uses (\ref{eq:Atek1}) and (\ref{eq:Mtek1}) to characterize the  limit  of $\xi^m$ as $m\to\infty$. In particular, the limiting mean function immediately implied by \cite[(6.4)]{BCT} and the limiting covariance function in \cite[(6.8)]{BCT}  are recovered.
Below for all $m\geq m_0$, we extend $\xi^m$ to the whole space $\Z^2$ by setting 
\[
\xi^m(x)\equiv 0,\quad\forall\;\mbox{$x\in \mathcal R_m^\complement$.}
\]
Similar extension applies to $\eta^m$ and $\zeta^m$. Also,  
we write
$\Cov[X;Y]=\E[X\overline{Y}]-\E[X]\E[\overline{Y}]$
for complex-valued random variables $X$ and $Y$. 

\begin{prop}\label{prop:Xinfty}\sl 
Suppose that $m_2$'s defining $\mathcal R_m$'s are chosen such that 
\[
\displaystyle \lim_{m\to\infty}m_2/m=\overline{m}\in (0,1)
\]
and, for some continuous function $\widehat{\mu}$ on $\mathcal K_\infty$,
\begin{align}\label{IC:conv}
\lim_{m\to\infty}m\widehat{\xi^m_0}(k_m)= \widehat{\mu}(k)\quad\mbox{boundedly}
\end{align}
for all sequences $(k_m)$ such that $k_m\in \mathcal K_m$ and $k_m\to k\in \mathcal K_\infty$.
Here, $\mathcal K_\infty$ is the limiting parallelogram of $\mathcal K_m$ in $\R^2$ as $m\to\infty$:
\begin{align}\label{def:Kinfty}
\mathcal K_\infty
\;\defeq\;  
\Big\{(k_1,k_2)\in \mathbb R^2\Big|-\pi\leq  k_1\leq \pi,-\pi\leq k_2-\overline{m} k_1\leq \pi \Big\}.
\end{align}
Then the sequence of laws of $\{\xi^m(x);x\in \Z^2\}$ converge in distribution in $C(\R_+,\R)^{\Z^2}$ to a Gaussian process $\xi^\infty=\{\xi^\infty(x);x\in \Z^2\}$ characterized by the following equations: for all  $0\leq s\leq t<\infty$ and $x,y\in \Z^2$,  
\begin{align}
\E [\xi^\infty_t(x)]&= \frac{1}{(2\pi)^2}\int_{\T^2}\d k e^{t\A(k)}e^{\i\langle k,x\rangle}\widehat{\mu}(k),\label{def:AinX}\\
\begin{split}
\Cov[\xi^\infty_s(x);\xi^\infty_t(y)]&=\frac{v}{(2\pi)^2}\int_0^s\d r\int_{\T^2}\d ke^{(s-r)\A(k)}e^{\i\langle k,x\rangle}e^{(t-r)\A(-k)}  e^{-\i\langle k,y\rangle}.\label{def:MinX}
\end{split}
\end{align}
In particular, $\xi^\infty$ admits a natural extension, still denoted by $\xi^\infty$, which is a jointly continuous real-valued Gaussian process indexed by $\R_+\times \R^2$. 
\end{prop}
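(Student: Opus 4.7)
The plan is to prove the weak convergence in two steps, convergence of finite-dimensional distributions followed by tightness, and then to obtain the continuous extension to $\R_+\times\R^2$ by Kolmogorov's continuity theorem. Since each $\xi^m$ is Gaussian and the candidate limit is Gaussian, the finite-dimensional convergence reduces to convergence of all means $\E[\xi^m_t(x)]$ and covariances $\Cov[\xi^m_s(x);\xi^m_t(y)]$ to the right-hand sides of (\ref{def:AinX})--(\ref{def:MinX}); moreover, by the product topology, tightness in $C(\R_+,\R)^{\Z^2}$ reduces to tightness of each single-coordinate process in $C(\R_+,\R)$.

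For the identification of means and covariances, I would recognize (\ref{eq:Atek1}) and (\ref{eq:Mtek1}) as Riemann sums over the lattice $\mathcal K_m$ with cell volume $(2\pi/m)^2$, since $\mathcal K_m\subset\mathcal K_\infty$ is evenly spaced and $\mathcal K_\infty$ has Euclidean area $(2\pi)^2$. Using $\overline{f_k(x)} = m^{-1}e^{\i\langle k,x\rangle}$, the mean becomes $(2\pi)^{-2}$ times a Riemann sum of the integrand $e^{t\A(k)}[m\widehat{\xi^m_0}(k)]e^{\i\langle k,x\rangle}$; this integrand is continuous in $k$ and bounded by $|m\widehat{\xi^m_0}(k)|$ since $|e^{t\A(k)}| = e^{t\RA(k)/2}\leq 1$ by Assumption~\ref{ass}(5), and converges pointwise to $e^{t\A(k)}\widehat{\mu}(k)e^{\i\langle k,x\rangle}$ by (\ref{IC:conv}), so dominated convergence yields an integral over $\mathcal K_\infty$ that equals the integral over $\T^2$ in (\ref{def:AinX}) because $\mathcal K_\infty$ is a fundamental domain of $(2\pi\Z)^2$ and the integrand is $(2\pi\Z)^2$-periodic in $k$. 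The covariance calculation is entirely analogous once one uses the It\^o isometry for $\{\widehat{W}(k)\}$, whose orthogonality structure follows from orthonormality of $\{f_k\}$, together with $\overline{\A(k)} = \A(-k)$ coming from realness of $A$.

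For tightness of each coordinate $(\xi^m_t(x))_{t\geq 0}$, I would aim at the Gaussian bound $\E|\xi^m_t(x) - \xi^m_s(x)|^4 \leq C(x,T)(t-s)^2$ on $[0,T]$, uniform in $m$, which yields Kolmogorov's continuity criterion. The mean increment $|\eta^m_t(x) - \eta^m_s(x)|$ is controlled by $C(t-s)$ through the elementary estimate $|e^{t\A(k)} - e^{s\A(k)}|\leq (t-s)|\A(k)|$ (valid because $\Re\A\leq 0$) combined with the uniform $L^\infty$ control on $m\widehat{\xi^m_0}(k)$ from (\ref{IC:conv}) and the boundedness of $|\A|$. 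For the variance of $\zeta^m_t(x) - \zeta^m_s(x)$, the It\^o isometry and Parseval's identity in the form $\sum_y [e^{tA}(0,y)]^2 = m^{-2}\sum_k e^{t\RA(k)}\leq 1$ produce a bound $v(t-s) + C(t-s)^2$ independent of $x$ and $m$; Gaussianity then upgrades this second-moment estimate to the required fourth-moment bound.

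Finally, the continuous extension to $\R_+\times\R^2$ is obtained by defining the Gaussian field with mean and covariance given verbatim by (\ref{def:AinX})--(\ref{def:MinX}) on $\R_+\times\R^2$, which is a legitimate Gaussian kernel as the pointwise limit of the non-negative definite lattice kernels. Kolmogorov's continuity theorem for Gaussian fields then applies after a Fourier-side bound on $\E|\xi^\infty_t(x) - \xi^\infty_s(y)|^2$ by a H\"older norm of $(|t-s|, |x-y|)$, using $|e^{\i\langle k,x\rangle} - e^{\i\langle k,y\rangle}| \leq |k|\,|x-y|$ for spatial regularity and the same exponential increment bound for temporal regularity. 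The most delicate point of the overall argument is obtaining Riemann-sum and It\^o--Parseval bounds that are uniform in $m$: the lattices $\mathcal K_m$ live on a skew parallelogram whose aspect ratio $m_2/m\to\overline m$ varies with $m$, and it is Assumption~\ref{ass} on the Fourier symbol $\A$ that provides the uniformity needed to drive both the convergence of moments and the Kolmogorov-type estimates.
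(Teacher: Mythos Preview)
Your proposal is correct and follows essentially the same route as the paper: convergence of means and covariances via Riemann sums over $\mathcal K_m$ and dominated convergence, tightness of each coordinate via a Kolmogorov fourth-moment bound obtained from a second-moment estimate linear in $t-s$ and Gaussianity, and the continuous extension via Kolmogorov's continuity theorem applied to the explicit Fourier-side covariance. The only cosmetic differences are that the paper bounds $\E|\zeta^m_t(x)-\zeta^m_s(x)|^2$ by a single Fourier computation using \eqref{exp-ineq} rather than your two-piece split $\int_s^t+\int_0^s$, and constructs the extension $\zeta^\infty$ via an explicit $L_2(\R_+\times\T^2)$ reproducing kernel rather than by invoking positive-definiteness as a limit; neither difference is substantive.
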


\begin{proof}[\bf Proof] 
We compute the mean function and covariance function of $\xi^m$ in the limit $m\to\infty$ first. 
 By (\ref{eq:Atek1}), (\ref{IC:conv}) and dominated convergence, 
\begin{align}
\lim_{m\to\infty}\eta^m_t(x)
&=\frac{1}{(2\pi)^2}\int_{\mathcal K_\infty}\d k e^{t\A(k)}\widehat{\mu}(k)e^{\i\langle k,x\rangle}
=\frac{1}{(2\pi)^2}\int_{\T^2}\d k e^{t\A(k)}\widehat{\mu}(k)e^{\i\langle k,x\rangle},\label{mean:infty}
\end{align}
where the last equality follows from the $2\pi$-periodicity of the integrand.

As for the covariance function of $\xi^m$ in the limit $m\to\infty$,  notice that
by Lemma~\ref{lem:Feta} (2),
 the complex-valued Brownian motion in (\ref{def:FTBM}) satisfies
\begin{align}\label{covar:FTW}
\Cov[\widehat{W}_s(k);\widehat{W}_t(k')]=\delta_{k=k'}s,\quad \forall\; 0\leq s\leq t<\infty,\;k,k'\in \mathcal K_m.
\end{align}
Hence, for any $x,y\in \Z^2$ and $m$ large such that $x,y\in \mathcal R_m$, 
(\ref{eq:Mtek1}) gives
\begin{align}
&\Cov[\xi^m_s(x);\xi^m_t(y)]\notag\\
=&\;\frac{v}{m^2}\E\Bigg[\sum_{k,k'\in \mathcal K_m}\int_0^s   e^{(s-r)\A (k)}e^{\i\langle k,x\rangle}\d \widehat{W}_r(k)\times \int_0^t   e^{(t-r)\A (-k')}e^{-\i \langle k',y\rangle}\d\overline{\widehat{W}_r(k')}\Bigg]\notag\\
=&\;\frac{v}{m^2}\sum_{k\in \mathcal K_m}\int_0^s \d re^{(s-r)\A(k)}e^{\i\langle k,x\rangle}e^{(t-r)\A(-k)} e^{-\i\langle k,y\rangle}\label{covar:m}\\
&\hspace{-.5cm}\xrightarrow[m\to\infty]{}
\frac{v}{(2\pi)^2}\int_0^s\d r\int_{\T^2}\d ke^{(s-r)\A(k)} e^{\i\langle k,x\rangle}e^{(t-r)\A(-k)}e^{-\i\langle k,y\rangle}\label{covar:infty}
\end{align}
by dominated convergence and the $2\pi$-periodicity of the integrand as above in (\ref{mean:infty}).

We are ready to prove the weak convergence of $\xi^m$; then (\ref{def:AinX}) and (\ref{def:MinX}) will follow from (\ref{mean:infty}) and (\ref{covar:infty}), respectively, by the  closure of centered Gaussians under weak convergence.
By \cite[Proposition~3.2.4]{EK},
it suffices to show that for any fixed $x\in \Z^2$, the sequence of laws of the real-valued processes $\xi^m(x)$, $m\geq m_0$, is weakly relatively compact in $C(\R_+,\R)$. For this purpose, by Kolmogorov's criterion \cite[Theorem~XIII.1.8]{RY} and the convergence of the mean functions of $\xi^m$'s in (\ref{mean:infty}), the following uniform modulus of continuity is enough: For any fixed $T\in (0,\infty)$, we can find some constants $C_{\ref{mod-00}}$ and $\vep>0$ such that
\begin{align}
&\sup_{m\in \mathbb  N}\E\big[\left|
\zeta^m_t(x)-\zeta^m_s(x)
\right|^4\big]\leq C_{\ref{mod-00}}|t-s|^{1+\vep},\quad \forall\;0\leq s\leq t\leq T.\label{mod-00}
\end{align}

Recall the function $R(k)$ defined by (\ref{def:RA}).
To obtain (\ref{mod-00}), first we use (\ref{covar:m}) with $x=y$ to compute the second moments of the (real) Gaussian variables in (\ref{mod-00}):  For any $0\leq s\leq t\leq T$,
\begin{align}
&\;\E\big[\big|\zeta^m_t(x)-\zeta^m_s(x)
\big|^2\big]\notag\\
\begin{split}
=&\;\frac{v}{(2\pi)^2}\int_0^s \d r\frac{1}{m^2}\sum_{k\in \mathcal K_m}\left(\frac{e^{tR(k)}-1}{R(k)}-2\frac{e^{sR(k)}-1}{R(k)}e^{(t-s)\A(k)}+\frac{e^{sR(k)}-1}{R(k)}\right)\notag
\end{split}\\
\begin{split}
=&\;\frac{v}{(2\pi)^2}\int_0^s \d r\frac{1}{m^2} \sum_{k\in \mathcal K_m}\Bigg(
\frac{e^{(t-s)\RA(k)}(e^{s\RA(k)}-1)-(e^{sR(k)}-1)e^{(t-s)\A(k)}}{R(k)}
\notag\\
&\hspace{1.28cm}+\frac{e^{(t-s)\RA(k)}-1}{R(k)}-\frac{(e^{sR(k)}-1)(e^{(t-s)\A(k)}-1)}{R(k)}\Bigg)\notag
\end{split}\\
\leq &\;\frac{(t-s)v}{(2\pi)^2}\int_0^T\d r\frac{1}{m^2}\sum_{k\in \mathcal K_m}\big(s|\A(k)-R(k)|+1+s|\A(k)|\big)\label{mod0}
\end{align}
by the following inequality:
\begin{align}\label{exp-ineq}
|e^{z_1}-e^{z_2}|\leq \max\{|e^{z_1}|,|e^{z_2}|\}\cdot |z_1-z_2|,\quad\forall\; z_1,z_2\in\mathbb  C.
\end{align}
The required inequality in (\ref{mod-00}) thus follows upon applying to (\ref{mod0}) Assumption~\ref{ass} (2) and the fact that the fourth moment of a centered, real-valued Gaussian with variance $\sigma^2$ is given by $3\sigma^4$.

Next, we show that $\xi^\infty$ admits an extension to a jointly continuous Gaussian process as defined in the statement of the present proposition. The extension to a two-parameter real-valued Gaussian process, say $\zeta^\infty$, follows readily from the standard reproducing kernel argument for Gaussian processes. In more detail, we use the Hilbert space $L_2(\R_+\times \T^2,\d r\d k)$ and the real and imaginary parts of the
 following functions to construct $\zeta^\infty$:
\begin{align}\label{psi:int}
(r,k)\lmt\frac{\sqrt{v}}{2\pi }\1_{[0,s]}(r)e^{(s-r)\widehat{A}(k)}e^{\i\langle k,x\rangle},\quad (s,x)\in \R_+\times \R^2.
\end{align}
(See also Section~\ref{sec:SI}.)
To obtain a jointly continuous modification of $\zeta^\infty$, notice that, for $0\leq s\leq t<\infty$ and $x,y\in\R^2$, (\ref{def:MinX}) gives
\begin{align}
&\;\E\big[\big|\zeta^\infty_s(x)-\zeta^\infty_t(y)\big|^2\big]\notag\\
=&\;\frac{v}{(2\pi)^2}\int_0^s\d r\int_{\T^2}\d k\big|e^{(s-r)\A(k)}e^{\i\langle k,x\rangle}-e^{(t-r)\A(k)}e^{\i\langle k,y\rangle}\big|^2\notag\\
\leq &\;\frac{2v}{(2\pi)^2}\int_0^s\d r\int_{\T^2}\d k\big|e^{(s-r)\A(k)}-e^{(t-r)\A(k)}\big|^2+\big|e^{\i\langle k,x\rangle}-e^{\i\langle k,y\rangle}\big|^2\notag\\
\begin{split}
\leq &\;\frac{2vs}{(2\pi)^2}\int_{\T^2}\d k\left( |\A(k)|^2 |s-t|^2+|x-y|^2\right),
\label{Minfty-norm}
\end{split}
\end{align}
where the next to the last equality uses Assumption~\ref{ass} (5) and the last equality follows from the same assumption and (\ref{exp-ineq}). 

From (\ref{Minfty-norm}) and the Gaussian property of $\zeta^\infty$, we deduce from Kolmogorov's continuity theorem \cite[Theorem~I.2.1]{RY} that $\zeta^\infty$ admits a jointly continuous modification. The proof is complete.   
\end{proof}

\section{Setup for the main theorem}\label{sec:rescale}
In this section, we recall the rescaling from  \cite[Corollary~3.1]{BCT}
for the limiting Gaussian process defined in Proposition~\ref{prop:Xinfty} and then state the main theorem of this paper.

Let $U$ be a real vector defined by
\begin{align}\label{def:U}
U=\i\nabla A(0)
\end{align}
and $V$ be the square root of $-Q^{-1}$ so that
\begin{align}\label{def:V}
Q=-(V^{-1})^2.
\end{align}
(Recall that $Q$ is the strictly negative definite matrix in Assumption~\ref{ass} (4).) Then for any $\delta\in (0,1)$, we define an $\S(\R^2)$-valued process $X^\delta$ by
\begin{align}\label{def:Xdelta}
X^\delta_t(\phi)\;\defeq \int_{\R^2}\d z \xi^{\infty,\delta}_{\delta^{-1}t}(\lfloor \delta^{-1} Ut+\delta^{-1/2}V^{-1}z\rfloor)\phi(z)   ,\quad \phi\in \S(\R^2),
\end{align}
where $\xi^{\infty,\delta}$ is the limiting Gaussian process in Proposition~\ref{prop:Xinfty} and has a constant initial condition $\mu^\delta$. 

Our goal in the rest of this paper is to prove the full convergence of $X^\delta$ to the solution of a stochastic heat equation. The main result is stated in the following theorem.

\begin{thm}[\bf Main theorem]\label{thm:main}\sl
Let Assumption~\ref{ass} be in force and write $V=\sqrt{-Q^{-1}}$ for $Q$ chosen in Assumption~\ref{ass} (4). In addition,
let a family of functions $\{\mu^{\delta}\}_{\delta\in (0,1)}$ in $ \ell_1(\Z^2)$ be given such that
\begin{align}\label{ass:IC}
\sum_{y\in \delta^{1/2}V\Z^2}\delta\mu^\delta(\delta^{-1/2}V^{-1}y)\phi(y)\xrightarrow[\delta\to 0+]{} \mu^0(\phi),\quad \forall\;\phi\in \S(\R^2),
\end{align}
for some $\mu^0\in \S'(\R^2)$.
Then the rescaled processes $X^\delta$ defined by (\ref{def:Xdelta}) satisfy
\[
X^\delta\xrightarrow[\delta\to 0+]{\rm (d)}X^0\quad\mbox{ in }\quad C(\R_+,\S'(\R^2)).
\]
The limiting process $X^0$ is the pathwise unique solution to the following additive stochastic heat equation:
\begin{align}\label{SHE}
\frac{\partial X^0}{\partial t}=\frac{\Delta X^0}{2}+\sqrt{v|\det(V)|}\;\dot{W},\quad X^0_0=|\det(V)|\mu^0,
\end{align}
subject to a $(2+1)$-dimensional space-time white noise $\dot{W}$ on $\R_+\times \R^2$.  
\end{thm}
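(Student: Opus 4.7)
The plan is to apply Mitoma's theorem, which reduces weak convergence of $X^\delta$ in $C(\R_+,\S'(\R^2))$ to tightness of the real-valued marginals $\{X^\delta(\phi)\}_{\delta\in (0,1)}$ in $C(\R_+,\R)$ for each $\phi\in \S(\R^2)$, together with convergence of the joint finite-dimensional distributions $(X^\delta_{t_1}(\phi_1),\ldots,X^\delta_{t_n}(\phi_n))$. Because $\xi^{\infty,\delta}$ is Gaussian, each $X^\delta_t(\phi)$ is Gaussian, so the finite-dimensional convergence reduces to pointwise convergence of means and covariances, and tightness of the marginals reduces via Kolmogorov's criterion to a uniform fourth-moment estimate $\E[|X^\delta_t(\phi) - X^\delta_s(\phi)|^4] \leq C|t-s|^{1+\vep}$, which by the $3\sigma^4$ identity is controlled by the corresponding $L^2$-increment. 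I would decompose $X^\delta_t(\phi) = \eta^\delta_t(\phi) + \zeta^\delta_t(\phi)$ into mean and centered parts and handle them separately.

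For the centered part $\zeta^\delta$, I would use the stochastic integral representation (\ref{M:SI})--(\ref{Phi:intro}) obtained from the $L^2$-isometry between the reproducing-kernel Hilbert space of the stochastic part of $\xi^{\infty,\delta}$ and $L^2(\R_+\times\delta^{-1/2}\T^2)$, realized via independent space-time white noises $W^1,W^2$ with kernel $\Phi^\delta_t$. Tightness then reduces to a uniform $L^2$-bound
\[
\sup_{\delta\in(0,1)}\int_0^{s\wedge t}\!\!\int_{\delta^{-1/2}\T^2}\big|\Phi^\delta_t(r,k) - \Phi^\delta_s(r,k)\big|^2\,\d r\,\d k \;+\; (\mbox{boundary term})\; \leq\; C|t-s|^{1+\vep}.
\]
The Taylor expansion (\ref{A:Taylorintro}) yields pointwise convergence $\Phi^\delta_t(r,k)\to\Phi^0_t(r,k)$ as in (\ref{def:Phi0-intro}), and the same exponential decay from Assumption~\ref{ass}~(5) that underlies the tightness bound also supplies a dominating function, so dominated convergence upgrades this to $L^2$-convergence of the kernels and hence $L^2(\P)$-convergence of the Wiener integrals to the limit Wiener integral with kernel $\Phi^0_t$. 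Inverting Fourier and changing variables $y=V^{-1}z$ identifies this limit with $\sqrt{v|\det V|}\int_0^t\!\!\int p_{t-r}(y-w)\,W(\d r,\d w)$, the mild-solution form of the noise term in (\ref{SHE}).

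For the mean part $\eta^\delta_t(\phi)$, the Fourier representation (\ref{def:AinX}) combined with hypothesis (\ref{ass:IC}) should yield $\eta^\delta_t(\phi)\to \langle e^{t\Delta/2}|\det V|\mu^0,\phi\rangle$ by a real-analysis computation (the content of Section~\ref{sec:drift}); pathwise uniqueness for the linear additive SPDE (\ref{SHE}) is automatic because its solution is the unique Gaussian process with the prescribed mean and covariance, so the two limits together identify $X^0$. The principal obstacle throughout is the $L^2$-control of the kernel increment $\Phi^\delta_t - \Phi^\delta_s$ in the presence of the floor function: the discretization inside the phase $e^{\i\langle \delta^{1/2}k,\lf \delta^{-1}Ut + \delta^{-1/2}V^{-1}z\rf\rangle - \i\langle \delta^{1/2}k,\delta^{-1}Ut\rangle}$ blocks the exact cancellation between the translation $\delta^{-1}Ut$ in the spatial mesh and the imaginary drift $-\i\langle k,U\rangle$ appearing inside $\A(\delta^{1/2}k)$, so $\Phi^\delta_t$ is not a clean convolution kernel against $\phi$. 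Writing $\lf x\rf = x - \{x\}$ isolates the obstruction as a fractional-part oscillation in $k$, and the strategy will be to split the $k$-integral into a low-mode region, where the Taylor expansion of $\A$ governs the exponential decay and the Schwartz regularity of $\phi$ absorbs the oscillation, and a high-mode region where Assumption~\ref{ass}~(5) gives summability uniformly in $\delta$.
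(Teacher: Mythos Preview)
Your overall architecture---Mitoma's theorem, the mean/centered decomposition, the stochastic integral representation via $\Phi^\delta_t$, and the identification of the limit by Fourier inversion---matches the paper's. The gap is in the tightness step for the centered part.

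You propose to obtain a uniform Kolmogorov bound $\E[|Z^\delta_t(\phi)-Z^\delta_s(\phi)|^2]\le C|t-s|^{(1+\vep)/2}$ by controlling $\int_0^s\int|\Phi^\delta_t-\Phi^\delta_s|^2$. This fails. Write $\Phi^\delta_t(r,k)=e_t(r,k)\,g^\delta_t(k)$ with $g^\delta_t(k)=\tfrac{1}{2\pi}\int\phi_V(z)\exp\{\i\langle\delta^{1/2}k,\lfloor\delta^{-1}Ut+\delta^{-1/2}z\rfloor-\delta^{-1}Ut\rangle\}\,\d z$. The exponential factor $e_t$ behaves well in $t$, but $g^\delta_t$ does not: using the cell decomposition (the paper's representation of $\varphi^\delta_t$ as a sum over $x\in\Z^2$ of integrals over moving boxes $[\delta^{1/2}x-\delta^{-1/2}Ut,\,\delta^{1/2}x-\delta^{-1/2}Ut+\delta^{1/2})$), one sees that $\partial_t g^\delta_t(k)$ carries a factor $\delta^{-1/2}$, since both the phase $e^{\i\langle\delta^{1/2}k,x-\delta^{-1}Ut\rangle}$ and the box endpoints move at speed $\delta^{-1/2}|U|$. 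Equivalently, your fractional-part observation gives $|g^\delta_t(k)-g^\delta_s(k)|\lesssim\min\{\delta^{1/2}|k|,\,\delta^{-1/2}|t-s|\}\cdot(1+|k|)^{-n}$; neither branch yields a bound that is simultaneously uniform in $\delta$ and H\"older in $|t-s|$. Plugging the $\delta^{-1/2}|t-s|$ bound into the $L^2$-increment gives a constant that blows up like $\delta^{-1}$, and the $\delta^{1/2}|k|$ bound gives no $|t-s|$ decay at all. Your proposed low-mode/high-mode split in $k$ addresses decay in $k$ but does nothing for this $t$-irregularity.

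The paper's remedy is to abandon the increment estimate for this piece. It writes $Z^\delta=Z^{\delta,c}+D^\delta$, where $Z^{\delta,c}$ replaces the floor by the identity (so its kernel $\mathcal F\phi_V(k)$ is $t$-independent and Kolmogorov applies trivially), and then shows $\sup_{t\le T}|D^\delta_t(\phi)|\to 0$ in $L^1(\P)$ directly. For the part of $D^\delta$ that multiplies the Taylor remainder $\A(\delta^{1/2}k)+\i\langle\delta^{1/2}k,U\rangle-\tfrac12 Q(\delta^{1/2}k)$, the extra $\delta^{1/2}|k|^3$ cancels the $\delta^{-1/2}$ from $\partial_t g^\delta_t$, so a Kolmogorov bound does go through there. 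For the remaining pieces ($D^{\delta,1}$ and $D^{\delta,2}$ in the paper's notation), no such cancellation exists, and the paper instead uses the da~Prato--Zabczyk factorization $J^{a-1}J^{-a}=\mathrm{Id}$ together with the Chapman--Kolmogorov identity for the heat kernel: one writes the stochastic convolution as $J^{a-1}$ applied to a random field $J^{-a}\varphi^{\delta}_t(s,w)$, bounds $\sup_t|J^{a-1}v_t(t)|$ by an $L^{q}$-norm of $\sup_t|v_t|$, and shows the latter tends to zero. This bypasses any modulus-of-continuity estimate and is the step your proposal is missing.
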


In the case that $\mu^\delta(x)\equiv \psi(\delta^{1/2}x)$ for some $\psi\in \S(\R^2)$, the assumed convergence in (\ref{ass:IC}) holds and we have
\begin{align*}
\sum_{y\in \delta^{1/2}V\Z^2}\delta\mu^\delta(\delta^{-1/2}V^{-1}y)\phi(y)
=&\sum_{y\in \delta^{1/2}V\Z^2}\delta\psi(V^{-1}y)\phi(y)\\
&\xrightarrow[\delta\to 0+]{} \frac{1}{|\det(V)|}\int_{\R^2}\psi(V^{-1}y)\phi(y)\d y.
\end{align*}

For the proof of Theorem~\ref{thm:main}, we decompose the Gaussian process $\xi^{\infty,\delta}$ according to its deterministic part and stochastic part as before in Section~\ref{sec:FT}: 
\begin{align}\label{dec:Xinfty}
\xi^{\infty,\delta}_t(x)=\eta^{\infty,\delta}_t(x)+\zeta^{\infty,\delta}_t(x).
\end{align}
That is, $\eta^{\infty,\delta}_t(x)$ is the mean function of $\xi^{\infty,\delta}_t(x)$ in (\ref{def:AinX}) and $\zeta^{\infty,\delta}$ is a centered Gaussian process with a covariance function given by (\ref{def:MinX}). The analogous decomposition of $X^\delta(\phi)$ is defined by:
\[
X^\delta_t(\phi)=Y^\delta_t(\phi)+Z^{\delta}_t(\phi),\quad \phi\in \S(\R^2),
\]
where 
\begin{align}
Y^\delta_t(\phi)&\;\defeq \int_{\R^2}\d z\eta^{\infty,\delta}_{\delta^{-1}t}(\lfloor \delta^{-1}Ut +\delta^{-1/2}V^{-1}z\rfloor)\phi(z) ,\label{def:Adelta}\\
Z^\delta_t(\phi)&\;\defeq \int_{\R^2}\d z\zeta^{\infty,\delta}_{\delta^{-1}t}(\lfloor \delta^{-1} Ut+\delta^{-1/2}V^{-1}z\rfloor)\phi(z) .\label{def:Mdelta}
\end{align}
We also define a counterpart of $Z^\delta$ where the floor function is removed: 
\begin{align}
Z^{\delta,c}_t(\phi)\;\defeq &\int_{\R^2}\d z\zeta^{\infty,\delta}_{\delta^{-1}t}(\delta^{-1}Ut+\delta^{-1/2} V^{-1}z) \phi(z)  .\label{def:Mdeltac}
\end{align}
\smallskip

\noindent{\bf Organization of the proof of Theorem~\ref{thm:main}.}
We study the convergence of $Z^\delta$ in Section~\ref{sec:noise}   and the convergence of $Y^\delta$  in Section~\ref{sec:drift}.
The main result of Section~\ref{sec:noise} (Proposition~\ref{prop:Mlimit}) shows that the family of laws $\{Z^\delta\}_{\delta\in (0,1)}$ is tight as probability measures on $C(\R_+,\mathcal S'(\R^2))$. Moreover, its distributional limit as $\delta\to 0+$ is unique and is given by the law of a $C(\R_+,\mathcal S'(\R^2))$-valued random element $Z^0$ which satisfies the following equation. For some space-time white noise $W(\d r,\d k)$ with covariance measure $\d r\d k$ on $\R_+\times \R^2$, 
\begin{align}\label{Meq}
\begin{split}
Z^0_t(\phi)=&\int_0^t Z^0_s\left(\frac{\Delta \phi}{2}\right)\d s+\sqrt{v|\det(V)|}\int_0^t\int_{\R}\phi(k)W(\d r,\d k).
\end{split}
\end{align}
Then the main result of Section~\ref{sec:drift} (Proposition~\ref{prop:Alimit}) shows that 
$Y^\delta$ converges to the solution $Y^0$ of a heat equation in $C(\R_+,\mathcal S'(\R^2))$ as $\delta\to 0+$: 
\begin{align}\label{Aeq}
Y_t^0(\phi)=|\det(V)|\mu^0(\phi)+\int_0^t Y_s^0\left(\frac{\Delta\phi}{2}\right)\d s.
\end{align}
In summary, writing $\xrightarrow[\delta\to 0+]{{\rm (d)}}$ for convergence in distribution as $\delta\to 0+$, we obtain from (\ref{Meq}) and (\ref{Aeq}) that
\[
X^\delta=Y^\delta+Z^\delta\xrightarrow[\delta\to 0+]{{\rm (d)}}
Y^0+Z^0=X^0
\]
and $X^0$ solves the additive stochastic heat equation defined  in (\ref{SHE}). \hfill $\blacksquare$

\section{Convergence of the stochastic parts}\label{sec:noise}
This section is devoted to the proof of weak convergence of the stochastic parts $Z^\delta$ defined in (\ref{def:Mdelta}) as $\delta\to 0+$. We will verify Mitoma's conditions for weak convergence in the space of probability measures on $C(\R_+,\S'(\R^2))$ (cf. \cite[Theorem~3.1]{Mitoma}) and characterize all the subsequential limits. 
For the present setup,  the first of Mitoma's conditions requires that $Z^\delta$ is $C(\R_+,\S'(\R^2))$-valued for every $\delta\in (0,1)$.  This is satisfied by the following proposition.

\begin{prop}\label{prop:Mcontinuous}\sl
The stochastic part $\zeta^\infty$ of the Gaussian process $\xi^\infty$ in Proposition~\ref{prop:Xinfty} continuously extended to $\R_+\times \R^2$ satisfies the following growth bounds: 
\begin{align}\label{Minfty:bdd}
\E\left[\sup_{x\in \Z^2}\frac{1}{1+\|x\|^{2r}_\infty}\sup_{t\in [0,T]}\sup_{y\in x+[0,1)^2}\big|\zeta^\infty_{t}(y)\big|^{2r}\right]<\infty,\quad \forall\; r\in (1,\infty).
\end{align} 
Hence,  for every $\delta\in (0,1)$, $Z^\delta$ and $Z^{\delta,c}$ take values in $C(\R_+,\S'(\R^2))$ almost surely.
\end{prop}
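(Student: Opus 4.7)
My plan has three steps. First, I would exploit spatial stationarity. The covariance formula \eqref{def:MinX} depends on the spatial variables only through the Fourier factor $e^{\i\langle k,x-y\rangle}$, so $\zeta^\infty$ is a centered Gaussian field that is translation-invariant in its spatial variable. Consequently, for each $x\in\Z^2$, the local supremum
\[
M_x \defeq \sup_{t\in[0,T]}\sup_{y\in x+[0,1]^2}|\zeta^\infty_t(y)|
\]
is equal in distribution to $M_0$. Joint continuity from Proposition~\ref{prop:Xinfty} ensures that $M_0$ is a.s. finite and is the supremum norm of a centered Gaussian element of the separable Banach space $C([0,T]\times[0,1]^2)$, so Fernique's theorem (equivalently, Borell--TIS) yields $\E[M_0^{2r}]<\infty$ for every $r>0$.

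Second, I would derive \eqref{Minfty:bdd} by the crude union bound $\sup_{x}a_x\leq\sum_x a_x$ for $a_x\geq 0$, combined with Tonelli and spatial stationarity:
\[
\E\Bigl[\sup_{x\in\Z^2}\frac{M_x^{2r}}{1+\|x\|_\infty^{2r}}\Bigr]
\;\leq\;\E[M_0^{2r}]\sum_{x\in\Z^2}\frac{1}{1+\|x\|_\infty^{2r}},
\]
which is finite precisely because $2r>2$ makes the lattice sum converge (this is where the hypothesis $r>1$ is used). Since the supremum over the half-open box $x+[0,1)^2$ appearing in \eqref{Minfty:bdd} is bounded by $M_x$, this yields the claimed moment bound.

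Third, for the assertion that $Z^\delta$ and $Z^{\delta,c}$ take values in $C(\R_+,\S'(\R^2))$, the a.s. version of \eqref{Minfty:bdd} supplies a random $C(\omega)<\infty$ and an exponent $r$ with $|\zeta^{\infty,\delta}_s(y)|\leq C(1+\|y\|_\infty^{r})$ uniformly for $s$ in any bounded time interval and $y\in\R^2$ (extending from $\Z^2$ to $\R^2$ via the lattice cell containing $y$). This polynomial bound is dominated by the Schwartz decay of any $\phi\in\S(\R^2)$ after the natural substitution $y=\delta^{-1}Ut+\delta^{-1/2}V^{-1}z$, so the integrals defining $Z^\delta_t(\phi)$ and $Z^{\delta,c}_t(\phi)$ in \eqref{def:Mdelta} and \eqref{def:Mdeltac} converge absolutely and define tempered distributions for every $t$. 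Continuity of $t\mapsto Z^{\delta,c}_t(\phi)$ then follows from the joint continuity of $\zeta^\infty$ together with dominated convergence against a Schwartz-integrable majorant independent of $t$ on compact $t$-intervals; for $Z^\delta_t(\phi)$, the floor function produces discontinuities in $t$ only on a $z$-null set for each fixed $t$, so dominated convergence still delivers path continuity.

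\textbf{Main obstacle.} The only non-routine step is the Fernique/Borell--TIS application, which is nevertheless immediate once joint continuity is used to realize $\zeta^\infty\rest_{[0,T]\times[0,1]^2}$ as a centered Gaussian element of a separable Banach space. Everything else reduces to a summable lattice weight together with dominated convergence modulated by the Schwartz decay of test functions.
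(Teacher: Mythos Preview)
Your proposal is correct and follows essentially the same route as the paper: reduce to a single unit cell via spatial translation invariance of the covariance \eqref{def:MinX}, bound the supremum over $\Z^2$ by a weighted lattice sum that converges when $2r>2$, and then cite polynomial growth to conclude $Z^\delta,Z^{\delta,c}\in C(\R_+,\S'(\R^2))$. The only cosmetic differences are that the paper organizes the lattice sum dyadically via level sets $E_n=\{2^{n-1}\le\|x\|_\infty<2^n\}$ rather than summing directly, and obtains $\E[M_0^{2r}]<\infty$ from the quantitative increment bound \eqref{Minfty-norm} via Kolmogorov's criterion rather than invoking Fernique/Borell--TIS; both choices are equivalent here.
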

\begin{proof}[\bf Proof]
We partition $\Z^2\setminus\{0\}$ according to the level sets
$E_n=\{x\in \Z^2;2^{n-1}\leq \|x\|_\infty< 2^n\}$ for $n\geq 1$. 
Since $\{x\in \mathbb  Z^2;\|x\|_\infty=n\}
=8n$,
we have
$|E_n|=\sum_{j=2^{n-1}}^{2^n-1}8j
\leq 3\cdot 2^{2n}$. 
It follows that 
\begin{align}
&\;\E\left[\sup_{x\in \Z^2}\frac{1}{1+\|x\|_\infty^{2r}}\sup_{t\in [0,T]}\sup_{y\in x+[0,1)^2}\big|\zeta^\infty_{t}(y)\big|^{2r}\right]\notag\\
\leq &\;\E\left[\sup_{t\in [0,T]}\sup_{y\in [0,1)^2}|\zeta^\infty_t(y)|^{2r}\right]+
 \sum_{n=1}^\infty \E\left[\sup_{x\in E_n}\frac{1}{1+\|x\|_\infty^{2r}}\sup_{t\in [0,T]}\sup_{y\in x+[0,1)^2}|\zeta^\infty_t(y)|^{2r}\right]\notag\\ 
  \leq &\;\left(1+\sum_{n=1}^\infty   \frac{3\cdot 2^{2n}}{1+2^{2r(n-1)}}\right)\E\left[\sup_{t\in [0,T]}\sup_{y\in [0,1)^2}|\zeta^\infty_t(y)|^{2r}\right].\label{Munif}
\end{align}
Here, in the second inequality, we use the spatial translation invariance of $\zeta^\infty$
by the analogous property  of the covariance function in (\ref{def:MinX}). By (\ref{Minfty-norm}), the Gaussian property of $\zeta^\infty$ and Kolmogorov's criterion for continuity \cite[Theorem~I.2.1]{RY}, we deduce that the expectation of $\zeta^\infty$ in (\ref{Munif}) is finite. Then (\ref{Minfty:bdd}) follows.

The required properties of $Z^\delta$ and $Z^{\delta,c}$ follow from the almost surely polynomial growth of $\zeta^\infty$ implied by (\ref{Minfty:bdd}) (see \cite[Example~4 on page~136]{RS1}). 
\end{proof}

The other condition of Mitoma requires that the  laws of $\{Z^\delta(\phi)\}_{\delta\in (0,1)}$ is tight in the space of probability measures on $C(\R_+,\R)$
 for any $\phi\in \S(\R^2)$.
The proof is  carried out in  Sections~\ref{sec:IBP}--\ref{sec:Mlim}. Before proving  the stochastic integral representations of $Z^\delta(\phi)$'s in (\ref{M:SI}) for this purpose, we derive in Section~\ref{sec:IBP} a semi-discrete integration by parts for  functions taking the following form:
\[
k\mapsto  \int_{\R^2}\d  z\phi(z)  e ^{\i\langle \delta^{1/2}k, \lf\delta^{-1}Ut+\delta^{-1/2}V^{-1}z\rf\rangle-\i\langle \delta^{1/2}k,\delta^{-1}Ut\rangle} : \delta^{-1/2}\T^2\to  \mathbb  C
\] 
(recall the integrands in (\ref{Phi:intro})).
The semi-discrete integration by parts has an obvious analogue for the integration by parts of the usual Fourier transform $\int \d z\phi(z)e^{\i \langle k,V^{-1}z\rangle}$. It will handle the discontinuity  of the floor function $\lf \cdot\rf$ in cancelling the two large factors  $\delta^{-1}Ut$.  Then in Section~\ref{sec:SI}, we prove a slightly more detailed form of  (\ref{M:SI})
by representing
\begin{align}\label{def:DdeltaZ}
D^\delta(\phi)\;\defeq \;Z^\delta(\phi)-Z^{\delta,c}(\phi)\quad \mbox{and}\quad Z^{\delta,c}(\phi)
\end{align}
as a vector of stochastic integrals with respect to space-time white noises. 
The convergence of $D^\delta(\phi)$  to zero in probability uniformly on compacts and the convergence of $Z^{\delta,c}$ to the space-time stochastic integral in (\ref{M:SI}) with $\Phi^\delta_t$ replaced by $\Phi^0$ (\ref{def:Phi0-intro}) occupy Sections~\ref{sec:DM1} and~\ref{sec:DM2}. The characterization of the limit of $Z^\delta$ is given in Section~\ref{sec:Mlim}.

\subsection{Semi-discrete integration by parts}\label{sec:IBP}
We write
\begin{align}\label{def:Sdelta}
\mathbb  S_\delta (k)\;\defeq \;\frac{e^{\i \delta^{1/2}k/2}\big(e^{\i \delta^{1/2}k/2}-e^{-\i \delta^{1/2}k/2}\big)}{\i\delta^{1/2}},\quad k\in \delta^{-1/2}\T,\;\delta\in (0,1).
\end{align}
This sine-like function $\mathbb  S_\delta$ will be used repeatedly  in the rest of Section~\ref{sec:noise}, along with the  following two properties:
\begin{align}\label{prop:Sdelta}
\left\{
\begin{array}{ll}
\displaystyle \frac{2}{\pi}|k|\leq |\mathbb  S_\delta (k)|\leq |k|,\quad \forall\; k\in \delta^{-1/2}\mathbb  T;\\
\vspace{-.2cm}\\
\displaystyle \lim_{\delta\to 0+}\mathbb  S_\delta(k)=k,\quad \forall\; k\in \R.
\end{array}
\right.
\end{align}
Note that the first property in (\ref{prop:Sdelta}) follows from Jordan's inequality.

\begin{prop}\label{prop:IBP1}
\sl For any $f\in \ell_1(\Z)$, $n\in \mathbb  Z_+$, $\delta\in (0,1)$ and
$k_1\in \delta^{-1/2}\mathbb  T\setminus \{0\}$, we have
\begin{align}\label{dIBP}
\sum_{x_1\in \Z}e^{\i \delta^{1/2}k_1x_1}f(x_1)=\frac{(-1)^n}{\big(\i \mathbb  S_\delta(k_1)\big)^n}\sum_{x_1\in \Z} e^{\i \delta^{1/2}k_1x_1}\Delta^n_\delta f(x_1),
\end{align}
where $\mathbb  S_\delta$ is defined in (\ref{def:Sdelta}) and $\Delta_\delta$ is the ordinary difference operator defined by 
\begin{align}\label{def:Delta}
\Delta_\delta f(x_1)=\frac{f(x_1)-f(x_1-1)}{\delta^{1/2}}.
\end{align}
\end{prop}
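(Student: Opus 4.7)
The plan is to prove the identity by induction on $n$, reducing to a single step of summation by parts. The case $n=0$ is tautological once we interpret $\mathbb S_\delta(k_1)^0 = 1$ and $\Delta_\delta^0 f = f$, and $f \in \ell_1(\Z)$ guarantees absolute convergence of every sum that appears.

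For the base case $n=1$, I would start from the right-hand side and work backwards. Expanding the difference operator,
\begin{align*}
\sum_{x_1\in\Z} e^{\i\delta^{1/2}k_1 x_1}\Delta_\delta f(x_1)
&=\delta^{-1/2}\sum_{x_1\in\Z}e^{\i\delta^{1/2}k_1 x_1}\bigl(f(x_1)-f(x_1-1)\bigr),
\end{align*}
and then shifting $x_1\mapsto x_1+1$ in the second piece (permitted since $f\in\ell_1(\Z)$), the sum collapses to
\[
\delta^{-1/2}\bigl(1-e^{\i\delta^{1/2}k_1}\bigr)\sum_{x_1\in\Z}e^{\i\delta^{1/2}k_1 x_1}f(x_1).
\]
The crux is then the algebraic identity
\[
\delta^{-1/2}\bigl(1-e^{\i\delta^{1/2}k_1}\bigr)=-\delta^{-1/2}e^{\i\delta^{1/2}k_1/2}\bigl(e^{\i\delta^{1/2}k_1/2}-e^{-\i\delta^{1/2}k_1/2}\bigr)=-\i\,\mathbb S_\delta(k_1),
\]
obtained by symmetrizing $1-e^{\i\delta^{1/2}k_1}$ around $\delta^{1/2}k_1/2$ and recognizing the definition (\ref{def:Sdelta}). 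Dividing by $-\i\,\mathbb S_\delta(k_1)$, which is nonzero for $k_1\in\delta^{-1/2}\T\setminus\{0\}$ by the lower bound in (\ref{prop:Sdelta}), delivers (\ref{dIBP}) at $n=1$.

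For the inductive step, observe that $f\in\ell_1(\Z)$ implies $\Delta_\delta f\in\ell_1(\Z)$ via the triangle inequality, so the $n=1$ identity may be applied iteratively to $\Delta_\delta^{n-1}f$. Combining this with the inductive hypothesis for $n-1$ produces the prefactor $(-1)^n/(\i\mathbb S_\delta(k_1))^n$ and the $n$-th discrete difference on the right-hand side.

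The only real obstacle is bookkeeping: correctly pairing the shift in the sum with the complex-exponential symmetrization to land on exactly $\mathbb S_\delta(k_1)$ rather than a cousin of it, and confirming that the hypothesis $k_1\neq 0$ is exactly what is needed to divide. No new ideas beyond discrete integration by parts are required.
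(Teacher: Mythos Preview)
Your proof is correct and follows essentially the same approach as the paper: reduce to $n=1$ by iteration, then establish the single-step identity via discrete integration by parts. The only cosmetic difference is that the paper carries out the $n=1$ case by Abel summation on truncated sums $\sum_{-N}^{N}$ and passes to the limit, whereas your direct index shift $x_1\mapsto x_1+1$ (justified by $f\in\ell_1(\Z)$) is slightly cleaner and avoids the truncation altogether.
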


\begin{proof}
It suffices to prove (\ref{dIBP}) for $n=1$, and then the case of general $n$ follows from iteration. By summation by parts, 
we can write
\begin{align*}
&\sum_{x_1\in \Z}e^{\i \delta^{1/2}k_1x_1}f(x_1)\\
=&\lim_{N\to\infty} \sum_{x_1=-N}^Ne^{\i \delta^{1/2} k_1x_1}f(N)-
\sum_{x_1=-N}^{N-1}\sum_{m=-N}^{x_1}e^{\i \delta^{1/2} k_1m}[f(x_1+1)-f(x_1)].
\end{align*}
Since $k_1\in \delta^{-1/2}\T\setminus\{0\}$, we have 
\[
\sum_{m=-N}^{x_1}e^{\i \delta^{1/2}k_1m}
=\frac{e^{\i \delta^{1/2}k_1(x_1+1)}-e^{-\i\delta^{1/2} k_1N}}{e^{\i\delta^{1/2} k_1}-1}. 
\]
 Then by telescoping and  the assumption that $f\in \ell_1(\Z^2)$, we get from the last two equalities that
\begin{align*}
&\;\sum_{x_1\in \Z}e^{\i \delta^{1/2}k_1x_1}f(x_1)\\
=&-\sum_{x_1=-\infty}^\infty \left(\frac{e^{\i \delta^{1/2}k_1(x_1+1)}}{e^{\i \delta^{1/2}k_1}-1}\right)[f(x_1+1)-f(x_1)]\\
=&\;\frac{-1}{e^{\i \delta^{1/2}k_1/2}(e^{\i \delta^{1/2}k_1/2}-e^{-\i \delta^{1/2}k_1/2})\delta^{-1/2}}\sum_{x_1=-\infty}^\infty e^{\i \delta^{1/2}k_1x_1}\frac{f(x_1)-f(x_1-1)}{\delta^{1/2}}.
\end{align*}
Applying the notations $\mathbb  S_\delta$ and $\Delta_\delta$ to the last equality proves (\ref{dIBP}) for $n=1$. This completes the proof.
\end{proof}

To state the next result, we introduce few more notations. First,
$\lfloor z_j\rfloor_{\delta,t,j}$ denotes the nearest point in $\delta^{1/2}\Z-\delta^{-1/2 }U_jt$ to the left of $z_j\in \R$ and
\begin{align}\label{def:floor}
\lfloor z\rfloor_{\delta,t}\;\defeq \;(\lfloor z_1\rfloor_{\delta,t,1},\lfloor z_2\rfloor_{\delta,t,2}),\quad z=(z_1,z_2)\in \R^2.
\end{align}
With a slight abuse of notation, we also write $\lfloor z\rfloor_{\delta,t,j}$ for $\lfloor z_j\rfloor_{\delta,t,j}$.
Then the following inequalities hold:
\begin{align}\label{floorbound}
0\leq z_j-\lf z_j\rf_{\delta,j,t}<\delta^{1/2},\quad\forall\; z_j\in \R,\;\delta\in (0,1),\;j\in \{1,2\},\;t\in \R_+.
\end{align} 
Also, we define a partial difference operator $\Delta_{\delta,1}$ by
\begin{align}\label{def:Delta1}
\Delta_{\delta,1}\phi(z)\;\defeq \;\frac{\phi(z_1,z_2)-\phi(z_1-\delta^{1/2},z_2)}{\delta^{1/2}}.
\end{align}
The operator $\Delta_{\delta,2}$ is similarly defined. In contrast to $\Delta_\delta$ defined in (\ref{def:Delta}), a scaling of space by $\delta^{1/2}$ is now in the definitions of $\Delta_{\delta,j}$'s.

\begin{prop}\label{prop:IBP2}\sl 
Let $\delta\in (0,1)$, $\phi\in \S(\R^2)$ and $j\in \{1,2\}$. 
Then for all $n\in \mathbb  Z_+$,  multi-indices $\alpha\in \mathbb  Z_+^2$ and $k\in \delta^{-1/2}\T^2$ with $k_j\neq 0$ when $n>0$, it holds that
\begin{align}
\begin{split}
&\;\frac{\partial^\alpha}{\partial k^\alpha}\int_{\R^2}\d ze^{\i\langle \delta^{1/2} k,\lfloor \delta^{-1}Ut+ \delta^{-1/2}z\rfloor \rangle-\i\langle \delta^{1/2} k,\delta^{-1}Ut\rangle}\phi(z)\\
=&\;\frac{(-1)^n\i^{|\alpha|}}{\big(\i\mathbb  S_\delta(k_j)\big)^n} \int_{\R^2}\d ze^{\i\langle k,\lfloor z\rfloor_{\delta,t}\rangle} \Delta_{\delta,j}^n\big(\lfloor \cdot \rfloor_{\delta,t}^\alpha
\phi\big)(z) ,\label{dIBP2}
\end{split}
\end{align}
where $|\alpha|=\alpha_1+\alpha_2$ and $z^\alpha=z_1^{\alpha_1}z_2^{\alpha_2}$ for all $z\in \R^2$. 
\end{prop}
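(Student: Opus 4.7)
The plan is threefold: simplify the exponent on the left-hand side, differentiate under the integral sign, and then apply Proposition~\ref{prop:IBP1} to produce the $n$ difference operators. For the first step, note that for each coordinate $j$, by construction $\delta^{1/2}\lf \delta^{-1}U_j t + \delta^{-1/2}z_j\rf$ is the largest element of $\delta^{1/2}\Z$ not exceeding $\delta^{-1/2}U_j t + z_j$, so the definition (\ref{def:floor}) of $\lf\cdot\rf_{\delta,t,j}$ gives $\delta^{1/2}\lf \delta^{-1}U_j t + \delta^{-1/2}z_j\rf = \lf z_j\rf_{\delta,t,j} + \delta^{-1/2}U_j t$. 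Pairing against $\delta^{1/2}k$ makes the two exponential factors on the left-hand side of (\ref{dIBP2}) collapse into a single $e^{\i\langle k,\lf z\rf_{\delta,t}\rangle}$. The derivative $\partial^\alpha/\partial k^\alpha$ can then be brought inside the integral by dominated convergence (the Schwartz decay of $\phi$ dominates any polynomial in $\lf z\rf_{\delta,t}$), producing the factor $\i^{|\alpha|}\lf z\rf_{\delta,t}^\alpha$ in the integrand and settling the $n=0$ case.

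For $n\geq 1$, take $j=1$ without loss of generality. Since $z_1\mapsto \lf z_1\rf_{\delta,t,1}$ is constant on each half-open interval $I_{n_1} := [n_1\delta^{1/2}-\delta^{-1/2}U_1 t,\,(n_1+1)\delta^{1/2}-\delta^{-1/2}U_1 t)$, applying Fubini to pull the $z_1$-integral inside and decomposing it into a sum over $n_1\in \Z$ turns the $z_1$-part into $e^{-\i\delta^{-1/2}k_1 U_1 t}\sum_{n_1\in\Z}e^{\i\delta^{1/2}k_1 n_1}f(n_1;z_2)$, where $f(n_1;z_2) := \int_{I_{n_1}}(\lf\cdot\rf_{\delta,t}^\alpha\phi)(z_1,z_2)\,\d z_1$. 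The Schwartz decay of $\phi$ ensures $f(\cdot;z_2)\in\ell_1(\Z)$, so Proposition~\ref{prop:IBP1} applies and yields the prefactor $(-1)^n/(\i\mathbb{S}_\delta(k_1))^n$ in front of $\sum_{n_1}e^{\i\delta^{1/2}k_1 n_1}\Delta_\delta^n f(n_1;z_2)$. A direct substitution $z_1\mapsto z_1-\delta^{1/2}$ mapping $I_{n_1-1}$ onto $I_{n_1}$ shows $\Delta_\delta f(n_1;z_2) = \int_{I_{n_1}}\Delta_{\delta,1}(\lf\cdot\rf_{\delta,t}^\alpha\phi)(z)\,\d z_1$; iterating this identity replaces $f$ by the analogous integral of $\Delta_{\delta,1}^n(\lf\cdot\rf_{\delta,t}^\alpha\phi)$. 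Reassembling the sum back into an integral over $z_1$ and integrating over $z_2$ then matches the right-hand side of (\ref{dIBP2}).

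The main obstacle is purely bookkeeping: verifying that each sum/integral/derivative interchange is valid uniformly in $k\in \delta^{-1/2}\T^2$. This is immediate from the Schwartz decay of $\phi$, which dominates the polynomial coefficients $\lf z\rf_{\delta,t}^\alpha$ and is preserved under the finite differences $\Delta_{\delta,j}$ (these introduce no unbounded factors since $\lf z_j\rf_{\delta,t,j}-\lf z_j-\delta^{1/2}\rf_{\delta,t,j}=\delta^{1/2}$). Correspondingly, no substantive analytic difficulty arises, and the identity reduces to careful tracking of the semi-discrete structure.
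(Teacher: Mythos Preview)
Your proof is correct and follows essentially the same approach as the paper: differentiate under the integral, decompose the $z_1$-integral over the intervals where $\lfloor z_1\rfloor_{\delta,t,1}$ is constant, apply Proposition~\ref{prop:IBP1}, and reassemble. Your upfront observation that $\delta^{1/2}\lfloor \delta^{-1}Ut+\delta^{-1/2}z\rfloor-\delta^{-1/2}Ut=\lfloor z\rfloor_{\delta,t}$ collapses the exponent to $e^{\i\langle k,\lfloor z\rfloor_{\delta,t}\rangle}$ before differentiating, which slightly streamlines the bookkeeping compared to the paper's version (the paper carries the factor $\delta^{|\alpha|/2}(\lfloor \delta^{-1}Ut+\delta^{-1/2}z\rfloor-\delta^{-1}Ut)^\alpha$ and introduces an auxiliary $\Phi_\delta(x_1)$ with an explicit induction), but the substance is identical.
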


\begin{proof}[\bf Proof]
The integral on the left-hand side of (\ref{dIBP2}) can be written as
\begin{align}
\label{dIBP-111}
\begin{split}
&\;\frac{\partial^\alpha}{\partial k^\alpha}\int_{\R^2}\d z e^{\i\langle \delta^{1/2}k,\lfloor \delta^{-1}Ut+ \delta^{-1/2}z\rfloor \rangle-\i\langle \delta^{1/2}k,\delta^{-1}Ut\rangle }\phi(z) \\
=&\;\i^{|\alpha|}\delta^{|\alpha|/2}\int_{\R^2}\d z e^{\i\langle \delta^{1/2} k,\lfloor \delta^{-1}Ut+ \delta^{-1/2}z\rfloor \rangle-\i\langle \delta^{1/2} k,\delta^{-1}Ut\rangle }\\
&\times\big(\lf \delta^{-1}Ut+\delta^{-1/2}z\rf-\delta^{-1}Ut\big)^\alpha \phi(z).
\end{split}
\end{align}
Below we prove the required formula (\ref{dIBP2}) for $j=1$ by (\ref{dIBP-111}).

Now, we partition $\mathbb  R^2$ by the semi-closed squares $I^\delta_{\delta^{1/2}x-\delta^{-1/2}Ut}$ for $x$ ranging over $\Z^2$, where
\begin{align}\label{def:Idelta}
I^\delta_y=[y,y+\delta^{1/2})
\;\defeq\; [y_1,y_1+\delta^{1/2})\times [y_2,y_2+\delta^{1/2}),\quad y\in \R^2.
\end{align}
These squares $I^\delta_{\delta^{1/2}x-\delta^{-1/2}Ut}$ are chosen such that 
\[
\lfloor \delta^{-1}Ut+ \delta^{-1/2}z\rfloor =x,\quad \forall\; z\in I^\delta_{\delta^{1/2}x-\delta^{-1/2}Ut},\;x\in \Z^2.
\] 
Then by the foregoing display, the right-hand side of (\ref{dIBP-111}) can be written as
\begin{align}
&\;\i^{|\alpha|}\delta^{|\alpha|/2}\int_{\R^2}\d z e^{\i\langle \delta^{1/2} k,\lfloor \delta^{-1}Ut+ \delta^{-1/2}z\rfloor \rangle-\i\langle \delta^{1/2}k,\delta^{-1}Ut\rangle } \big(\lf \delta^{-1}Ut+\delta^{-1/2}z\rf-\delta^{-1}Ut\big)^\alpha \phi(z)\notag\\
=&\;\i^{|\alpha|}\delta^{|\alpha|/2}\sum_{x\in \Z^2}e^{\i\langle \delta^{1/2} k,x\rangle} \int_{I^\delta_{\delta^{1/2}x-\delta^{-1/2}Ut}}\d ze^{-\i\langle \delta^{1/2}k,\delta^{-1}Ut\rangle}\big(x-\delta^{-1}Ut\big)^\alpha \phi(z)\notag\\
=&\;\i^{|\alpha|}\delta^{|\alpha|/2}\sum_{x_1=-\infty}^\infty e^{\i \delta^{1/2} k_1(x_1-\delta^{-1}U_1t)} \Phi_\delta(x_1),\label{dIBP-11}
\end{align}
where
\begin{align}\label{def:Phidelta}
\begin{split}
\Phi_\delta(x_1)\;\defeq \;&\int_{\delta^{1/2}x_1-\delta^{-1/2}U_1t}^{\delta^{1/2}x_1-\delta^{-1/2}U_1t+\delta^{1/2}}\d z_1\sum_{x_2\in \Z}e^{\i \delta^{1/2} k_2(x_2-\delta^{-1}U_2t)}\\&\times\int_{\delta^{1/2}x_2-\delta^{-1/2}U_2t}^{\delta^{1/2}x_2-\delta^{-1/2}U_2t+\delta^{1/2}}\d z_2\big(x-\delta^{-1}Ut\big)^\alpha \phi(z).
\end{split}
\end{align}
By Proposition~\ref{prop:IBP1}, (\ref{dIBP-111}) and (\ref{dIBP-11}), we get
\begin{align}\label{dIBP2-1}
\begin{split}
&\frac{\partial^\alpha}{\partial k^\alpha}\int_{\R^2}\d z e^{\i\langle \delta^{1/2}k,\lfloor \delta^{-1}Ut+ \delta^{-1/2}z\rfloor \rangle-\i\langle \delta^{1/2}k,\delta^{-1}Ut\rangle }\phi(z)\\
=&\;\frac{(-1)^n\i^{|\alpha|}\delta^{|\alpha|/2}}{\big(\i \mathbb  S_\delta(k_1)\big)^n} 
\sum_{x_1=-\infty}^\infty e^{\i \delta^{1/2} k_1(x_1-\delta^{-1}U_1t)}\Delta^n_\delta\Phi_\delta(x_1),\quad \forall\;n\in \mathbb  Z_+.
\end{split}
\end{align}

Our next step is to rewrite the last sum as an integral. We claim that, for all $n\in \mathbb  Z_+$, 
\begin{align}\label{dIBP3}
\begin{split}
\sum_{x_1=-\infty}^\infty e^{\i \delta^{1/2} k_1(x_1-\delta^{-1}U_1t)}\Delta^n_\delta\Phi_\delta(x_1)
=\delta^{-|\alpha|/2}
\int_{\R^2}\d z e^{\i\langle k,\lfloor z\rfloor_{\delta,t}\rangle}\Delta_{\delta,1}^n(\lf \cdot\rf_{\delta,t}^\alpha\phi)(z) ,
\end{split}
\end{align}
where $\lfloor \cdot\rfloor_{\delta,t}$ and $\Delta_{\delta,1}$ are defined in (\ref{def:floor}) and (\ref{def:Delta1}), respectively.

We first show by an induction on $n$ that 
\begin{align}\label{dIBP4}
\begin{split}
\Delta_\delta^n\Phi_\delta(x_1)
=\delta^{-|\alpha|/2}
 \int_{y_1}^{y_1+\delta^{1/2}}
\d z_1\int_\R \d z_2e^{\i k_2\lfloor z_2\rfloor_{\delta,t,2}}\Delta_{\delta,1}^n\big(\lf \cdot\rf_{\delta,t}^{\alpha}\phi\big)(z),\quad \forall\; n\in \mathbb  Z_+,
\end{split}
\end{align}
where the following change of variables for $x\in \Z^2$ is in use:
\begin{align}\label{cov}
y=\delta^{1/2}x-\delta^{-1/2}Ut
\in \delta^{1/2}\Z^2-\delta^{-1/2}Ut.
\end{align}
First, (\ref{dIBP3}) for $n=0$ follows immediately from the definition (\ref{def:Phidelta}) of $\Phi_\delta$:
\begin{align}
\Phi_\delta(x_1)
=&\;\delta^{-|\alpha|/2}\int_{y_1}^{y_1+\delta^{1/2}}\d z_1\sum_{y_2\in \delta^{1/2}\Z-\delta^{-1/2}U_2t}e^{\i k_2y_2}\int_{y_2}^{y_2+\delta^{1/2}}\d z_2y^\alpha\phi(z)\notag\\
=&\;\delta^{-|\alpha|/2} \int_{y_1}^{y_1+\delta^{1/2}}\d z_1\int_\R\d z_2 e^{\i k_2\lfloor z_2\rfloor_{\delta,t,2}}\lf z\rf_{\delta,t}^{\alpha}\phi(z),\label{Phidelta}
\end{align}
where the last equality uses the definition in (\ref{def:floor}).
In general, if (\ref{dIBP4}) holds for some $n\in \mathbb  Z_+$, we write
\begin{align*}
\Delta_\delta^{n+1} \Phi(x_1)=&\;\frac{\Delta_\delta^n\Phi_\delta(x_1)-\Delta_\delta^n\Phi_\delta(x_1-1)}{\delta^{1/2}}\\
=&\;\frac{\delta^{-|\alpha|/2}}{\delta^{1/2}}\int_{y_1}^{y_1+\delta^{1/2}}\d z_1\int_\R \d z_2 e^{\i k_2\lfloor z_2\rfloor_{\delta,t,2}}\Delta^n_{\delta,1} \big(\lf \cdot\rf_{\delta,t}^{\alpha}\phi\big)(z_1,z_2) \\
&\;-\frac{\delta^{-|\alpha|/2}}{\delta^{1/2}}\int_{y_1}^{y_1+\delta^{1/2}}\d z_1\int_\R \d z_2 e^{\i k_2\lfloor z_2\rfloor_{\delta,t,2}}\Delta^n_{\delta,1} \big(\lf \cdot\rf_{\delta,t}^{\alpha}\phi\big)(z_1-\delta^{1/2},z_2)\\
=&\; \delta^{-|\alpha|/2}\int_{y_1}^{y_1+\delta^{1/2}}\d z_1\int_\R \d z_2 e^{\i k_2\lfloor z_2\rfloor_{\delta,t,2}}\Delta_{\delta,1}^{n+1}\big(\lf \cdot\rf_{\delta,t}^{\alpha_2}\phi\big)(z),
\end{align*}
which gives (\ref{dIBP4}) for $n$ replaced by $n+1$. Hence, by mathematical induction, (\ref{dIBP4}) holds for all $n\in \mathbb  Z_+$.

In summary, from (\ref{dIBP4}) and the definition in (\ref{def:floor}), we get
\begin{align*}
&\;\sum_{x_1=-\infty}^\infty e^{\i \delta^{1/2} k_1(x_1-\delta^{-1}U_1t)}\Delta_\delta^n\Phi_\delta(x_1)\\
=&\;\delta^{-|\alpha|/2}
\sum_{y_1\in \delta^{1/2}\Z-\delta^{-1/2}U_1t}\int_{y_1}^{y_1+\delta^{1/2}}\d z_1 e^{\i k_1\lfloor z_1\rfloor_{\delta,t,1}} \int_\R \d z_2 e^{\i k_2\lfloor z_2\rfloor_{\delta,t,2}}\Delta_{\delta,1}^n\big(\lf \cdot\rf_{\delta,t}^{\alpha}\phi\big)(z)\\
=&\;\delta^{-|\alpha|/2}
\int_{\R^2}\d z e^{\i\langle k,\lfloor z\rfloor_{\delta,t}\rangle}\Delta_{\delta,1}^n\big(\lf \cdot\rf_{\delta,t}^\alpha\phi)(z),
\end{align*}
which gives the required identity in (\ref{dIBP3}). The proof of (\ref{dIBP2}) with $j=1$ is complete upon combining (\ref{dIBP2-1}) and (\ref{dIBP3}).
\end{proof}

\subsection{Stochastic integral representations}\label{sec:SI}
Our goal in this subsection is to obtain joint stochastic integral representations of the two-dimensional Gaussian process $\big(D^\delta(\phi),Z^{\delta,c}(\phi)\big)$, which is defined by (\ref{def:Mdelta}), (\ref{def:Mdeltac}) and (\ref{def:DdeltaZ}). By definition, the process $D^\delta(\phi)$ can be written as
\begin{align}\label{def:Ddelta}
\begin{split}
D^\delta_t(\phi)=&\int_{\R^2}\d z\Big(\zeta^{\infty,\delta}_{\delta^{-1}t}(\lf \delta^{-1}Ut+\delta^{-1/2}V^{-1}z\rf)-\zeta^{\infty,\delta}_{\delta^{-1}t}(\delta^{-1}Ut+\delta^{-1/2}V^{-1}z)\Big)\phi(z).
\end{split}
\end{align}

To lighten the stochastic integral representations to be introduced below, we use the following ad hoc notation:
\begin{align}
\label{def:VW}
\begin{split}
\int_0^t\int_{\delta^{-1/2}\T^2}\mathbb  V\Phi(r,k)\mathbb  W(\d r,\d k)\;\defeq\; &\int_0^t\int_{\delta^{-1/2}\T^2}\Re\,\Phi(r,k)W^1(\d r,\d k)\\
&\hspace{-.5cm}+\int_0^t\int_{\delta^{-1/2}\T^2}\Im\, \Phi(r,k)W^2(\d r,\d k).
\end{split}
\end{align}
Here, $W^1$ and $W^2$ are independent space-time white noises. The covariance measure of $W^j$ is given by $\d r\d k$:
\[
\E \big[W^j_s(\phi_1)W^j_t(\phi_2)\big]=\min\{s,t\}  \langle \phi_1,\phi_2\rangle_{L_2(\R^2,\d k)}. 
\] 
When using the notation in (\ref{def:VW}), we always let $\mathbb  V$ act on the whole function before $\mathbb  W(\d r,\d k)$. Also, we define a change-of-variable operator $T_V$ on $\S(\R^2)$ by
\begin{align}
\label{def:phiV}
\phi_V(z)=T_V\phi(z)\;\defeq\; |\det(V)|\phi(Vz)\in \S(\R^2).
\end{align}

\begin{prop}\label{prop:coupling}\sl
For fixed $\phi\in \S(\R^2)$,
the two-dimensional process $\big(D^\delta(\phi),Z^{\delta,c}(\phi)\big)$ defined by (\ref{def:DdeltaZ}) and the following two-dimensional process $\big(\widetilde{D}^\delta(\phi),\widetilde{Z}^{\delta,c}(\phi)\big)$ have the same law:
\begin{align}
\label{D:coupling}
\widetilde{D}^\delta_t(\phi)=&\;\sqrt{v}\int_0^t \int_{\delta^{-1/2}\T^2}\mathbb  Ve^{\delta^{-1}(t-r)[\A(\delta^{1/2}k)+\i\langle \delta^{1/2}k,U\rangle]}\varphi^\delta_t(k) \mathbb  W(\d r,\d k),\\
\label{M:coupling}
\begin{split}
\widetilde{Z}^{\delta,c}_t(\phi)=&\;\sqrt{v}\int_0^t \int_{\delta^{-1/2}\T^2}\mathbb  Ve^{\delta^{-1}(t-r)[\A(\delta^{1/2}k)+\i\langle \delta^{1/2}k,U\rangle]} \mathcal F\phi_V(k) \mathbb  W(\d r,\d k),
\end{split}
\end{align}
where $\varphi^\delta_t(k)$ and $\mathcal F\phi_V(k)$ are defined by 
\begin{align}
\begin{split}\label{def:varphi}
\varphi^\delta_t(k)\;\defeq  &\;\frac{1}{2\pi}\int_{\R^2}\d z\phi_V(z)e^{\i\langle \delta^{1/2}k,\lf \delta^{-1}Ut+\delta^{-1/2}z\rf\rangle -\i\langle \delta^{1/2}k,\delta^{-1}Ut\rangle}\\
&- \frac{1}{2\pi}\int_{\R^2}\d z\phi_V(z)e^{\i\langle \delta^{1/2}k, \delta^{-1}Ut+\delta^{-1/2}z \rangle- \i\langle \delta^{1/2}k,\delta^{-1}Ut\rangle},
\end{split}\\
\mathcal F\phi_V(k)\;\defeq  &\; \frac{1}{2\pi}\int_{\R^2}\d z\phi_V(z)e^{\i\langle k, z \rangle}.\label{def:FT}
\end{align}
The notation defined in (\ref{def:VW}) and (\ref{def:phiV}) is used here.
\end{prop}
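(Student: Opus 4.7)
Both pairs $(D^\delta(\phi), Z^{\delta,c}(\phi))$ and $(\widetilde D^\delta(\phi), \widetilde Z^{\delta,c}(\phi))$ are jointly centered Gaussian: the first as a linear functional of the centered Gaussian $\zeta^{\infty,\delta}$ (well-definedness of the $z$-integrals being ensured by the growth bound in Proposition~\ref{prop:Mcontinuous}), the second by construction. Equality of their laws thus reduces to matching the three bi-temporal covariance kernels $\Cov[Z^{\delta,c}_s(\phi), Z^{\delta,c}_t(\phi)]$, $\Cov[D^\delta_s(\phi), D^\delta_t(\phi)]$, and $\Cov[D^\delta_s(\phi), Z^{\delta,c}_t(\phi)]$ with their tilde counterparts, for all $0 \leq s,t < \infty$.

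For the tilde side, the independence of $W^1, W^2$ and It\^o's isometry give, for complex-valued deterministic $f, g$,
\begin{align*}
\Cov\Bigl[\int_0^s\int_{\delta^{-1/2}\T^2}\mathbb V f\,\mathbb W,\,\int_0^t\int_{\delta^{-1/2}\T^2}\mathbb V g\,\mathbb W\Bigr] = \int_0^{s\wedge t}\int_{\delta^{-1/2}\T^2}\Re\bigl(f(r,k)\overline{g(r,k)}\bigr)\,\d r\,\d k.
\end{align*}
Substituting the integrands from (\ref{D:coupling})--(\ref{M:coupling}) reduces each tilde covariance to an explicit Fourier integral over $(0,s\wedge t)\times \delta^{-1/2}\T^2$.

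For the direct side, I would expand each $\Cov[\zeta^{\infty,\delta}_{\delta^{-1}s}(x_1), \zeta^{\infty,\delta}_{\delta^{-1}t}(x_2)]$ via~(\ref{def:MinX}), interchange the $z$-integrations with the $(r,k)$-integral by Fubini, and change variables $r = \delta^{-1}r'$, $k = \delta^{1/2}k'$, a measure-preserving bijection sending $\T^2$ to $\delta^{-1/2}\T^2$. The exponent becomes $\delta^{-1}(s-r')\A(\delta^{1/2}k')$. For $Z^{\delta,c}$, the symmetry of $V$ and the identity $\int\d z\,\phi(z)e^{\i\langle V^{-1}k',z\rangle} = 2\pi\,\mathcal F\phi_V(k')$ produce the factor $\mathcal F\phi_V(k')$. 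For $D^\delta$, the further substitution $w = V^{-1}z$ converts $\phi$ into $\phi_V$ and yields the floored-minus-unfloored integral in~(\ref{def:varphi}), giving the factor $2\pi e^{\i\langle\delta^{1/2}k',\delta^{-1}Ut\rangle}\varphi^\delta_t(k')$, and similarly for the $s$-slot.

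The remaining gap is that the direct expression carries drift phases $e^{\pm\i\langle \delta^{1/2}k',\delta^{-1}U\tau\rangle}$ tied to $\tau\in\{s,t\}$, whereas the tilde expression carries phases $e^{\pm \i\delta^{-1/2}(\tau-r)\langle k',U\rangle}$ tied to the integration variable $r$. The elementary identity
\[
e^{\i\delta^{-1/2}s\langle k',U\rangle}e^{-\i\delta^{-1/2}t\langle k',U\rangle} = e^{\i\delta^{-1/2}(s-r)\langle k',U\rangle}e^{-\i\delta^{-1/2}(t-r)\langle k',U\rangle}, \quad r \in \R,
\]
with both sides equal to $e^{-\i\delta^{-1/2}(t-s)\langle k',U\rangle}$, shows that the $r$-dependence cancels in the $f\bar g$ product, so the direct- and tilde-covariance integrals coincide. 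The same reasoning covers the $D^\delta$-self and cross covariances, with $\varphi^\delta_\cdot$ in place of $\mathcal F\phi_V$ as appropriate. The main obstacle is the bookkeeping: tracking the simultaneous rescalings of time, Fourier variable, and spatial mesh, together with the interaction of the substitution $w = V^{-1}z$ with the floor operator in $\varphi^\delta_t$; once the Fourier and phase-cancellation identities are isolated, the three covariance matchings are essentially routine.
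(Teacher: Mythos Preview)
Your proposal is correct and follows essentially the same route as the paper: reduce to matching covariances of centered Gaussians, expand the direct side via~(\ref{def:MinX}), apply the diffusive change of variables $(r,k)\mapsto(\delta^{-1}r',\delta^{1/2}k')$, and recognize the integrand of the tilde side after the phase identity you isolate. The paper carries out the $D^\delta$ case by splitting the covariance into four kernels $\kappa^1,\dots,\kappa^4$ (one for each cross-term in the floored/unfloored difference) and makes explicit that the complex integral so obtained equals its real part because $D^\delta(\phi)$ is real-valued; your ``$\Re(f\bar g)$'' formulation encodes the same observation more compactly.
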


\begin{proof}
First, we show that  for all $\delta\in (0,1)$, $0\leq s\leq t<\infty$ and $\phi\in \S(\R^2)$,
\begin{align}
\begin{split}
\label{covar:D}
\E\big[D^\delta_s(\phi)D^\delta_t(\phi)\big]
=&\;v\int_0^{s} \d r\int_{\delta^{-1/2}\T^2}\d k e^{\delta^{-1}(s-r)[\A(\delta^{1/2}k)+\i\langle \delta^{1/2}k,U\rangle]}\varphi^\delta_{s}(k)\\
&\hspace{2.4cm}\times e^{\delta^{-1}(t-r)[\A(-\delta^{1/2}k)-\i\langle \delta^{1/2}k,U\rangle]} \overline{\varphi^\delta_{t}(k)}.
\end{split}
\end{align}
By the change of variables $z\mapsto Vz$, it follows from (\ref{def:Ddelta}) that
\begin{align*}
\E \big[D^\delta_s(\phi)D^\delta_t(\phi)\big]=&\int_{\R^2}\d z\phi_V(z)\int_{\R^2}\d z'\phi_V(z')\kappa_{s,t}(z,z'),
\end{align*}
where
\begin{align}
\kappa_{s,t}(z,z')
=&\;\E\big[\zeta^{\infty,\delta}_{\delta^{-1}s}(\lf \delta^{-1}Us+\delta^{-1/2}z\rf)\zeta^{\infty,\delta}_{\delta^{-1}t}(\lf \delta^{-1}Ut+\delta^{-1/2}z'\rf)\big]\notag\\
&-\E\big[\zeta^{\infty,\delta}_{\delta^{-1}s}(\lf \delta^{-1}Us+\delta^{-1/2}z\rf)\zeta^{\infty,\delta}_{\delta^{-1}t}(\delta^{-1}Ut+\delta^{-1/2}z')\big]\notag\\
&-\E\big[\zeta^{\infty,\delta}_{\delta^{-1}s}(\delta^{-1}Us+\delta^{-1/2}z)\zeta^{\infty,\delta}_{\delta^{-1}t}(\lf \delta^{-1}Ut+\delta^{-1/2}z'\rf)\big]\notag\\
&+\E\big[\zeta^{\infty,\delta}_{\delta^{-1}s}(\delta^{-1}Us+\delta^{-1/2}z)\zeta^{\infty,\delta}_{\delta^{-1}t}(\delta^{-1}Ut+\delta^{-1/2}z')\big]\notag\\
=&\;\kappa^1_{s,t}(z,z')-\kappa^2_{s,t}(z,z')-\kappa^3_{s,t}(z,z')+\kappa^4_{s,t}(z,z').\label{def:kappa1-4}
\end{align}
Recall the definition (\ref{def:RA}) of $R(k)$.
By (\ref{def:MinX}),  $\kappa^1_{s,t}(z,z')$ defined by the last equality admits the following integral representation:
\begin{align*}
\kappa^1_{s,t}(z,z')
=&\;\frac{v}{(2\pi)^2}\int_0^{\delta^{-1}s} \d r\int_{\T^2}\d ke^{\delta^{-1} (s-\delta r)\A(k)}e^{\i\langle k,\lf \delta^{-1}Us+\delta^{-1/2}z\rf\rangle }\\
&\;\times e^{\delta^{-1} (t-\delta r)\A(-k)} e^{-\i\langle k,\lf \delta^{-1}Ut+\delta^{-1/2}z'\rf\rangle }\\
=&\;\frac{v}{(2\pi)^2}\int_0^{\delta^{-1}s} \d r\int_{\T^2}\d ke^{\delta^{-1} (s-\delta r)[\A(k)+\i \langle k,U\rangle]}\\
&\;\times e^{\i\langle k,\lf \delta^{-1}Us+\delta^{-1/2}z\rf \rangle-\delta^{-1}s\i \langle k,U\rangle}\\
&\;\times e^{\delta^{-1} (t-\delta r)[\A(-k)-\i\langle k,U\rangle]}\\
&\;\times  e^{-\i\langle k,\lf \delta^{-1}Ut+\delta^{-1/2}z'\rf
\rangle+\delta^{-1}t \i\langle k,U\rangle }\\
=&\;\frac{v}{(2\pi)^2}\int_0^{s} \d r'\int_{\delta^{-1/2}\T^2}\d k' e^{\delta^{-1} (s-r')[\A(\delta^{1/2}k')+\i\langle \delta^{1/2}k',U\rangle]}\\
&\;\times e^{\i\langle \delta^{1/2}k',\lf \delta^{-1}Us+\delta^{-1/2}z\rf \rangle-\delta^{-1}s \i\langle \delta^{1/2}k',U\rangle}\\
&\;\times e^{\delta^{-1} (t-r')[\A(-\delta^{1/2}k')-\i\langle \delta^{1/2}k',U\rangle]}\\
&\;\times e^{-\i\langle \delta^{1/2}k',\lf \delta^{-1}Ut+\delta^{-1/2}z'\rf\rangle +\delta^{-1}t \i\langle \delta^{1/2}k',U\rangle},
\end{align*}
where the third equality follows by changing variables to $\delta^{1/2}k'=k$ and $\delta^{-1}r'=r$. That is, we apply the usual diffusive scaling to exchange the scales of time and space in the last equality.

Next, integrating both sides of the last equality against $\d z\phi_V(z)dz'\phi_V(z')$ gives
\begin{align}
&\int_{\R^2}\d z\phi_V(z)\int_{\R^2}\d z'\phi_V(z')\kappa^1_{s,t}(z,z')\notag\\
\begin{split}\label{kappa1}
=&\;v\int_0^{s} \d r\int_{\delta^{-1/2}\T^2}\d k e^{\delta^{-1} (s-r)[\A(\delta^{1/2}k)+\i\langle \delta^{1/2}k,U\rangle]}\\
&\times \frac{1}{2\pi}\int_{\R^2}\d z\phi_V(z)e^{\i\langle \delta^{1/2}k,\lf \delta^{-1}Us+\delta^{-1/2}z\rf\rangle -\i\langle \delta^{1/2}k,\delta^{-1}Us\rangle}\\
&\times e^{\delta^{-1}( t-r)[\A(-\delta^{1/2}k)-\i\langle \delta^{1/2}k,U\rangle]}\\
&\times \frac{1}{2\pi}\int_{\R^2}dz'\phi_V(z')e^{-\i\langle \delta^{1/2}k,\lf \delta^{-1}Ut+\delta^{-1/2}z'\rf \rangle+ \i\langle \delta^{1/2}k,\delta^{-1}Ut\rangle}.
\end{split}
\end{align}
With respect to the other kernels $\kappa^j_{s,t}(z,z')$ defined by (\ref{def:kappa1-4}), 
similar integral representations hold for 
\begin{align}\label{kappa2-4}
\int_{\R^2}\d z\phi_V(z)\int_{\R^2}dz'\phi_V(z')\kappa^j_{s,t}(z,z'),\quad 2\leq j\leq 4;
\end{align}
the minor differences are about whether one should remove the floor functions in (\ref{kappa1}) or not.
The formula (\ref{covar:D}) follows from (\ref{kappa1}) and the analogous identities for the integrals in (\ref{kappa2-4}).

To see that $\big(D^\delta(\phi),Z^{\delta,c}(\phi)\big)$ has the same law as $\big(\widetilde{D}^\delta(\phi),\widetilde{Z}^{\delta,c}(\phi)\big)$, we first note that 
for all $0\leq s\leq  t<\infty$, the definition of $\widetilde{D}(\phi)$ in (\ref{D:coupling}) implies
\begin{align}
\E \big[\widetilde{D}^{\delta}_s(\phi)\widetilde{D}^{\delta}_t(\phi)\big]
=&\;v\int_0^{s}\d r\int_{\delta^{-1/2}\T^2}\d k\,\Re \Big(e^{\delta^{-1}(s-r)[\A(\delta^{1/2}k)+\i\langle \delta^{1/2}k,U\rangle]}\varphi^\delta_s(k)\Big)\notag\\
&\times \Re \Big(e^{\delta^{-1}(t-r)[\A(\delta^{1/2}k)+\i\langle \delta^{1/2}k,U\rangle]}\varphi^\delta_t(k)\Big)\notag\\
&+v\int_0^{s} \d r\int_{\delta^{-1/2}\T^2}\d k\,\Im \Big(e^{\delta^{-1}(s-r)[\A(\delta^{1/2}k)+\i\langle \delta^{1/2}k,U\rangle]}\varphi^\delta_s(k)\Big)\notag\\
&\times \Im \Big(e^{\delta^{-1}(t-r)[\A(\delta^{1/2}k)+\i\langle \delta^{1/2}k,U\rangle]}\varphi^\delta_{t}(k)\Big)\notag\\
=&\;\E \big[D^{\delta}_s(\phi)D^{\delta}_t(\phi)\big]\label{DD:covar}
\end{align}
by (\ref{covar:D}) since $D^\delta(\phi)$ is a real-valued process so that the imaginary part of the integral in (\ref{covar:D}) vanishes. 
Similarly, along with the definition in (\ref{M:coupling}), we get
\begin{align*}
\E \big[\widetilde{D}^{\delta}_s(\phi)\widetilde{Z}^{\delta,c}_t(\phi)\big]=\E \big[D^{\delta}_s(\phi)Z^{\delta,c}_t(\phi)\big]\quad\mbox{and}\quad 
\E \big[\widetilde{Z}^{\delta,c}_s(\phi)\widetilde{Z}^{\delta,c}_t(\phi)\big]=\E \big[Z^{\delta,c}_s(\phi)Z^{\delta,c}_t(\phi)\big]
\end{align*}
for all $0\leq s, t<\infty$. 
Since $\big(D^\delta(\phi),Z^{\delta,c}(\phi)\big)$ and $\big(\widetilde{D}^\delta(\phi),\widetilde{Z}^{\delta,c}(\phi)\big)$ are both two-dimensional Gaussian processes with c\`adl\`ag paths, (\ref{DD:covar}) and the last display show that they have the same law. The proof is complete.
\end{proof}

Henceforth, we identify $\big(D^\delta(\phi),Z^{\delta,c}(\phi)\big)$ with the two-dimensional vector of stochastic integrals defined in (\ref{D:coupling}) and (\ref{M:coupling}).

Our next step is to introduce  decompositions of $D^{\delta}(\phi)$ and $Z^{\delta,c}(\phi)$ which will be used for the rest of Section~\ref{sec:noise}. For the decomposition of $D^\delta(\phi)$, we use the following representations of the function $\varphi_t^\delta$ defined by (\ref{def:varphi}). They show the precise decay rate of the function. 

\begin{lem}\label{lem:varphiIBP}\sl
For $m\in \mathbb  N$,
let $\{\Gamma_1,\cdots,\Gamma_m\}$ be a partition of $\R^2$ by Borel subsets, $(n_1,\cdots,n_m)\in \mathbb  Z_+^m$, and $(j_1,\cdots,j_m)\in \{1,2\}^m$ such that $k_{j_\ell}\neq 0$ for  all $k=(k_1,k_2)\in\Gamma_\ell$ whenever $n_\ell>0$. 
Then for any $\delta\in (0,1)$ and $t\in \R_+$, the function $\varphi^\delta_t$ defined on $ \delta^{-1/2}\T^2$ by (\ref{def:varphi}) can be written as
\begin{align}
\begin{split}
&\;\varphi^\delta_t(k)\\
=&\;\frac{1}{2\pi}\sum_{\ell=1}^m\1_{\Gamma_\ell}(k) \int_{\R^2}\d z\Bigg[\frac{(-1)^{n_\ell}e^{\i\langle k,\lf z\rf_{\delta,t}\rangle }}{\big(\i \mathbb  S_\delta(k_{j_\ell})\big)^{n_\ell}} 
\Delta^{n_\ell}_{\delta,j_\ell}\phi_V(z)
-\frac{(-1)^{n_\ell}e^{\i\langle k,z\rangle} }{(\i k_{j_\ell})^{n_\ell}}\partial^{n_\ell}_{j_\ell} \phi_V(z)
\Bigg]\label{sum:n0}
\end{split}\\
\begin{split}
=&\;\sum_{x\in \Z^2} \int_{\delta^{1/2}x_1-\delta^{-1/2}U_1t}^{\delta^{1/2}x_1-\delta^{-1/2}U_1t+\delta^{1/2}}\d z_1\int_{\delta^{1/2}x_2-\delta^{-1/2}U_2t}^{\delta^{1/2}x_2-\delta^{-1/2}U_2t+\delta^{1/2}}\d z_2\\
&\;\times \left(\sum_{\ell=1}^m\varphi^{\delta,n_\ell}_{\delta^{1/2}x-\delta^{-1/2}Ut,z,j_\ell}(k)\1_{\Gamma_\ell}(k)\right),\label{sum:n1}
\end{split}
\end{align}
where $\mathbb  S_\delta$ and $\phi_V$ are 
defined in (\ref{def:Sdelta})  and (\ref{def:phiV}), respectively,  
 $\partial_j=\partial/\partial z_j$, and
\begin{align}
\varphi^{\delta,n}_{y,z,j}(k)\;\defeq &\;\frac{1}{2\pi}\left[\frac{(-1)^ne^{\i\langle k,y\rangle}}{\big(\i\mathbb  S_\delta(k_j)\big)^n}\Delta^n_{\delta,j}\phi_V(z)
-\frac{(-1)^ne^{\i\langle k,z\rangle}}{(\i k_j)^n}\partial^n_j\phi_V(z)\right].\label{def:varphi1}
\end{align}
\end{lem}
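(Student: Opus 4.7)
The plan is to derive both representations by applying two integration-by-parts identities—Proposition~\ref{prop:IBP2} to the first integral in the definition of $\varphi^\delta_t$, and classical Fourier IBP to the second—and then to rewrite the resulting expression in cell-indexed form.

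First, fix an index $\ell$ and restrict attention to $k\in\Gamma_\ell$. Applying Proposition~\ref{prop:IBP2} with $\alpha=0$, $n=n_\ell$, $j=j_\ell$, and with $\phi$ replaced by $\phi_V\in\S(\R^2)$ (still Schwartz since $V$ is invertible) converts the first integral in \eqref{def:varphi} into
\begin{align*}
\frac{(-1)^{n_\ell}}{\bigl(\i\mathbb{S}_\delta(k_{j_\ell})\bigr)^{n_\ell}}\int_{\R^2}\d z\, e^{\i\langle k,\lf z\rf_{\delta,t}\rangle}\Delta^{n_\ell}_{\delta,j_\ell}\phi_V(z).
\end{align*}
For the second integral in \eqref{def:varphi}, the exponential factors carrying $\delta^{-1}Ut$ cancel exactly, leaving $(2\pi)^{-1}\int dz\,\phi_V(z)e^{\i\langle k,z\rangle}$, a standard Fourier integral. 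Integrating by parts $n_\ell$ times in the $j_\ell$-th variable, with boundary terms vanishing by Schwartz decay of $\phi_V$, yields
\begin{align*}
\frac{(-1)^{n_\ell}}{(\i k_{j_\ell})^{n_\ell}}\int_{\R^2}\d z\, e^{\i\langle k,z\rangle}\partial_{j_\ell}^{n_\ell}\phi_V(z).
\end{align*}
Multiplying by $\1_{\Gamma_\ell}(k)$ and summing over $\ell$ produces (\ref{sum:n0}).

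For (\ref{sum:n1}), I partition $\R^2$ into the semi-open squares $I^\delta_{\delta^{1/2}x-\delta^{-1/2}Ut}$ of side $\delta^{1/2}$ indexed by $x\in\Z^2$, as in \eqref{def:Idelta}. On each such cell the floor $\lf z\rf_{\delta,t}$ is constant and equal to $\delta^{1/2}x-\delta^{-1/2}Ut$, so that $e^{\i\langle k,\lf z\rf_{\delta,t}\rangle}=e^{\i\langle k,\delta^{1/2}x-\delta^{-1/2}Ut\rangle}$. Substituting this into the integrand of (\ref{sum:n0}) within the cell indexed by $x$ makes the bracketed expression there coincide with $2\pi$ times $\varphi^{\delta,n_\ell}_{\delta^{1/2}x-\delta^{-1/2}Ut,\,z,\,j_\ell}(k)$ as defined in \eqref{def:varphi1}, with the $1/(2\pi)$ of (\ref{sum:n0}) absorbed by that definition. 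Exchanging sum and integral, which is justified by absolute summability (the Schwartz character of $\phi_V$ makes $\Delta^{n_\ell}_{\delta,j_\ell}\phi_V$ and $\partial_{j_\ell}^{n_\ell}\phi_V$ decay faster than any polynomial, uniformly in $\delta\in(0,1)$), delivers (\ref{sum:n1}).

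The derivation is essentially bookkeeping on top of Proposition~\ref{prop:IBP2}; the only subtlety to keep track of is the scaling distinction between the operators $\Delta_\delta$ used inside Proposition~\ref{prop:IBP2} (which carries no spatial scaling) and the $\Delta_{\delta,j}$ appearing here (which does carry a $\delta^{1/2}$ scaling by \eqref{def:Delta1}). With $\alpha=0$ the compensating factor $\delta^{-|\alpha|/2}$ from \eqref{dIBP3} disappears, so the translation between the two conventions is direct, and no additional analytic input is needed beyond the Schwartz decay of $\phi_V$.
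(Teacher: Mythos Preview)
Your proof is correct and follows essentially the same approach as the paper's: apply Proposition~\ref{prop:IBP2} with $\alpha=(0,0)$ to the first integral in (\ref{def:varphi}), use ordinary integration by parts on the second, then decompose $\R^2$ into the cells $I^\delta_{\delta^{1/2}x-\delta^{-1/2}Ut}$ on which $\lf z\rf_{\delta,t}$ is constant. One small remark: your closing paragraph about the ``scaling distinction'' between $\Delta_\delta$ and $\Delta_{\delta,j}$ is slightly off, since the statement (\ref{dIBP2}) of Proposition~\ref{prop:IBP2} is already phrased in terms of $\Delta_{\delta,j}$ (the conversion from $\Delta_\delta$ to $\Delta_{\delta,j}$ happens inside the proof of that proposition), so no translation is needed here at all.
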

\begin{proof}
For all $n\in \mathbb  Z_+$ and $k=(k_1,k_2)\in \delta^{-1/2}\T^2$ with $k_j\neq 0$ if $n>0$,
the first integral in the definition (\ref{def:varphi}) of $\varphi^\delta_t(k)$ can be written as
\begin{align}
&\;\frac{1}{2\pi}\int_{\R^2}\d ze^{\i\langle \delta^{1/2}k,\lf \delta^{-1}Ut+\delta^{-1/2}z\rf \rangle- \i\langle \delta^{1/2}k,\delta^{-1}Ut\rangle}\phi_V(z)\notag\\
=&\;\frac{1}{2\pi}\frac{(-1)^n}{\big(\i \mathbb  S_\delta(k_j)\big)^n}\int_{\R^2}\d ze^{\i\langle k,\lf z\rf_{\delta,t}\rangle }
\Delta^n_{\delta,j}\phi_V(z)\label{def:phin2}
\end{align}
by (\ref{dIBP2}) with $\alpha=(0,0)$. 
Also, the second integral in the definition (\ref{def:varphi}) of $\varphi^\delta_t(k)$  can be written as
\begin{align}
\frac{1}{2\pi}\int_{\R^2}\d ze^{\i\langle k, z\rangle}\phi_V(z)
 =\frac{1}{2\pi}\frac{(-1)^n}{(\i k_j)^n}\int_{\R^2}\d ze^{\i\langle k,z\rangle} \partial^n_j\phi_V(z)\label{def:phin1}
\end{align}
by integration by parts and the fact that $\phi_V\in \S(\R^2)$.

From (\ref{def:phin2}) and (\ref{def:phin1}), it follows that 
\begin{align}
\varphi^\delta_t(k)=&\int_{\R^2}\d z\frac{1}{2\pi}\left[ \frac{(-1)^ne^{\i\langle k,\lf z\rf_{\delta,t}\rangle }}{\big(\i \mathbb  S_\delta(k_j)\big)^n}
\Delta^n_{\delta,j}\phi_V(z)-\frac{(-1)^ne^{\i\langle k,z\rangle} }{(\i k_j)^n}\partial_j^n\phi_V (z)\right]\notag\\
=&
\int_{\R^2}\d z\sum_{y\in \delta^{1/2}\Z^2-\delta^{-1/2}Ut}\1_{I^\delta_{y}}(z)\varphi^{\delta,n}_{y,z,j}(k),\notag
\end{align}
where $I^\delta_y$ and $\varphi^{\delta,n}_{y,z,j}(k)$ are defined by (\ref{def:Idelta}) and (\ref{def:varphi1}), respectively. The last display is enough for both (\ref{sum:n0}) and (\ref{sum:n1}). 
\end{proof}

\begin{ass}\label{ass:varphi}\sl
Set $\Gamma_1=[-1,1]^2$, $j_1=1$, $n_1=0$ and $n_2=\cdots=n_m=10$. Fix a choice of rectangles
 $\Gamma_2,\cdots,\Gamma_m$ and $j_2,\cdots,j_m\in \{1,2\}^m$ for some $m\geq 2$ such that $k=(k_1,k_2)\mapsto |k_{j_\ell}|$ is bounded away from zero on $\Gamma_\ell$, for all $2\leq \ell\leq m$, and $\{\Gamma_1,\cdots,\Gamma_m\}$ is a  partition of $\R^2$. 
 
For every $\delta\in (0,1)$, we decompose the function $\varphi^\delta_t$, defined by (\ref{def:varphi}), according to (\ref{sum:n0}) as follows:
\begin{align}\label{dec:varphi}
\varphi^{\delta}_t(k)=\varphi^{\delta,1}(k)+\varphi_t^{\delta,2}(k),\quad k\in \delta^{-1/2}\T^2,
\end{align}
where
\begin{align}
\begin{split}
&\;\varphi^{\delta,1}(k)\\
=&\;\frac{1}{2\pi}\sum_{\ell=1}^m\1_{\Gamma_\ell}(k) \int_{\R^2}\d z
\left[\frac{(-1)^{n_\ell}e^{\i\langle k, z\rangle }}{\big(\i \mathbb  S_\delta(k_{j_\ell})\big)^{n_\ell}}
\Delta^{n_\ell}_{\delta,j_\ell}\phi_V(z)- \frac{(-1)^{n_\ell}e^{\i\langle k,z\rangle} }{(\i k_{j_\ell})^{n_\ell}}\partial_{j_\ell}^{n_\ell}\phi_V(z)\right],\label{def:varphi1t}
\end{split}\\
\begin{split}
&\;\varphi^{\delta,2}_t(k)\\
=&\;\frac{1}{2\pi}\sum_{\ell=1}^m\1_{\Gamma_\ell}(k) \int_{\R^2}\d z
\frac{(-1)^{n_\ell}\big(e^{\i\langle k,\lf z\rf_{\delta,t}\rangle }-e^{\i\langle k,z\rangle}\big)}{\big(\i \mathbb  S_\delta(k_{j_\ell})\big)^{n_\ell}}
 \Delta^{n_\ell}_{\delta,j_\ell}\phi_V(z).
\label{def:varphi2t}
\end{split}
\end{align}
\hfill $\blacksquare$ 
\end{ass}

We stress that the  function $\varphi^{\delta,1}$ defined in (\ref{def:varphi1t}) does not depend on $t$, as we let the factors $e^{\i\langle k,\lf z\rf_{\delta,t}\rangle}$ in the representation (\ref{sum:n0}) of $\varphi^\delta_t$ taken over by $\varphi^{\delta,2}_t$.

Now we decompose $D^\delta(\phi)$ and $Z^{\delta,c}(\phi)$, using the notation in  (\ref{def:VW}). Recall that these processes are now defined by the stochastic integrals in (\ref{D:coupling}) and (\ref{M:coupling}). The decomposition of $D^\delta_t(\phi)$ is given by
\begin{align}\label{def:Ddec}
D^\delta_t(\phi)=D^{\delta,1}_t(\phi)+D^{\delta,2}_t(\phi)+D^{\delta,3}_t(\phi),
\end{align}
where, with the notation in (\ref{dec:varphi}),
the three processes in (\ref{def:Ddec}) are defined by
\begin{align}
D^{\delta,1}_t(\phi)=&\;\sqrt{v}\int_0^t \int_{\delta^{-1/2}\T^2}\mathbb  Ve^{\delta^{-1}(t-r)Q(\delta^{1/2}k)/2}\varphi^{\delta,1}(k) \mathbb  W(\d r,\d k),\label{def:Ddelta1}\\
D^{\delta,2}_t(\phi)=&\;\sqrt{v}\int_0^t \int_{\delta^{-1/2}\T^2}\mathbb  Ve^{\delta^{-1}(t-r)Q(\delta^{1/2}k)/2}\varphi^{\delta,2}_t(k) \mathbb  W(\d r,\d k),\label{def:Ddelta2}\\
\begin{split}
D^{\delta,3}_t(\phi)=&\;\sqrt{v}\int_0^t \int_{\delta^{-1/2}\T^2} \mathbb  V\Big(e^{\delta^{-1}(t-r)[\A(\delta^{1/2}k)+\i\langle \delta^{1/2}k,U\rangle]}-e^{\delta^{-1}(t-r)Q(\delta^{1/2}k)/2}\Big)\\&\hspace{2.5cm}\times \varphi^\delta_t(k) \mathbb  W(\d r,\d k).\label{def:Ddelta3}
\end{split}
\end{align}
The decomposition  for $Z^{\delta,c}(\phi)$ is simpler:
\begin{align}\label{def:Mdec}
Z^{\delta,c}_t(\phi)=Z^{\delta,c,1}_t(\phi)+Z^{\delta,c,2}_t(\phi),
\end{align}
where, with the notation in (\ref{def:FT}), $Z^{\delta,c,1}_t(\phi)$ and $Z^{\delta,c,2}_t(\phi)$ are defined by
\begin{align}
Z^{\delta,c,1}_t(\phi)=&\;\sqrt{v}\int_0^t \int_{\delta^{-1/2}\T^2}\mathbb  Ve^{\delta^{-1}(t-r)Q(\delta^{1/2}k)/2}\mathcal F\phi_V(k)\mathbb  W(\d r,\d k),\label{def:Mdeltac1}\\
\begin{split}
Z^{\delta,c,2}_t(\phi)=&\;\sqrt{v}\int_0^t \int_{\delta^{-1/2}\T^2} \mathbb  V\Big(e^{\delta^{-1}(t-r)[\A(\delta^{1/2}k)+\i\langle \delta^{1/2}k,U\rangle]}-e^{\delta^{-1}(t-r)Q(\delta^{1/2}k)/2}\Big)\\
&\hspace{2.8cm}\times \mathcal F\phi_V(k) \mathbb  W(\d r,\d k).\label{def:Mdeltac2}
\end{split}
\end{align}

\begin{prop}\label{prop:M1}\sl
For any $\phi\in \mathcal S(\R^2)$,
the family of laws of $\{Z^{\delta,c,1}(\phi)\}_{\delta\in (0,1)}$ defined above in (\ref{def:Mdeltac1}) is tight in the space of probability measures on $C(\R_+,\R)$. 
  \end{prop}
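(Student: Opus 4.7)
The key simplification is that $Q$ is a quadratic form, so
\[
\delta^{-1}(t-r)Q(\delta^{1/2}k)/2=(t-r)Q(k)/2,
\]
and the dependence of $Z^{\delta,c,1}_t(\phi)$ on $\delta$ enters only through the integration domain $\delta^{-1/2}\T^2\uparrow\R^2$. I will verify Kolmogorov's tightness criterion on each $C([0,T],\R)$ with constants uniform in $\delta$, which is enough for tightness on $C(\R_+,\R)$ since the process vanishes at $t=0$.

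Fix $0\leq s\leq t\leq T$. Splitting the stochastic integral over $[0,s]$ and $[s,t]$ yields two independent centered Gaussian pieces, so by the definition (\ref{def:VW}),
\[
\E\bigl[|Z^{\delta,c,1}_t(\phi)-Z^{\delta,c,1}_s(\phi)|^2\bigr]=v\,(I^\delta_1+I^\delta_2),
\]
where $I^\delta_1\defeq \int_s^t\d r\int_{\delta^{-1/2}\T^2}e^{(t-r)Q(k)}|\mathcal F\phi_V(k)|^2\d k\leq (t-s)\|\mathcal F\phi_V\|_{L^2(\R^2)}^2$ and
\[
I^\delta_2\defeq \int_0^s\d r\int_{\delta^{-1/2}\T^2}e^{(s-r)Q(k)}\bigl(1-e^{(t-s)Q(k)/2}\bigr)^2|\mathcal F\phi_V(k)|^2\d k.
\]
For $I^\delta_2$ I apply the elementary inequality $1-e^x\leq (-x)^\beta$ for $x\leq 0$ and $\beta\in[0,1]$, followed by $\int_0^s e^{(s-r)Q(k)}\d r\leq |Q(k)|^{-1}$, to obtain
\[
I^\delta_2\leq 2^{-2\beta}|t-s|^{2\beta}\int_{\R^2}|Q(k)|^{2\beta-1}|\mathcal F\phi_V(k)|^2\d k.
\]
Strict negative definiteness of $Q$ gives $|Q(k)|\asymp|k|^2$, so the spectral integral converges whenever $\beta\in(0,1/2)$: the singularity $|k|^{4\beta-2}$ at $k=0$ is integrable in two dimensions, while $\mathcal F\phi_V\in\S(\R^2)$ provides rapid decay at infinity.

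Fixing such a $\beta$, the above estimates combine into $\E\bigl[|Z^{\delta,c,1}_t(\phi)-Z^{\delta,c,1}_s(\phi)|^2\bigr]\leq C_{\phi,T}|t-s|^{2\beta}$ uniformly in $\delta\in(0,1)$. By Gaussianity of the increment, $\E\bigl[|Z^{\delta,c,1}_t(\phi)-Z^{\delta,c,1}_s(\phi)|^{2p}\bigr]\leq C_{p,\phi,T}|t-s|^{2p\beta}$; choosing $p>1/(2\beta)$ makes the exponent strictly larger than one, and Kolmogorov's criterion \cite[Theorem~XIII.1.8]{RY} yields tightness on $C([0,T],\R)$ for every $T>0$, hence on $C(\R_+,\R)$. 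The only delicate point I foresee is the selection of $\beta$: it must lie strictly below $1/2$ for the spectral integral to converge at $k=0$ in two dimensions, and strictly above $0$ to produce any positive H\"older exponent, so the admissible window $\beta\in(0,1/2)$ is essential.
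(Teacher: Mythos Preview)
Your proof is correct and follows the same overall strategy as the paper: bound the second moment of the increment uniformly in $\delta$, then use Gaussianity to pass to higher moments and apply Kolmogorov's criterion. The only real difference lies in how you estimate the $[0,s]$ piece. The paper applies the Lipschitz bound $|e^{(t-r)Q(k)/2}-e^{(s-r)Q(k)/2}|\leq e^{(s-r)Q(k)/2}\,|t-s|\,|Q(k)|/2$ (its inequality~(\ref{exp-ineq})) and obtains directly
\[
\E\bigl[|Z^{\delta,c,1}_t(\phi)-Z^{\delta,c,1}_s(\phi)|^2\bigr]\leq (t-s)^2\,v\int_0^T\!\!\int_{\R^2}e^{rQ(k)}\,|Q(k)/2|^2\,|\mathcal F\phi_V(k)|^2\,\d k\,\d r,
\]
which is finite since $\mathcal F\phi_V\in\S(\R^2)$. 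Your route via $1-e^x\leq(-x)^\beta$ together with $\int_0^s e^{(s-r)Q(k)}\,\d r\leq|Q(k)|^{-1}$ is also valid, but the restriction $\beta\in(0,1/2)$ is unnecessary: the spectral integral $\int_{\R^2}|Q(k)|^{2\beta-1}|\mathcal F\phi_V(k)|^2\,\d k$ converges for every $\beta\in(0,1]$, since for $\beta\geq 1/2$ there is no singularity at $k=0$ at all. Taking $\beta=1$ would simplify your argument and essentially reproduce the paper's bound. On the other hand, your explicit treatment of the $[s,t]$ contribution $I_1^\delta$ is a point the paper's displayed computation glosses over.
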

  \begin{proof}[\bf Proof]
For any $0\leq s\leq t\leq T$,
  \begin{align*}
 \E\left[\big| Z^{\delta,c,1}_s(\phi)- Z^{\delta,c,1}_t(\phi)\big|^2\right]
 = &\;v\int_0^s\d r\int_{\R^2}\d k\left|e^{(s-r)Q(k)/2}\mathcal F\phi_V(k)-e^{(t-r)Q(k)/2}\mathcal F\phi_V(k)\right|^2\\
\leq &\;(t-s)^2 v\int_0^T \d r\int_{\R^2}\d ke^{rQ(k)}|Q(k)/2|^2 |\mathcal F\phi_V(k)|^2  
    \end{align*}  
by   (\ref{exp-ineq}) and the nonpositivity of $Q(k)$ (see Assumption~\ref{ass} (4)). Since  $\mathcal F\phi_V\in \S(\R^2)$, the proposition follows from  the last inequality and
Kolmogorov's criterion for weak compactness \cite[Theorem~XIII.1.8]{RY}.\mbox{}
  \end{proof}

We show the weak convergence to zero of  $D^{\delta,3}(\phi)$ and $Z^{\delta,c,2}(\phi)$ in Section~\ref{sec:DM1} and the weak convergence to zero of $D^{\delta,1}(\phi)$ and $D^{\delta,2}(\phi)$ in Section~\ref{sec:DM2}.

\subsection{Removal of remainders: dampening oscillations}\label{sec:DM1}
Our goal in this subsection is to show that the processes $D^{\delta,3}(\phi)$ and $Z^{\delta,c,2}(\phi)$ in (\ref{def:Ddelta3}) and (\ref{def:Mdeltac2}) converge weakly to zero as $\delta\to 0+$. 
The proofs mainly handle the differences of exponentials in  (\ref{def:Ddelta3}) and (\ref{def:Mdeltac2}), and for (\ref{def:Ddelta3}), dampen oscillations in the functions $\varphi^\delta_t(k)$ arising from the floor function (recall (\ref{def:varphi})); the effect we also need is that the convergences to zero stay regularly in $C(\R_+,\R)$.
Handling the differences of the exponentials amounts to removing the remainders in the following equations:
\begin{align}\label{A:Taylor}
\delta^{-1}[\A(\pm \delta^{1/2}k)\pm  \i\langle \delta^{1/2}k,U\rangle]=\delta^{-1}\frac{Q(\delta^{1/2}k)}{2}+\mbox{remainder}.
\end{align} 
Note that (\ref{A:Taylor}) follows from the Taylor expansion of $\widehat{A}$ obtained by combining Assumption~\ref{ass} (4) and the definition (\ref{def:U}) of $U$.

We set some notation for the moduli of continuity of $D^{\delta,3}(\phi)$ and $Z^{\delta,2}(\phi)$. 
By polarization, the metrics $\rho^D_\delta$ and $\rho^Z_\delta$ induced by their covariance functions are given as follows: for $0\leq s\leq t<\infty$,
\begin{align}
\begin{split}
\label{def:rhoD}
\rho^D_\delta(s,t)\;\defeq &\; \E\big[\big|D^{\delta,3}_s(\phi)-D^{\delta,3}_t(\phi)\big|^2\big]^{1/2}\\=\,&\;\left(v\int_0^{s} \d r\int_{\delta^{-1/2}\T^2}\d k|I_{\ref{def:rhoD}}(s,t;r,k)|^2\right)^{1/2},
\end{split}\\
\begin{split}
\rho^Z_\delta(s,t)\;\defeq  &\; \E\big[\big|Z^{\delta,c,2}_s(\phi)-Z^{\delta,c,2}_t(\phi)\big|^2\big]^{1/2}\\
=\,&\;\left(v\int_0^{s} \d r\int_{\delta^{-1/2}\T^2}\d k|I_{\ref{def:rhoM}}(s,t;r,k)|^2\right)^{1/2},\label{def:rhoM}
\end{split}
\end{align}
where
\begin{align*}
I_{\ref{def:rhoD}}(s,t;r,k)=&\;\left(e^{\delta^{-1}(s-r)[\A(\delta^{1/2}k)+\i\langle \delta^{1/2}k,U\rangle]}-e^{\delta^{-1}(s-r)Q(\delta^{1/2}k)/2}\right)\varphi^\delta_{s}(k)\\
&\;- \left(e^{\delta^{-1}(t-r)[\A(\delta^{1/2}k)+\i\langle \delta^{1/2}k,U\rangle]}-e^{\delta^{-1}(t-r)Q(\delta^{1/2}k)/2}\right)\varphi^\delta_{t}(k),\\
I_{\ref{def:rhoM}}(s,t;r,k)=&\;\left(e^{\delta^{-1}(s-r)[\A(\delta^{1/2}k)+\i\langle \delta^{1/2}k,U\rangle]}-e^{\delta^{-1}(s-r)Q(\delta^{1/2}k)/2}\right)\mathcal F\phi_V(k)\\
&\;- \left(e^{\delta^{-1}(t-r)[\A(\delta^{1/2}k)+\i\langle \delta^{1/2}k,U\rangle]}-e^{\delta^{-1}(t-r)Q(\delta^{1/2}k)/2}\right)\mathcal F\phi_V(k).
\end{align*}

Note that $\sup_{\delta\in (0,1)}\rho^D_{\delta}(s,t)$ is a-priori finite  for the following two reasons. First, $k\mapsto \sup_{0\leq s\leq T}|\varphi^\delta_s(k)|$ decays polynomially of any order by (\ref{prop:Sdelta}) and Assumption~\ref{ass:varphi}. Second, we have the following bounds for the real and imaginary parts of  the left-hand side of (\ref{A:Taylor}). To bound the real part, we use
\begin{align}\label{QR:ineq}
\begin{split}
-C_{\ref{QR:ineq}}^{-1}|k|^2\leq \min\{Q(k),R(k)\}
\leq \max\{Q(k),R(k)\}\leq -C_{\ref{QR:ineq}}|k|^2,\quad \forall\; k\in \T^2
\end{split}
\end{align}
for some $C_{\ref{QR:ineq}}\in (0,1)$, which follows from Assumption~\ref{ass} (4) and (5). 
For the imaginary part, we set
\[
I(k)\;\defeq\; \frac{\A(k)-\A(-k)}{\i},\quad k\in \T^2,
\]
so that
\begin{align}\label{A:RI}
\A(k)+\i\langle k,U\rangle=\frac{R(k)}{2}+\i\left(\frac{I(k)}{2}+\langle k,U\rangle\right),
\end{align}
and then use the following bound from the definition (\ref{def:U}) of $U$: \begin{align}\label{I:ineq}
\left|\frac{I(k)}{2}+\langle k,U\rangle\right|\leq C_{\ref{I:ineq}}|k|^3,\quad \forall\;k\in \T^2.
\end{align}
Since $\mathcal F\phi_V \in \S(\R^2)$,
\eqref{QR:ineq}--\eqref{I:ineq} applied to $\sup_{\delta\in (0,1)}\rho^Z_{\delta}(s,t)$ shows that this supremum is also a-priori finite.

\begin{lem}\label{covar:D3}\sl
The metrics $\rho_\delta^D$ and $\rho^Z_\delta$ defined in (\ref{def:rhoD}) satisfy the following inequalities: for all $T\in (0,\infty)$, we can find $C_{\ref{rho:bdd}}>0$ depending only on $(\phi,A,T,v)$ such that 
\begin{align}\label{rho:bdd}
\begin{split}
\sup_{\delta\in (0,1)} \max\big\{\rho_\delta^D (s,t)^2,\rho_\delta^Z (s,t)^2\big\}\leq C_{\ref{rho:bdd}}|s-t|^2,\quad \forall\;0\leq s\leq t\leq T.
\end{split}
\end{align}
\end{lem}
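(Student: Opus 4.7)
The plan is to reduce both inequalities to (i) controls on the time-derivative of the ``semigroup discrepancy'' $g_\delta(u,k) := e^{\delta^{-1}u[\A(\delta^{1/2}k)+\i\langle \delta^{1/2}k,U\rangle]} - e^{\delta^{-1}uQ(\delta^{1/2}k)/2}$, and (ii) uniform-in-$\delta$ $t$-regularity of $\varphi^\delta_t(k)$. Step one is the analytic heart: using the Taylor expansion (\ref{A:Taylor}), the decomposition (\ref{A:RI}) and the bounds (\ref{QR:ineq})--(\ref{I:ineq}), together with (\ref{exp-ineq}) and the observation that $\delta^{1/2}|k|\leq\pi\sqrt{2}$ on $\delta^{-1/2}\T^2$, I establish
\[
|g_\delta(u,k)|\leq Cu|k|^2 e^{-Cu|k|^2}, \qquad |g_\delta(s-r,k)-g_\delta(t-r,k)|\leq C|s-t|\,|k|^2 e^{-C(s-r)|k|^2},
\]
the first by (\ref{exp-ineq}) applied with $|\A(\delta^{1/2}k)+\i\langle \delta^{1/2}k,U\rangle - Q(\delta^{1/2}k)/2|=O(\delta^{3/2}|k|^3)$, the second by the fundamental theorem of calculus.

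For $\rho^Z_\delta$: substituting the second bound into $I_{\ref{def:rhoM}}$ yields $|I_{\ref{def:rhoM}}|^2\leq C|s-t|^2|k|^4 e^{-2C(s-r)|k|^2}|\mathcal F\phi_V(k)|^2$; integrating in $r$ absorbs one power of $|k|^2$, and the remaining $k$-integral against $|\mathcal F\phi_V(k)|^2$ converges since $\mathcal F\phi_V\in\S(\R^2)$. Any overlap-type piece (from differing lower limits $s$ versus $t$ in the stochastic integrals) is estimated by $\int_s^t|g_\delta(t-r,k)|^2|\mathcal F\phi_V(k)|^2\,dr\leq C|s-t|^3|k|^4|\mathcal F\phi_V(k)|^2$, again giving $\leq C|s-t|^2$ after $k$-integration.

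For $\rho^D_\delta$, split $I_{\ref{def:rhoD}}$ along the diagonal:
\[
g_\delta(s-r,k)\varphi^\delta_s(k) - g_\delta(t-r,k)\varphi^\delta_t(k) = [g_\delta(s-r,k) - g_\delta(t-r,k)]\varphi^\delta_s(k) + g_\delta(t-r,k)[\varphi^\delta_s(k)-\varphi^\delta_t(k)].
\]
The first summand is handled as in the $\rho^Z_\delta$ argument by replacing $\mathcal F\phi_V$ with a Schwartz-type majorant for $\varphi^\delta_s$ obtained from Lemma~\ref{lem:varphiIBP} with enough integration by parts. The second summand is the main obstacle. The subtlety is that $\varphi^\delta_t(k)$ is defined via the non-smooth floor function $\lfloor \delta^{-1}Ut+\delta^{-1/2}z\rfloor$, and a naive Lipschitz analysis picks up a $\delta^{-1}$ blow-up from the rate of floor jumps. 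The fix is the change of variables $z\mapsto z+\delta^{-1/2}U\tau$, which factors $\varphi^\delta_\tau(k) = e^{-\i\delta^{-1/2}\langle k,U\tau\rangle}\Psi^\delta_\tau(k)$ where $\Psi^\delta_\tau(k)$ depends on $\tau$ only through the smooth shift $\phi_V(\cdot-\delta^{-1/2}U\tau)$. Direct computation gives $|\Psi^\delta_\tau(k)|$ and $|\partial_\tau\Psi^\delta_\tau(k)|$ both bounded by $C\delta^{1/2}|k|$, hence
\[
|\partial_\tau\varphi^\delta_\tau(k)| \leq \delta^{-1/2}|\langle k,U\rangle|\cdot|\Psi^\delta_\tau| + |\partial_\tau\Psi^\delta_\tau| \leq C(1+|k|^2),
\]
uniformly in $\delta\in(0,1)$, yielding the Lipschitz estimate $|\varphi^\delta_s(k)-\varphi^\delta_t(k)|\leq C(1+|k|^2)|s-t|$.

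To finish, combine this Lipschitz bound with the Schwartz-type decay of $|\varphi^\delta_\tau(k)|$ from Lemma~\ref{lem:varphiIBP} and the integrated kernel bound $\int_0^s|g_\delta(t-r,k)|^2\,dr\leq C\min(|k|^4,|k|^{-2})$ from Step 1; choosing the integration-by-parts order large enough in Lemma~\ref{lem:varphiIBP} to make the relevant $k$-integrals absolutely convergent then yields the required $C|s-t|^2$ bound for the cross term. Assembling the three pieces establishes (\ref{rho:bdd}). The main obstacle, as indicated, is the smoothness-in-$\tau$ argument for $\varphi^\delta_\tau$: the cancellation between the oscillating phase $e^{-\i\delta^{-1/2}\langle k,U\tau\rangle}$ and the discrete drift of the lattice mesh $\lfloor \delta^{-1}U\tau+\delta^{-1/2}z\rfloor$ is what rescues the uniform-in-$\delta$ Lipschitz estimate.
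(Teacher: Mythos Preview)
Your overall strategy for $\rho^Z_\delta$ and the diagonal split for $\rho^D_\delta$ are fine and match the paper's product-rule approach. Your factorization $\varphi^\delta_\tau(k)=e^{-\i\delta^{-1/2}\langle k,U\tau\rangle}\Psi^\delta_\tau(k)$ via the change of variables $z\mapsto z+\delta^{-1/2}U\tau$ is correct and is essentially the same device as the paper's direct differentiation of the representation (\ref{sum:n1}) (the paper's long computation of $\frac{\d}{\d a}\int_{\delta^{1/2}x-\delta^{-1/2}Ua}^{\,\cdots}\!\cdots$ produces exactly the operators $-U\!\cdot\!\nabla$ and $-\i\langle k,U\rangle$ that your factorization isolates). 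One small slip: $|\partial_\tau\Psi^\delta_\tau(k)|\leq C|k|$, not $C\delta^{1/2}|k|$, since $\partial_\tau$ on $\phi_V(\cdot-\delta^{-1/2}U\tau)$ brings down $\delta^{-1/2}$ which only cancels the $\delta^{1/2}$ from the bracket $[e^{\i\langle\delta^{1/2}k,\lfloor\cdot\rfloor\rangle}-e^{\i\langle k,\cdot\rangle}]$; your final bound $|\partial_\tau\varphi^\delta_\tau|\leq C(1+|k|^2)$ is nevertheless correct.

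The genuine gap is in assembling the cross term. With $|\varphi^\delta_s-\varphi^\delta_t|^2\leq C(1+|k|^4)|s-t|^2$ and your integrated bound $\int_0^s|g_\delta(t-r,k)|^2\d r\leq C\min(|k|^4,|k|^{-2})$, the $k$-integral behaves like $\int_{\delta^{-1/2}\T^2}|k|^2\,\d k\sim\delta^{-2}$ for large $|k|$, so the estimate is not uniform in $\delta$. Interpolating via $|\varphi^\delta_s-\varphi^\delta_t|^2\leq C(1+|k|^2)|s-t|\cdot\sup_\tau|\varphi^\delta_\tau|$ loses one power of $|s-t|$ and yields only $\rho^D_\delta(s,t)^2\leq C|s-t|$, short of the lemma's claim. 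Your closing phrase about ``choosing the integration-by-parts order large enough'' applies to $|\varphi^\delta_\tau|$, not to the \emph{Lipschitz increment}, and that is precisely where the decay is needed.

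The fix is already implicit in your factorization. Do \emph{not} discard the $\delta^{1/2}$ in $|g_\delta(u,k)|\leq Cu\,\delta^{1/2}|k|^3e^{-Cu|k|^2}$, and instead extract from your $\Psi^\delta_\tau$ the bound the paper actually uses: since $\Psi^\delta_\tau$ is exactly $\varphi^\delta_0$ with $\phi_V$ replaced by its translate, Lemma~\ref{lem:varphiIBP}/Assumption~\ref{ass:varphi} gives $|\Psi^\delta_\tau(k)|\leq C/(1+|k|^{10})$ uniformly in $\tau,\delta$, and the same for $\partial_\tau\Psi^\delta_\tau$ up to the extra factor $\delta^{-1/2}$ from $\partial_\tau$. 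Hence
\[
|\partial_\tau\varphi^\delta_\tau(k)|\leq \delta^{-1/2}|\langle k,U\rangle|\,|\Psi^\delta_\tau|+|\partial_\tau\Psi^\delta_\tau|\leq \frac{C\,\delta^{-1/2}}{1+|k|^{9}},
\]
which is precisely (\ref{fdelta:varphidelta}). Pairing this with the retained $\delta^{1/2}$ in $|g_\delta|$ gives $|g_\delta(t-r,k)|\,|\partial_\tau\varphi^\delta_\tau|\leq C(t-r)|k|^3/(1+|k|^{9})\leq C/(1+|k|^6)$, and now the $k$-integral is uniformly convergent and the full $|s-t|^2$ follows. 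In short, your change-of-variables route is equivalent to the paper's once you keep track of the $\delta^{\pm 1/2}$ on both sides; the bound $C(1+|k|^2)$ you wrote down is true but throws away exactly the cancellation you need.
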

\begin{proof}[\bf Proof]  
The proof  is stemmed from the following consequence of \eqref{exp-ineq}. Given $a\in [s,t]$, $r\in [0,s]$ and
 functions $A_\delta(k)$, $B_\delta(k)$  and $f_\delta(a,k)$, we have
 \begin{align}
&\;\left|\frac{\d}{\d a}\left[(e^{\delta^{1/2}(a-r)A_\delta(k)}-e^{\delta^{1/2}(a-r)B_\delta(k)})f_\delta(a,k)\right]\right|\notag\\
\begin{split}\label{der:3}
\leq &\;\left(\big|\delta^{1/2}A_\delta(k)e^{\delta^{1/2}(a-r)A_\delta(k)}\big|+\big|\delta^{1/2}B_\delta(k)e^{\delta^{1/2}(a-r)B_\delta(k)}\big|\right)\big|f_\delta(a,k)\big|\\
&\;+\max\big\{|e^{\delta^{1/2}(a-r)A_\delta(k)}|,|e^{\delta^{1/2}(a-r)B_\delta(k)}|\big\}\\
&\;\times  
(a-r)|A_\delta(k)-B_\delta(k)|    \cdot |\delta^{1/2}f'_\delta(a,k)|,
\end{split}
\end{align}
where $f_\delta'(a,k)= (\partial/\partial a) f_\delta(a,k)$.

We prove the bound for $\sup_{\delta\in (0,1)} \rho_\delta^D (s,t)^2$ first. In this case, we apply (\ref{der:3}) with the following choice of functions in $k\in \delta^{-1/2}\T^2$:
\begin{align}
\begin{split}
A_\delta(k)=&\;\delta^{-1}[\A(-\delta^{1/2}k)-\i\langle \delta^{1/2}k,U\rangle],\\
B_\delta(k)=&\;\delta^{-1}Q(\delta^{1/2}k)/2,\\
f_\delta(a,k)=&\;\varphi^\delta_a(k).\label{fdelta:particular}
\end{split}
\end{align}
The real part of $A_\delta(k)$ is $\delta^{-1}R(-\delta^{1/2}k)/2$. 
The two functions $A_\delta$ and $B_\delta$ take values in $\mathbb  C_-=\{\zeta\in \mathbb  C;\Re(\zeta)\leq 0\}$ by Assumption~\ref{ass} (4) and (5),
 and $A_\delta $ satisfies the following growth conditions by (\ref{QR:ineq}) and (\ref{I:ineq}): for all $k\in \delta^{-1/2}\T^2$,
\begin{align}
\label{der:4-1}
-C_{\ref{QR:ineq}}^{-1}|k|^2\leq &\; \Re A_\delta(k)\leq -C_{\ref{QR:ineq}}|k|^2,\\ 
|A_\delta(k)|\leq &\; C_{\ref{der:4-2}}\big(\delta^{-1}|\delta^{1/2}k|^2+\delta^{-1}|\delta^{1/2}k|^3\big)=C_{\ref{der:4-2}}\big(|k|^2+\delta^{1/2}|k|^3\big),\label{der:4-2}
\end{align}
where $C_{\ref{der:4-2}}=\max\{C_{\ref{QR:ineq}}^{-1},C_{\ref{I:ineq}}\}$ depends only on $A$.

To bound $\delta^{1/2}f'_\delta(a,k)$ in the last term of (\ref{der:3}), we turn to the representation of $\varphi^\delta_a(k)$ chosen in (\ref{sum:n1}) of Assumption~\ref{ass:varphi}. Then consider the following derivative: for $x\in \Z^2$, $k\in \delta^{-1/2}\T^2$ and a $\C^1$-function $\Phi(b,z)$ on $\R\times \R^2$, we have
\begin{align}
&\;\frac{\d}{\d a}\int_{\delta^{1/2}x_1-\delta^{-1/2}U_1a}^{\delta^{1/2}x_1-\delta^{-1/2}U_1a+\delta^{1/2}}\d z_1\int_{\delta^{1/2}x_2-\delta^{-1/2}U_2a}^{\delta^{1/2}x_2-\delta^{-1/2}U_2a+\delta^{1/2}}\d z_2\notag\\
&\;\times e^{\i \langle k, \delta^{1/2}x-\delta^{-1/2}Ua\rangle}\Phi(\delta^{-1/2}a,z)\notag\\
=&\;-\delta^{-1/2}U_1\int_{\delta^{1/2}x_2-\delta^{-1/2}U_2a}^{\delta^{1/2}x_2-\delta^{-1/2}U_2a+\delta^{1/2}}\d z_2e^{\i \langle k, \delta^{1/2}x-\delta^{-1/2}Ua\rangle}\notag\\
&\;\times \big[\Phi\big(\delta^{-1/2}a,\delta^{1/2}x_1-\delta^{-1/2}U_1a+\delta^{1/2},z_2\big)-\Phi\big(\delta^{-1/2}a,\delta^{1/2}x_1-\delta^{-1/2}U_1a,z_2\big)\big]\notag\\
&\;-\delta^{-1/2}U_2\int_{\delta^{1/2}x_1-\delta^{-1/2}U_1a}^{\delta^{1/2}x_1-\delta^{-1/2}U_1a+\delta^{1/2}}\d z_1e^{\i \langle k, \delta^{1/2}x-\delta^{-1/2}Ua\rangle}\notag\\
&\;\times \big[\Phi\big(\delta^{-1/2}a,z_1,\delta^{1/2}x_2-\delta^{-1/2}U_2a+\delta^{1/2}\big)-\Phi\big(\delta^{-1/2}a,z_1,\delta^{1/2}x_2-\delta^{-1/2}U_2a\big)\big]\notag\\
&\;-\delta^{-1/2} \i\langle k,U\rangle\int_{\delta^{1/2}x_1-\delta^{-1/2}U_1a}^{\delta^{1/2}x_1-\delta^{-1/2}U_1a+\delta^{1/2}}\d z_1\int_{\delta^{1/2}x_2-\delta^{-1/2}U_2a}^{\delta^{1/2}x_2-\delta^{-1/2}U_2a+\delta^{1/2}}\d z_2\notag\\
&\;\times e^{\i \langle k, \delta^{1/2}x-\delta^{-1/2}Ua\rangle}\Phi(\delta^{-1/2}a,z_1,z_2)\notag\\
&\;+\delta^{-1/2}\int_{\delta^{1/2}x_1-\delta^{-1/2}U_1a}^{\delta^{1/2}x_1-\delta^{-1/2}U_1a+\delta^{1/2}}\d z_1\int_{\delta^{1/2}x_2-\delta^{-1/2}U_2a}^{\delta^{1/2}x_2-\delta^{-1/2}U_2a+\delta^{1/2}}\d z_2\notag\\
&\;\times e^{\i \langle k, \delta^{1/2}x-\delta^{-1/2}Ua\rangle}\partial_b\Phi(\delta^{-1/2}a,z)\notag\\
=&\;\delta^{-1/2}\int_{\delta^{1/2}x_1-\delta^{-1/2}U_1a}^{\delta^{1/2}x_1-\delta^{-1/2}U_1a+\delta^{1/2}}\d z_1\int_{\delta^{1/2}x_2-\delta^{-1/2}U_2a}^{\delta^{1/2}x_2-\delta^{-1/2}U_2a+\delta^{1/2}}\d z_2\notag\\
&\;\times e^{\i \langle k, \delta^{1/2}x-\delta^{-1/2}Ua\rangle}\big(-U_1\partial_1-U_2\partial_2-\i\langle k,U\rangle+\partial_b\big)\Phi(\delta^{-1/2}a,z),\notag
\end{align}
where $\partial_j=\partial/\partial z_j$. Then by the last equality, (\ref{prop:Sdelta}) and 
the choice of $f_\delta(a,k)=\varphi^\delta_a(k)$ in (\ref{fdelta:particular}) represented according to (\ref{sum:n1}), we deduce
that
\begin{align}
&\sup_{\delta\in (0,1)}\sup_{a\in [0,T]}|\delta^{1/2} f'_\delta(a,k)|
\leq\frac{C_{\ref{fdelta:varphidelta}}}{1+|k|^{9}},\quad \forall\;k\in \delta^{-1/2}\T^2,\label{fdelta:varphidelta}
\end{align}
for some constant $C_{\ref{fdelta:varphidelta}}$ depending only on $(\phi,A)$. 

We are ready to prove the bound in (\ref{rho:bdd}) for the metric $\rho^D$ defined by (\ref{def:rhoD}). We
apply (\ref{der:4-1}), (\ref{der:4-2}) and  (\ref{fdelta:varphidelta}) to
(\ref{der:3}) and then use the mean-value theorem. By (\ref{def:rhoD}),
this leads to
\begin{align}\label{rhoD:bdd}
\begin{split}
\sup_{\delta\in (0,1)}\rho_\delta^D(s,t)^2
\leq v |s-t|^2 \int_0^s \d r\int_{\R^2}\d k \frac{C_{\ref{rhoD:bdd}}}{(1+|k|^{6})^2},\quad \forall\;0\leq s\leq t\leq T,
\end{split}
\end{align}
for some constant $C_{\ref{rhoD:bdd}}$ depending only on $(\phi,A,T)$. 
The required inequality in (\ref{rho:bdd}) for $\rho^D$ follows.

The bound for $\sup_{\delta\in (0,1)} \rho^Z_\delta(s,t)^2$ in (\ref{rho:bdd}) can be obtained by a simpler argument  if we use (\ref{def:rhoM}), since $\mathcal F\phi_V$ is in place of the  functions $\varphi^\delta_s$ and $\varphi^\delta_t$ in (\ref{def:rhoD}). The proof is complete.
\end{proof}

\begin{prop}\label{prop:DM1}\sl
The processes $D^{\delta,3}(\phi)$ and $Z^{\delta,c,2}(\phi)$ defined in (\ref{def:Ddelta3}) and (\ref{def:Mdeltac2}) converge in distribution to zero in the space of probability measures on $C(\R_+,\R)$ as $\delta\to 0+$.
\end{prop}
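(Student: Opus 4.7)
Both $D^{\delta,3}(\phi)$ and $Z^{\delta,c,2}(\phi)$ are centered real-valued Gaussian processes with continuous paths that vanish at $t=0$. Convergence in distribution to the zero process in $C(\R_+,\R)$ is therefore equivalent to tightness together with pointwise convergence of the variances to zero, since $\Cov[X_s;X_t]\le \sqrt{\Var X_s\Var X_t}$ forces the full covariance structure to collapse once variances do, and zero-mean Gaussians with vanishing covariances converge to the constant $0$.

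The plan for \emph{tightness} is to upgrade the $L^2$ modulus of continuity supplied by Lemma~\ref{covar:D3} to a fourth-moment bound. Because centered Gaussians satisfy $\E[X^4]=3(\E X^2)^2$, (\ref{rho:bdd}) immediately gives $\sup_\delta\E[|D^{\delta,3}_s(\phi)-D^{\delta,3}_t(\phi)|^4]\le 3C_{\ref{rho:bdd}}^2|s-t|^4$, and similarly for $Z^{\delta,c,2}(\phi)$. Combined with the vanishing of both processes at $t=0$, Kolmogorov's criterion \cite[Theorem~XIII.1.8]{RY} then yields tightness on $C(\R_+,\R)$.

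The plan for \emph{convergence of variances} is an application of dominated convergence to the integrals defining $\E[|D^{\delta,3}_t(\phi)|^2]$ and $\E[|Z^{\delta,c,2}_t(\phi)|^2]$. Write $A_\delta(k)=\delta^{-1}[\widehat A(\delta^{1/2}k)+\i\langle \delta^{1/2}k,U\rangle]$ and $B_\delta(k)=\delta^{-1}Q(\delta^{1/2}k)/2$. The Taylor expansion (\ref{A:Taylorintro}) together with the definition (\ref{def:U}) of $U$ forces $A_\delta(k)-B_\delta(k)=\mathcal O(\delta^{1/2}|k|^3)$ for $|\delta^{1/2}k|$ bounded, so that $e^{(t-r)A_\delta(k)}-e^{(t-r)B_\delta(k)}\to 0$ pointwise. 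Uniform domination comes from (\ref{QR:ineq}): both $|e^{(t-r)A_\delta(k)}|$ and $|e^{(t-r)B_\delta(k)}|$ are bounded by $e^{-C_{\ref{QR:ineq}}(t-r)|k|^2}$. For $Z^{\delta,c,2}_t(\phi)$, integrating over $r\in[0,t]$ turns this into a factor $\min(t,C|k|^{-2})$, which is integrable against $|\mathcal F\phi_V(k)|^2$ because $\mathcal F\phi_V\in\S(\R^2)$; dominated convergence closes the argument.

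The one step I expect to be nontrivial is the dominating function for $\varphi^\delta_t(k)$ needed to run the same argument for $D^{\delta,3}_t(\phi)$: the floor function in (\ref{def:varphi}) prevents a direct integration-by-parts bound, and without polynomial decay uniform in both $\delta$ and $t$ the dominated-convergence strategy collapses. Here I would invoke the semi-discrete integration by parts of Lemma~\ref{lem:varphiIBP} applied with the partition of Assumption~\ref{ass:varphi} (trivial power $n_1=0$ on the bounded region $\Gamma_1=[-1,1]^2$ and $n_\ell=10$ on the remaining rectangles where some $|k_{j_\ell}|$ stays bounded away from zero), and combine it with the Jordan-type lower bound $|\mathbb S_\delta(k_j)|\ge \tfrac{2}{\pi}|k_j|$ from (\ref{prop:Sdelta}) to obtain $\sup_{\delta\in(0,1)}\sup_{t\in[0,T]}|\varphi^\delta_t(k)|\le C(\phi,A)/(1+|k|^{10})$. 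Pointwise convergence $\varphi^\delta_t(k)\to 0$ then follows because $\lf \delta^{-1}Ut+\delta^{-1/2}z\rf-\delta^{-1}Ut-\delta^{-1/2}z=\mathcal O(1)$ and the prefactor $\delta^{1/2}k$ kills this error, so both terms in (\ref{def:varphi}) share the same limit $\mathcal F\phi_V(k)$. With this majorant in hand, the dominated-convergence argument for $Z^{\delta,c,2}_t(\phi)$ carries over verbatim to $D^{\delta,3}_t(\phi)$, completing the proof.
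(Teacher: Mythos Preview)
Your proposal is correct and follows essentially the paper's route: tightness via Kolmogorov's criterion (Lemma~\ref{covar:D3} plus the Gaussian fourth-moment identity $\E[X^4]=3(\E[X^2])^2$) together with pointwise $L_2(\P)$-convergence to zero by dominated convergence, using (\ref{exp-ineq}), (\ref{QR:ineq}) and (\ref{I:ineq}). The uniform polynomial decay of $\varphi^\delta_t$ that you single out as the nontrivial step is exactly what the paper records (via (\ref{prop:Sdelta}) and Assumption~\ref{ass:varphi}) just before Lemma~\ref{covar:D3}; your additional observation $\varphi^\delta_t(k)\to 0$ is correct but unnecessary here, since the exponential difference $e^{(t-r)A_\delta(k)}-e^{(t-r)B_\delta(k)}$ already vanishes pointwise and the majorant alone suffices for dominated convergence.
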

\begin{proof}[\bf Proof]
By dominated convergence, 
it follows from  (\ref{exp-ineq}), (\ref{QR:ineq}) and (\ref{I:ineq})  that $D^{\delta,3}_t(\phi)$ and $Z_t^{\delta,c,2}(\phi)$ converge to zero in $L_2(\P)$ for all $t\in \R_+$. We also have the weak compactness of the laws of $\{D^{\delta,3}(\phi)\}_{\delta\in (0,1)}$ and $\{Z^{\delta,c,2}(\phi)\}_{\delta\in (0,1)}$  by Kolmogorov's criterion \cite[Theorem~XIII.1.8]{RY} and the uniform modulus of continuity on compacts by Lemma~\ref{covar:D3}. The asserted convergences to zero then follow from 
\cite[Theorem~3.7.8 (b)]{EK}. 
\end{proof}

By (\ref{def:Mdec}) and
Propositions~\ref{prop:M1} and~\ref{prop:DM1}, we have proved the tightness of the laws of $\{Z^{\delta,c}(\phi)\}_{\delta\in (0,1)}$ in the space of probability measures on $C(\R_+,\R)$.

\subsection{Removal of remainders: bounding convolution-like stochastic integrals}\label{sec:DM2}
In this subsection, we prove that $D^{\delta,1}(\phi)$ and $D^{\delta,2}(\phi)$ converge weakly to zero as processes (Propositions~\ref{prop:D2} and~\ref{prop:D3}). These together with Proposition~\ref{prop:DM1} will prove the weak convergence of $D^\delta(\phi)$ to zero as $\delta\to 0+$ according to the decomposition in  (\ref{def:Ddec}).

\begin{ass}[Choice of auxiliary exponents]\label{ass:pq}\rm
Let  $(p_1,q_1)$ and $(p_2,q_2)$ be two pairs of H\"older conjugates such that
\begin{align}\label{pq:ineq}
\frac{1}{2}>1+\frac{p_1-1}{p_1}-\frac{1}{p_2}.
\end{align}
(For example, we can choose $p_1$ sufficiently close to $ 1+$ and $p_2\in (1,2]$ to satisfy (\ref{pq:ineq}).)
Hence, we can choose $a\in (0,\tfrac{1}{2})$ such that 
\begin{align}\label{choice:a}
p_2\left(a-1-\frac{p_1-1}{p_1}\right)>-1.
\end{align}
We fix $(p_1,q_1)$, $(p_2,q_2)$ and $a$ chosen in this way throughout the present subsection. \mbox{}
\hfill $\blacksquare$
\end{ass}

We  start with a slightly more general framework and bound expectations of the following form in the next few lemmas: for $\delta\in (0,1)$,
\begin{align}\label{sup_v}
\E\left[\sup_{t\in [0,T]\cap \mathbb  Q}\left|\int_0^t\int_{\delta^{-1/2}\T^2}e^{(t-r)Q(k)/2}v_t(k) W(\d r,\d k)\right|\right],
\end{align}
where $W(\d r,\d k)$ is a space-time white noise on $\R_+\times \R^2$. The proofs of these preliminary results use  the standard factorization method (cf. \cite[Section~5.3.1]{DPZ}) and a factorization of Brownian transition densities. 
We write $(q_t(w_1,w_2))_{t>0}$ for the transition densities of a centered two-dimensional Brownian motion with covariance matrix $-Q$ (chosen in Assumption~\ref{ass} (4)) and $q_t(w)=q_t(0,w)$. Then for  Borel measurable functions $(s,w_1)\mapsto v(s,w_1):\R_+\times \R\to \R$ and $v:\delta^{-1/2}\T^2\to \R$, we define two integral operators $J^{a-1}$ and $J^{-a}$: for $s,t\in \R_+$ and $w_1\in \R^2$,
\begin{align}
J^{a-1}v(t)\;\defeq &\;\frac{\sin(\pi a)}{\pi}\int_0^t\d s \int_{\R^2}\d w_1(t-s)^{a-1}q_{t-s}(w_1)v(s,w_1)\label{def:J1},\\
J^{-a} v(s,w_1)\;\defeq &\;\int_0^s\int_{\delta^{-1/2}\T^2}(s-r)^{-a}e^{\i \langle k,w_1\rangle+(s-r)Q(k)/2}v(k) W(\d r,\d k)\label{def:J2-0}\\
\begin{split}
=\,&\;\int_0^s\int_{\delta^{-1/2}\T^2}(s-r)^{-a}\left(\int_{\R^2}\d w_2q_{s-r}(w_1,w_2)e^{\i \langle k,w_2\rangle}\right)\\
&\;\times v(k)  W(\d r,\d k).\label{def:J2}
\end{split}
\end{align}
See also \cite{DPZ} and \cite[Appendix~A]{MPS} for these integral operators.

\begin{lem}\label{lem:J1}\sl
Let $a$ be chosen as in Assumption~\ref{ass:pq}.
For $v\in L_2(\delta^{-1/2}\T^2,dk)$, $J^{-a}v(s,w_1)$ and $J^{a-1}J^{-a}v(t)$ are well-defined integrals and we have
\begin{align}\label{factorization}
J^{a-1}J^{-a}v(t)
=&  \int_0^{t}\int_{\delta^{-1/2}\T^2}e^{ (t-r)Q(k)/2}v(k) W(\d r,\d k).
\end{align}
\end{lem}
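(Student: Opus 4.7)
The plan is to apply the standard factorization method from Da Prato--Zabczyk, tailored to the Fourier-side description of $J^{-a}v$ and $J^{a-1}$ given in the paper. The core algebraic inputs are the Beta integral identity
\begin{equation*}
\int_r^t (t-s)^{a-1}(s-r)^{-a}\, \d s = B(a,1-a)=\frac{\pi}{\sin(\pi a)},\qquad 0<a<1,
\end{equation*}
and the Chapman--Kolmogorov semigroup property of $(q_t)$, which on the Fourier side says $\int_{\R^2}\d w\, q_{t-s}(w)\, e^{\i\langle k,w\rangle}=e^{(t-s)Q(k)/2}$.

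First I would verify the well-definedness of $J^{-a}v(s,w_1)$ for every $(s,w_1)$ and $v\in L_2(\delta^{-1/2}\T^2,\d k)$. Using the Fourier-side form \eqref{def:J2-0} together with It\^o's isometry, and the elementary bound $|e^{(s-r)Q(k)/2}|\leq 1$ coming from Assumption~\ref{ass}(4),
\begin{equation*}
\E\bigl|J^{-a}v(s,w_1)\bigr|^2 \leq \int_0^s (s-r)^{-2a}\d r\cdot \int_{\delta^{-1/2}\T^2}|v(k)|^2\,\d k \leq \frac{s^{1-2a}}{1-2a}\|v\|_{L_2}^2,
\end{equation*}
which is finite precisely because $a<\tfrac12$ (Assumption~\ref{ass:pq}). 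A parallel computation using Cauchy--Schwarz against $q_{t-s}(w_1)$ and $(t-s)^{a-1}$ controls $\int_0^t\d s\int_{\R^2}\d w_1\,(t-s)^{a-1}q_{t-s}(w_1)\,\E|J^{-a}v(s,w_1)|$, which will both justify convergence of $J^{a-1}J^{-a}v(t)$ and supply the $L_2$ dominations needed for the stochastic Fubini steps below.

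Next I would substitute the Fourier-side form \eqref{def:J2-0} of $J^{-a}v(s,w_1)$ into the definition \eqref{def:J1} of $J^{a-1}$. A first application of stochastic Fubini swaps $\int_{\R^2}\d w_1\,q_{t-s}(w_1)$ with the stochastic integral, producing the factor $\int_{\R^2}\d w_1\,q_{t-s}(w_1)e^{\i\langle k,w_1\rangle}=e^{(t-s)Q(k)/2}$; combined with $e^{(s-r)Q(k)/2}$ this gives $e^{(t-r)Q(k)/2}$. A second application of stochastic Fubini pulls $\int_r^t(t-s)^{a-1}(s-r)^{-a}\,\d s$ outside the stochastic integral, and the Beta identity collapses that deterministic factor to $\pi/\sin(\pi a)$, cancelling the prefactor in \eqref{def:J1} and yielding exactly the right-hand side of \eqref{factorization}.

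The only genuinely technical point is justifying the two stochastic Fubini exchanges; the main obstacle is the integrable-but-singular factors $(t-s)^{a-1}$ and $(s-r)^{-a}$ together with the $L_2(\d w_1)$-unboundedness of $q_{t-s}$ as $s\to t$. I would handle this by checking the Walsh/Da~Prato--Zabczyk hypothesis (finiteness of the joint $L_1$ expectation of the absolute value of the integrand), using Cauchy--Schwarz in $w_1$ to split $q_{t-s}\in L_2(\d w_1)$ with $\|q_{t-s}\|_{L_2}\lesssim (t-s)^{-1/2}$ against the Parseval-type bound $\|J^{-a}v(s,\cdot)\|_{L_2(\d w_1)}$-type quantity controlled by $s^{1/2-a}\|v\|_{L_2}$. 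The condition $p_2(a-1-(p_1-1)/p_1)>-1$ from \eqref{choice:a} is exactly what makes a H\"older pairing between $(t-s)^{a-1}$ and the $(t-s)^{-1/2}$ Gaussian blow-up integrable near $s=t$, so the Fubini hypotheses hold and the calculation above is rigorous.
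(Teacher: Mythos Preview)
Your approach is essentially the same as the paper's: both use the Beta integral, Chapman--Kolmogorov (equivalently, the Fourier identity $\int_{\R^2} q_{t-s}(w_1)e^{\i\langle k,w_1\rangle}\,\d w_1 = e^{(t-s)Q(k)/2}$), and stochastic Fubini. The paper works from the alternative form \eqref{def:J2} of $J^{-a}v$ and performs a single Fubini swap, after which the spatial integrals collapse by \eqref{CK2} to the $s$-independent factor $e^{(t-r)Q(k)/2}$ and the $s$-integral reduces to the Beta identity; your two-step Fubini reaches the same endpoint.

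One correction to your justification of stochastic Fubini: the Cauchy--Schwarz/H\"older splitting in $w_1$ does not work as written, and invoking \eqref{choice:a} here is misplaced. Since $|e^{\i\langle k,w_1\rangle}|=1$ in \eqref{def:J2-0}, the bound $\E|J^{-a}v(s,w_1)|^2\le \frac{s^{1-2a}}{1-2a}\|v\|_{L_2}^2$ is \emph{constant} in $w_1$, so $w_1\mapsto J^{-a}v(s,w_1)$ is not in any unweighted $L_{q_1}(\R^2,\d w_1)$ and there is no ``Parseval-type'' $L_2(\d w_1)$ control to pair against $\|q_{t-s}\|_{L_2}\sim (t-s)^{-1/2}$. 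The correct and much simpler observation is that $q_{t-s}$ is a probability density, so $\int_{\R^2} q_{t-s}(w_1)\,\d w_1=1$, and your uniform-in-$w_1$ bound already gives the Walsh hypothesis via
\[
\int_0^t\!\!\int_{\R^2}(t-s)^{a-1}q_{t-s}(w_1)\Big(\E|J^{-a}v(s,w_1)|^2\Big)^{1/2}\d w_1\,\d s
\;\le\; C\|v\|_{L_2}\int_0^t (t-s)^{a-1}s^{1/2-a}\,\d s<\infty,
\]
which uses only $a\in(0,\tfrac12)$. Condition \eqref{choice:a} plays no role in this lemma; it first enters in Lemma~\ref{lem:J2}.
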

\begin{proof}[\bf Proof]
By the Chapman-Kolmogorov equation, we can write
\begin{align}
&e^{(t-r)Q(k)/2}=\int_{\R^2}\d wq_{t-r}(w)e^{\i\langle k,w\rangle}\label{CK1}\\
=&\int_{\R^2}\d w_1q_{t-s}(w_1)\int_{\R^2}\d w_2q_{s-r}(w_1,w_2)e^{\i\langle k,w_2\rangle},\quad \forall\;0< r< s< t.\label{CK2}
\end{align}
Note that (\ref{CK1}) gives
\begin{align}
\begin{split}
&\;\int_0^t\int_{\delta^{-1/2}\T^2}e^{(t-r)Q(k)/2}v(k) W(\d r,\d k)\\
= &\;\int_0^t\int_{\delta^{-1/2}\T^2}\left(\int_{\R^2}\d w e^{\i \langle k,w\rangle}q_{t-r}(w)\right)v(k) W(\d r,\d k).\label{fact}
\end{split}
\end{align}
On the other hand, it follows from (\ref{def:J1}) and (\ref{def:J2}) that
\begin{align}
J^{a-1}J^{-a}v(t)
=&\;\frac{\sin (\pi a)}{\pi}\int_0^t \d s\int_{\R^2}\d w_1(t-s)^{a-1}q_{t-s}(w_1)\notag\\
&\;\times \int_0^s\int_{\delta^{-1/2}\T^2}(s-r)^{-a} \left(\int_{\R^2}\d w_2q_{s-r}(w_1,w_2)e^{\i \langle k,w_2\rangle}\right)v(k) W(\d r,\d k)\notag\\
=&\;\frac{\sin (\pi a)}{\pi}\int_0^t\int_{\delta^{-1/2}\T^2}\left(\int_r^t \d s(t-s)^{a-1}(s-r)^{-a}\right)\notag\\
&\;\times \left(\int_{\R^2}\d w_1 q_{t-s}(w_1)\int_{\R^2}\d w_2q_{s-r}(w_1,w_2)e^{\i \langle k,w_2\rangle}\right)v(k)  W(\d r,\d k)\notag\\
=&\;  \int_0^t\int_{\delta^{-1/2}\T^2}\left(\int_{\R^2}\d w e^{\i \langle k,w\rangle}q_{t-r}(w)\right)v(k) W(\d r,\d k),\label{factorization0}
\end{align}
where the second equality follows from the stochastic Fubini theorem (see \cite[Theorem~2.6 on page 296]{Walsh}) and the third equality follows from the identity:
\[
\int_r^t\d s (t-s)^{\alpha-1}(s-r)^{-\alpha}=\frac{\pi}{\sin (\pi\alpha)},\quad \forall\; 0\leq r\leq t,\;\alpha\in (0,1)
\]
and 
(\ref{CK2}).
The last term in (\ref{factorization0}) is the same as the right-hand side of (\ref{fact}), and so the required identity (\ref{factorization}) is proved.
 \end{proof}

The next two lemmas give bounds for $J^{a-1}$.

\begin{lem}\label{lem:J2}\sl 
Let $(p_1,q_1)$, $(p_2,q_2)$ and $a$ be chosen in Assumption~\ref{ass:pq}.
For any $T,\lambda\in (0,\infty)$ and Borel measurable function $(s,w_1)\mapsto v(s,w_1)$ such that (\ref{def:J1}) converges absolutely for every $t\in [0,T]$, we have
\begin{align}\label{ineq:Ja-1}
\begin{split}
|J^{a-1}v(t)| 
\leq C_{\ref{ineq:Ja-1}}\left(\int_0^t\d s \left(\int_{\R^2}\d w_1 |v(s,w_1)|^{q_1}e^{-q_1\lambda |w_1|}\right)^{q_2/q_1}\right)^{1/q_2},\quad \forall\; t\in [0,T],
\end{split}
\end{align}
where the constant $C_{\ref{ineq:Ja-1}}$ depends only on $(p_1,p_2,a)$, $T$ and $\lambda$.
\end{lem}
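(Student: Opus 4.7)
The plan is to apply H\"older's inequality twice: first in the spatial variable $w_1$ (with exponents $(p_1,q_1)$) to separate the Gaussian kernel from $v$, and then in the time variable $s$ (with exponents $(p_2,q_2)$). The exponential weight $e^{-q_1\lambda|w_1|}$ that appears on the right-hand side is produced precisely because we must pair the weight $e^{\lambda|w_1|}$ with the Gaussian kernel to keep its $L^{p_1}$ norm finite.

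First I would split the integrand in the definition of $J^{a-1}v(t)$ as
\[
q_{t-s}(w_1)|v(s,w_1)|=\bigl(q_{t-s}(w_1)e^{\lambda|w_1|}\bigr)\cdot\bigl(|v(s,w_1)|e^{-\lambda|w_1|}\bigr)
\]
and apply H\"older in $w_1$ to get, for each $s<t\leq T$,
\[
\int_{\R^2}q_{t-s}(w_1)|v(s,w_1)|\d w_1\leq\|q_{t-s}e^{\lambda|\cdot|}\|_{L^{p_1}(\R^2)}\cdot\|v(s,\cdot)e^{-\lambda|\cdot|}\|_{L^{q_1}(\R^2)}.
\]
Next I would bound the weighted Gaussian norm. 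Since $q_t(w)\lesssim t^{-1}e^{-c|w|^2/t}$ for some constant $c>0$ depending on $Q$, completing the square in the exponent $-p_1c|w|^2/t+p_1\lambda|w|$ and integrating in polar coordinates yields
\[
\int_{\R^2}q_{t-s}(w_1)^{p_1}e^{p_1\lambda|w_1|}\d w_1\leq C_{T,\lambda,p_1}(t-s)^{-(p_1-1)},
\]
i.e.\ $\|q_{t-s}e^{\lambda|\cdot|}\|_{L^{p_1}}\leq C(t-s)^{-(p_1-1)/p_1}$ uniformly for $s\in[0,T]$.

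Combining the two previous displays and inserting the factor $(t-s)^{a-1}$ from the definition of $J^{a-1}$ gives
\[
|J^{a-1}v(t)|\leq C\int_0^t(t-s)^{a-1-(p_1-1)/p_1}\,\|v(s,\cdot)e^{-\lambda|\cdot|}\|_{L^{q_1}}\,\d s.
\]
A final application of H\"older in $s$ with exponents $(p_2,q_2)$ then produces the claimed right-hand side of (\ref{ineq:Ja-1}), multiplied by the time factor
\[
\Bigl(\int_0^t(t-s)^{p_2(a-1-(p_1-1)/p_1)}\,\d s\Bigr)^{1/p_2}.
\]

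The main obstacle, and the only reason for the precise form of Assumption~\ref{ass:pq}, is to ensure this last time factor is finite and bounded on $[0,T]$. That is exactly the inequality $p_2(a-1-(p_1-1)/p_1)>-1$, which is condition (\ref{choice:a}); condition (\ref{pq:ineq}) guarantees that such an $a\in(0,\tfrac12)$ exists. Granting this, the factor is bounded by a constant depending only on $(p_1,p_2,a,T)$, and the absolute convergence of (\ref{def:J1}) needed to justify Fubini in the spatial H\"older step is a consequence of the same estimates applied to $|v|$. The overall constant $C_{\ref{ineq:Ja-1}}$ then depends only on $(p_1,p_2,a,T,\lambda)$, as claimed.
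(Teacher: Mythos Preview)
Your proof is correct and follows essentially the same route as the paper: split off the weight $e^{\lambda|w_1|}$, apply H\"older in $w_1$ with exponents $(p_1,q_1)$, use the bound $\|q_{t-s}e^{\lambda|\cdot|}\|_{L^{p_1}}\leq C(t-s)^{-(p_1-1)/p_1}$, and then apply H\"older in $s$ with exponents $(p_2,q_2)$, invoking (\ref{choice:a}) to make the resulting time integral finite. The paper's argument is identical in structure, only terser in justifying the weighted Gaussian bound.
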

\begin{proof}[\bf Proof]
In this proof, we write $C$ for a constant depending only on $(p_1,p_2,a)$, $T$ and $\lambda$, which may change from line to line.
By the definition of $J^{a-1}v(t)$ in (\ref{def:J1}), it holds that
\begin{align*}
|J^{a-1}v(t)|\leq &\;C\int_0^t\d s(t-s)^{a-1} \int_{\R^2}\d w_1q_{t-s}(w_1)e^{\lambda|w_1|}\cdot |v(s,w_1)|e^{-\lambda|w_1|}\\
\leq &\;C\int_0^t \d s (t-s)^{a-1} \left(\int_{\R^2}\d w_1q_{t-s}(w_1)^{p_1}e^{p_1\lambda|w_1|}\right)^{1/p_1}\\
&\;\times \left(\int_{\R^2}\d w_1 |v(s,w_1)|^{q_1}e^{-q_1\lambda |w_1|}\right)^{1/q_1}\\
\leq &\;C\int_0^t \d s (t-s)^{a-1-\frac{p_1-1}{p_1}}\left(\int_{\R^2}\d w_1 |v(s,w_1)|^{q_1}e^{-q_1\lambda |w_1|}\right)^{1/q_1}\\
\leq &\;C\left(\int_0^t \d s (t-s)^{p_2\big(a-1-\frac{p_1-1}{p_1}\big)}\right)^{1/p_2}\\
&\;\times \left(\int_0^t\d s \left(\int_{\R^2}\d w_1 |v(s,w_1)|^{q_1}e^{-q_1\lambda |w_1|}\right)^{q_2/q_1}\right)^{1/q_2}\\
\leq &\; C\left(\int_0^t\d s \left(\int_{\R^2}\d w_1 |v(s,w_1)|^{q_1}e^{-q_1\lambda |w_1|}\right)^{q_2/q_1}\right)^{1/q_2},
\end{align*}
where the second and last inequalities follow from H\"older's inequality and the last inequality  also uses (\ref{choice:a}) so that the first integral on its left-hand side is finite. The last inequality proves (\ref{ineq:Ja-1}). 
\end{proof}

Lemma~\ref{lem:J2} will be used in the following form.

\begin{lem}\label{lem:J3}\sl 
Let $(p_1,q_1)$, $(p_2,q_2)$ and $a$ be chosen in Assumption~\ref{ass:pq}.
Fix $T,\lambda\in (0,\infty)$ and a jointly measurable function 
\[
(s,t,w_1,\omega)\mapsto v_t(s,w_1)(\omega):\R_+\times \R_+\times \R^2\times \Omega\to \R
\]
such that, for $\P$-a.s. $\omega$, the function $(s,w_1)\mapsto v_t(s,w_1)(\omega)$ is absolutely integrable under $J^{a-1}$ for every $t\in [0,T]$. Then we have
\begin{align}\label{J3}
\begin{split}
&\E \left[\sup_{t\in [0, T]\cap \mathbb  Q}|J^{a-1}v_t(t)|\right]\\
&\leq 
 C_{\ref{J3}}\left(\int_0^T\d s\int_{\R^2}\d w_1\E\left[\sup_{t\in [s,T]\cap \mathbb  Q}|v_t(s,w_1)|^{q^\star}\right]e^{-q^\star\lambda|w_1|}\right)^{1/q^\star}, 
 \end{split}
\end{align} 
where $q^\star=\max\{q_1,q_2\}$ and the constant $C_{\ref{J3}}$ depends only on $T$, $\lambda$ and $(p_1,p_2,a)$. 
\end{lem}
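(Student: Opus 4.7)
\medskip
\noindent\textbf{Proof plan.}
The plan is to apply Lemma~\ref{lem:J2} pathwise (for $\P$-a.s.\ $\omega$) to the deterministic function $(s,w_1)\mapsto v_t(s,w_1)(\omega)$ with some auxiliary parameter $\lambda_1\in(0,\infty)$ to be chosen, obtaining the pointwise bound
\begin{align*}
|J^{a-1}v_t(t)|\leq C_{\ref{ineq:Ja-1}}\left(\int_0^t\d s\left(\int_{\R^2}\d w_1\,|v_t(s,w_1)|^{q_1}e^{-q_1\lambda_1|w_1|}\right)^{q_2/q_1}\right)^{1/q_2}
\end{align*}
for each $t\in[0,T]\cap\mathbb{Q}$. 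Since the integrand is nonnegative, I will enlarge the time domain to $[0,T]$ and push $\sup_{t\in[0,T]\cap\mathbb{Q}}$ inside the outer $\d s$ integral, then take expectations and apply Jensen's inequality to the concave function $x\mapsto x^{1/q_2}$ (admissible because $q_2\geq 1$) to bring the expectation inside as well.

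It then remains to convert the inner expression, which involves $|v_t|^{q_1}$ weighted by $e^{-q_1\lambda_1|w_1|}$, into $\E[\sup_t|v_t|^{q^\star}]$ weighted by $e^{-q^\star\lambda|w_1|}$. I will split into two cases, reflecting the use of $q^\star=\max\{q_1,q_2\}$. When $q_2\geq q_1$, so $q^\star=q_2$ and $q_2/q_1\geq 1$, I set $\lambda_1=\lambda q_2/q_1$ and apply Jensen's inequality with the convex function $x\mapsto x^{q_2/q_1}$ on the probability measure proportional to $e^{-q_1\lambda_1|w_1|}\d w_1$ to obtain
\begin{align*}
\left(\int|v_t|^{q_1}e^{-q_1\lambda_1|w_1|}\d w_1\right)^{q_2/q_1}\leq C\int|v_t|^{q^\star}e^{-q^\star\lambda|w_1|}\d w_1,
\end{align*}
after which the supremum in $t$ and the expectation pass directly into the spatial integral. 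When $q_2\leq q_1$, so $q^\star=q_1$ and $q_2/q_1\leq 1$, I instead take $\lambda_1=\lambda$, use sup-of-integral $\leq$ integral-of-sup on the $L^{q_1}(e^{-q_1\lambda|w_1|}\d w_1)$-norm (legitimate since $x\mapsto x^{q_2/q_1}$ is monotone), apply Jensen with the concave power $x\mapsto x^{q_2/q_1}$ to bring the expectation inside, and finally employ Hölder's inequality on $[0,T]$ with conjugate exponents $q_1/q_2$ and $q_1/(q_1-q_2)$ to compare
\begin{align*}
\left(\int_0^T h(s)^{q_2/q_1}\d s\right)^{1/q_2}\leq T^{(q_1-q_2)/(q_1q_2)}\left(\int_0^T h(s)\d s\right)^{1/q_1},
\end{align*}
where $h(s)=\int\E[\sup_t|v_t(s,w_1)|^{q_1}]e^{-q_1\lambda|w_1|}\d w_1$. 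In both cases, the power of $T$ and the normalizing constants $\int e^{-q_j\lambda_1|w_1|}\d w_1$ are finite and get absorbed into $C_{\ref{J3}}$.

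The main obstacle is simply the case analysis, which is forced by the appearance of $q^\star=\max\{q_1,q_2\}$ rather than a single fixed exponent: the two cases require opposite directions of Jensen (convex versus concave power) and distinct choices of $\lambda_1$ so that the exponential weight $e^{-q_1\lambda_1|w_1|}$ lines up with the target $e^{-q^\star\lambda|w_1|}$. Measurability of the sup over the countable set $[0,T]\cap\mathbb{Q}$ and the interchange $\E\int=\int\E$ are routine consequences of the assumed joint measurability of $(s,t,w_1,\omega)\mapsto v_t(s,w_1)(\omega)$.
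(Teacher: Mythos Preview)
Your proof is correct and follows essentially the same route as the paper's: apply Lemma~\ref{lem:J2} pathwise, push the supremum inside the $\d s$-integral, move the expectation inside the outer $1/q_2$-power via Jensen/H\"older, and then split into the two cases $q_2\gtrless q_1$ to arrive at the single exponent $q^\star$. The only cosmetic difference is that the paper invokes Lemma~\ref{lem:J2} once with the fixed parameter $2\lambda$ (so that in the case $q_2/q_1\geq 1$ a H\"older splitting $e^{-2q_1\lambda|w_1|}=e^{-q_1\lambda|w_1|}\cdot e^{-q_1\lambda|w_1|}$ yields the target weight $e^{-q_2\lambda|w_1|}$), whereas you achieve the same effect by choosing the auxiliary parameter $\lambda_1=\lambda q_2/q_1$ and phrasing the step as Jensen for the convex power on a probability measure; the two are equivalent.
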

\begin{proof}[\bf Proof]
We use (\ref{ineq:Ja-1}) with $\lambda$ replaced by $2\lambda$. Writing $C=C_{\ref{ineq:Ja-1}}$, we get
\begin{align}
&\E \left[\sup_{t\in [0, T]\cap \mathbb  Q}|J^{a-1}v_t(t)|\right]\notag\\
\leq &\;C\E\left[\sup_{t\in [0,T]\cap \mathbb  Q}\left(\int_0^t\d s \left(\int_{\R^2}\d w_1 |v_t(s,w_1)|^{q_1}e^{-q_1\cdot 2\lambda |w_1|}\right)^{q_2/q_1}\right)^{1/q_2}\right]\notag\\
\leq &\;C\E\left[\left(\int_0^T\d s \left(\int_{\R^2}\d w_1 \sup_{t\in [s,T]\cap \mathbb  Q}|v_t(s,w_1)|^{q_1}e^{-q_1\cdot 2\lambda |w_1|}\right)^{q_2/q_1}\right)^{1/q_2}\right]\notag\\
\leq &
\; C
\left(\int_0^T\d s \E\left[\left(\int_{\R^2}\d w_1 \sup_{t\in [s,T]\cap \mathbb  Q}|v_t(s,w_1)|^{q_1}e^{-q_1\cdot 2\lambda |w_1|}\right)^{q_2/q_1}\right]\right)^{1/q_2},\label{Ja-1Ja}
\end{align} 
where the second inequality follows from H\"older's inequality.

We bound the right-hand side of (\ref{Ja-1Ja}) in two different ways according to $q_2/q_1<1$ or not. 
If $q_2/q_1<1$, then applying H\"older's inequality twice gives
\begin{align}
&\;\left(\int_0^T\d s \E\left[\left(\int_{\R^2}\d w_1 \sup_{t\in [s,T]\cap \mathbb  Q}|v_t(s,w_1)|^{q_1}e^{-q_1\cdot 2\lambda |w_1|}\right)^{q_2/q_1}\right]\right)^{1/q_2}\notag\\
\leq &\;
\left(\int_0^T\d s \left(\int_{\R^2}\d w_1 \E\left[\sup_{t\in [s,T]\cap \mathbb  Q}|v_t(s,w_1)|^{q_1}\right]e^{-q_1\cdot 2\lambda |w_1|}\right)^{q_2/q_1}\right)^{1/q_2}\notag\\
\leq &\;C_{\ref{Ja-1Ja!!!}}\left(\int_0^T\d s \int_{\R^2}\d w_1 \E\left[\sup_{t\in [s,T]\cap \mathbb  Q}|v_t(s,w_1)|^{q_1}\right]e^{-q_1\lambda |w_1|}\right)^{1/q_1},\label{Ja-1Ja!!!}
\end{align}
where $C_{\ref{Ja-1Ja!!!}}$ depends only on $T$ and $(p_1,p_2)$.
If $q_2/q_1\geq 1$, then we apply H\"older's inequality to the integral in (\ref{Ja-1Ja}) with respect to $w_1$ and get
\begin{align}
&\;\left(\int_0^T\d s \E\left[\left(\int_{\R^2}\d w_1 \sup_{t\in [s,T]\cap \mathbb  Q}|v_t(s,w_1)|^{q_1}e^{-q_1\cdot 2\lambda |w_1|}\right)^{q_2/q_1}\right]\right)^{1/q_2}\notag\\
\leq &\; C_{\ref{Ja-1Ja-111}}\left(\int_0^T \d s\int_{\R^2}\d w_1\E\left[\sup_{t\in [s,T]\cap \mathbb  Q}|v_t(s,w_1)|^{q_2}\right]e^{-q_2\lambda|w_1|}\right)^{1/q_2},\label{Ja-1Ja-111}
\end{align}
where $C_{\ref{Ja-1Ja-111}}$ depends only on $(p_1,p_2)$ and $\lambda$.
Then we obtain (\ref{J3})
by applying the last two inequalities to (\ref{Ja-1Ja}) and using the notation $q^\star=\max\{q_1,q_2\}$. 
\end{proof}

 \begin{prop}\label{prop:D2}\sl 
For the processes $\{D^{\delta,1}(\phi)\}_{\delta\in (0,1)}$ defined in (\ref{def:Ddec}),
$\sup_{t\in [0,T]}|D_t^{\delta,1}(\phi)|$ converge to zero in $L_1(\P)$ as $\delta\to 0+$ for all $T\in (0,\infty)$.\smallskip
 \end{prop}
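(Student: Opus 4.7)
The strategy combines a structural observation about $\varphi^{\delta,1}$ with the factorization machinery of Lemmas~\ref{lem:J1}--\ref{lem:J3}.

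First I would use the quadratic homogeneity $Q(\delta^{1/2}k)=\delta\,Q(k)$ to collapse the exponent in the defining integrand of $D^{\delta,1}_t(\phi)$ from $\delta^{-1}(t-r)Q(\delta^{1/2}k)/2$ to $(t-r)Q(k)/2$, which matches the form treated by Lemma~\ref{lem:J1}. Splitting the $\mathbb{V}\cdot\mathbb{W}$ notation of (\ref{def:VW}) into the two real stochastic integrals driven by the independent white noises $W^1,W^2$, two applications of Lemma~\ref{lem:J1} give
\[
D^{\delta,1}_t(\phi)=\sqrt{v}\,J^{a-1}\bigl(J^{-a}_1\Re\varphi^{\delta,1}+J^{-a}_2\Im\varphi^{\delta,1}\bigr)(t),
\]
where $J^{-a}_i$ is the operator of (\ref{def:J2}) driven by $W^i$.

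The decisive observation is that $\varphi^{\delta,1}$ is actually identically zero on $\R^2$. Indeed, by a single translation of variables, for any $f\in\S(\R^2)$ and $n\in\mathbb{Z}_+$,
\[
\widehat{\Delta^{n}_{\delta,j}f}(k)=\bigl(-\i\,\mathbb{S}_\delta(k_j)\bigr)^{n}\widehat{f}(k),
\]
while iterated integration by parts gives $\widehat{\partial^{n}_{j}f}(k)=(-\i\,k_j)^{n}\widehat{f}(k)$, under the convention $\widehat{g}(k)=\int g(z)e^{\i\langle k,z\rangle}\d z$. Substituting $f=\phi_V$ into (\ref{def:varphi1t}), the two terms in each bracket collapse to $(-1)^{n_\ell}\widehat{\phi_V}(k)$ and cancel; for $\ell=1$ with $n_1=0$ the cancellation is immediate without any Fourier identity. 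Hence the deterministic integrand of the stochastic integral (\ref{def:Ddelta1}) vanishes identically, so $D^{\delta,1}_t(\phi)=0$ almost surely for every $t\ge 0$, and the natural (identically zero, hence continuous) version satisfies $\sup_{t\in[0,T]}|D^{\delta,1}_t(\phi)|=0$ almost surely. The claimed $L_1(\P)$-convergence is then trivial.

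Should one prefer an argument that does not rely on this exact cancellation, the robust alternative is to apply Lemma~\ref{lem:J3} with $q^\star=\max(q_1,q_2)$ to the factorized form, use the standard Gaussian moment inequality to bound $\mathbb{E}[|J^{-a}_i\varphi^{\delta,1}(s,w_1)|^{q^\star}]$ by a power of the second moment, and control the latter by $\int_0^s(s-r)^{-2a}\int_{\delta^{-1/2}\T^2}|\varphi^{\delta,1}(k)|^2\,\d k\,\d r$, which is uniform in $w_1$ because $|e^{\i\langle k,w_1\rangle}|=1$. Dominated convergence would then rest on the pointwise limits $\mathbb{S}_\delta(k_{j_\ell})\to k_{j_\ell}$ and $\Delta_{\delta,j_\ell}^{n_\ell}\phi_V\to\partial_{j_\ell}^{n_\ell}\phi_V$; the main obstacle in this alternative route would be securing a $\delta$-uniform integrable envelope for $|\varphi^{\delta,1}(k)|^2$. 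Assumption~\ref{ass:varphi} is tailored precisely for this: on $\Gamma_1$ the choice $n_1=0$ removes the inverse powers of $\mathbb{S}_\delta$ near the origin, while on each $\Gamma_\ell$ with $\ell\ge 2$, $|k_{j_\ell}|$ is bounded below and Schwartz-class decay of $\widehat{\phi_V}$ supplies the envelope.
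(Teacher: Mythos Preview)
Your primary argument is correct and is genuinely simpler than the paper's proof. The identity $\int_{\R^2}e^{\i\langle k,z\rangle}\Delta^{n}_{\delta,j}f(z)\,\d z=(-\i\,\mathbb S_\delta(k_j))^{n}\widehat f(k)$ follows from the change of variables $z\mapsto z+\delta^{1/2}e_j$ and the algebraic identity $\i\,\mathbb S_\delta(k_j)=(e^{\i\delta^{1/2}k_j}-1)/\delta^{1/2}$, and combined with the standard $\widehat{\partial^n_j f}(k)=(-\i k_j)^n\widehat f(k)$ it makes each summand in the definition (\ref{def:varphi1t}) equal to $\widehat{\phi_V}(k)-\widehat{\phi_V}(k)=0$. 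Hence $\varphi^{\delta,1}\equiv 0$, $D^{\delta,1}(\phi)\equiv 0$, and the conclusion is trivial.

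The paper does not record this exact cancellation. Instead it runs the factorization machinery of Lemmas~\ref{lem:J1}--\ref{lem:J3}: it bounds $\sup_{\delta}|\varphi^{\delta,1}(k)|\le C(1+|k|^{10})^{-1}$, notes the pointwise limit $\varphi^{\delta,1}(k)\to 0$, controls $\E[|J^{-a}\varphi^{\delta,1}(s,w_1)|^q]$ by a power of the isometry norm, and concludes by dominated convergence through Lemma~\ref{lem:J3}. Your ``robust alternative'' is exactly this route. The advantage of the paper's approach is that it is the template reused verbatim for $D^{\delta,2}(\phi)$ in Proposition~\ref{prop:D3}, where $\varphi^{\delta,2}_t$ genuinely carries the floor function and no such cancellation is available; the advantage of your observation is that it dispatches Proposition~\ref{prop:D2} in one line and shows, in hindsight, that the split (\ref{dec:varphi}) could have been defined with $\varphi^{\delta,1}$ omitted.
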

 \begin{proof}[\bf Proof]
Let $(p_1,p_2,a)$ satisfy Assumption~\ref{ass:pq}, and define $J^{-a}\varphi^{\delta,1}(s,w_1)$  and $J^{-a}\overline{\varphi^{\delta,1}}(s,w_1)$ 
according to (\ref{def:J2}). 
By (\ref{prop:Sdelta}) and the choice of $\varphi^{\delta,1}$ from Assumption~\ref{ass:varphi}, 
\begin{align}\label{varphiD2-1}
\sup_{\delta\in (0,1)}\big|\varphi^{\delta,1}(k)\big|\leq \frac{C_{\ref{varphiD2-1}}}{1+|k|^{10}},\quad\forall\;k\neq 0
\end{align}
for some constant $C_{\ref{varphiD2-1}}$ depending only on $\phi$, and
\begin{align}\label{varphiD2-2}
\lim_{\delta \to 0+}\varphi^{\delta,1}(k)=0.
\end{align}
On the other hand, for all $q\in [1,\infty)$ and $T\in (0,\infty)$, 
\begin{align}
\begin{split}
\sup_{w_1\in \R^2}\sup_{s\in [0,T]}\E\left[\big|J^{-a}\varphi^{\delta,1}(s,w_1)\big|^q\right]
\leq C_{\ref{eq:D2-1}}\left(\int_0^T\d rr^{-2a}\int_{\R^2}\d k e^{rQ(k)}|\varphi^{\delta,1}(k)|^2\right)^{q/2},\label{eq:D2-1}
\end{split}
\end{align}
where the inequality uses the definition (\ref{def:J2-0}) of $J^{-a}$  and the Burkholder--Davis--Gundy inequality \cite[Theorem~IV.4.1]{RY}.

Applying  (\ref{varphiD2-1}), (\ref{varphiD2-2}), and  the assumption $a\in (0,\tfrac{1}{2})$ from Assumption~\ref{ass:pq} to  (\ref{eq:D2-1}), we obtain 
\begin{align}\label{J-avarphi1:prop}
\begin{split}
&\sup_{\delta\in (0,1)}\sup_{w_1\in \R^2}\sup_{s\in [0,T]}\E\left[\big|J^{-a}\varphi^{\delta,1}(s,w_1)\big|^q\right]<\infty\\
&\hspace{2cm}\mbox{and}\quad \lim_{\delta\to 0+}\sup_{w_1\in \R^2}\sup_{s\in [0,T]}\E\left[\big|J^{-a}\varphi^{\delta,1}(s,w_1)\big|^q\right]=0
\end{split}
\end{align}
by dominated convergence. Then applying these two properties to Lemma~\ref{lem:J3}  with $v_t(s,w_1)(\omega)\equiv J^{-a}\varphi^{\delta,1}(s,w_1)(\omega)$, we obtain from dominated convergence that
\[
\lim_{\delta\to 0+}\E \left[\sup_{t\in [0, T]\cap \mathbb  Q}\big|J^{a-1}J^{-a}\varphi^{\delta,1}(t)\big|\right]=0.
\]
The same limit holds with $\varphi^{\delta,1}$ replaced by $\overline{\varphi^{\delta,1}}$ since, in terms of complex conjugates, we have  
\begin{align}\label{D1bar}
J^{-a}\overline{\varphi^{\delta,1}}(s,w_1)=\overline{J^{-a}\varphi^{\delta,1}(s,w_1)}.
\end{align}
Now we have these limits, the stochastic integral form of $J^{a-1}J^{-a}$ in (\ref{factorization}), the definition (\ref{def:Ddelta1})  of $ D^{\delta,1}(\phi)$ and its continuity in $t$. Recalling the notation $\dint \mathbb  V\Phi d\mathbb  W$ defined in (\ref{def:VW}), we deduce that, as $\delta\to 0+$, $\sup_{t\in [0,T]}|D_t^{\delta,1}(\phi)|$ converges to zero in $L_1(\P)$. The proof is complete.
 \end{proof}

 The convergence of the processes $D^{\delta,2}(\phi)$ defined in (\ref{def:Ddelta2}) follows from a more refined argument.  We also need the following two lemmas.

\begin{lem}\label{lem:J4}\sl 
Fix $q\in [1,\infty)$.
For any $\delta\in (0,1)$, $0\leq s\leq T<\infty$ and $w_1\in \R^2$, we have
\begin{align}
\begin{split}\label{J4}
&\E\left[\sup_{t\in [s,T]\cap \mathbb  Q}|J^{-a} \varphi^{\delta,2}_t(s,w_1)|^q\right]^{1/q}\\
&\hspace{.5cm}\leq C_{\ref{J4}}\sum_{\ell=1}^m\sum_{n=1}^\infty \frac{1}{n!}\sum_{j=0}^n {n\choose j}
  \left(\int_0^T \d rr^{-2a}I_{\ref{J4}}(\ell,n,j,\delta,r,w_1)\right)^{1/2}
\end{split}
\end{align}
for some constant $C_{\ref{J4}}$ depending only on $q$, $\phi$ and the integers $n_1,\cdots,n_m$, $j_1,\cdots,j_m$ fixed in Assumption~\ref{ass:varphi}. Here in (\ref{J4}),
\begin{align*}
\begin{split}
&I_{\ref{J4}}(\ell,n,j,\delta,r,w_1)\\
=&\int_{\delta^{-1/2}\T^2}(\delta^{1/2}k_1)^{2j}(\delta^{1/2}k_2)^{2(n-j)}\left|e^{\i \langle k,w_1\rangle+rQ(k)/2}\times \1_{\Gamma_\ell}(k)\frac{(-1)^{n_\ell}}{\big(\i \mathbb  S_\delta(k_{j_\ell})\big)^{n_\ell}}\right|^2\d k
\end{split}
\end{align*}
for $\Gamma_\ell$ and $n_\ell$ fixed in Assumption~\ref{ass:varphi}. 
\end{lem}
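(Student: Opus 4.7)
The central idea is that all $t$-dependence in $\varphi^{\delta,2}_t(k)$ lives inside a single oscillatory difference $e^{\i\langle k,\lf z\rf_{\delta,t}\rangle}-e^{\i\langle k,z\rangle}$ under a $z$-integral, and the displacement $\lf z\rf_{\delta,t}-z$ is componentwise bounded by $\delta^{1/2}$ uniformly in $t$ by (\ref{floorbound}). The plan is to expose this $t$-dependence via a Taylor expansion so that it sits in a scalar prefactor rather than inside a stochastic integral; then the supremum over $t$ becomes harmless.

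More precisely, I would first factor $e^{\i\langle k,\lf z\rf_{\delta,t}\rangle}-e^{\i\langle k,z\rangle}=e^{\i\langle k,z\rangle}(e^{\i\langle k,\lf z\rf_{\delta,t}-z\rangle}-1)$, expand the second factor as $\sum_{n\geq 1}\frac{1}{n!}(\i\langle k,\lf z\rf_{\delta,t}-z\rangle)^n$, and apply the binomial theorem to get
\begin{align*}
\varphi^{\delta,2}_t(k)=\sum_{\ell=1}^m\sum_{n=1}^\infty\frac{1}{n!}\sum_{j=0}^n\binom{n}{j}\,\1_{\Gamma_\ell}(k)\frac{(-1)^{n_\ell}(\i k_1)^j(\i k_2)^{n-j}}{(\i\mathbb S_\delta(k_{j_\ell}))^{n_\ell}}\,\Psi^{\delta,\ell,n,j}_t(k),
\end{align*}
where $\Psi^{\delta,\ell,n,j}_t(k)\defeq\tfrac{1}{2\pi}\int_{\R^2}\d z\,\Delta^{n_\ell}_{\delta,j_\ell}\phi_V(z)(\lf z\rf_{\delta,t,1}-z_1)^j(\lf z\rf_{\delta,t,2}-z_2)^{n-j}e^{\i\langle k,z\rangle}$ carries the only $t$-dependence. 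I would then apply $J^{-a}$ termwise and invoke the stochastic Fubini theorem (\cite[Theorem~2.6]{Walsh}) to move the $z$-integration outside the Wiener integral, producing for each $(\ell,n,j)$-summand a representation of the form $\int\d z\,\Delta^{n_\ell}_{\delta,j_\ell}\phi_V(z)(\lf z\rf_{\delta,t,1}-z_1)^j(\lf z\rf_{\delta,t,2}-z_2)^{n-j}G^{\delta,\ell,n,j}(s,w_1+z)$ with $G^{\delta,\ell,n,j}(s,y)$ a $t$-independent Wiener integral.

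With the $t$-dependence now bounded pointwise in $\omega$ by $\delta^{n/2}$, the sup over $t$ may be pulled inside past the countable sum and the $z$-integral. Taking $L^q(\P)$-norms and applying Minkowski twice reduces matters to controlling $\E[|G^{\delta,\ell,n,j}(s,y)|^q]^{1/q}$. By Gaussianity this is comparable to the $L^2(\P)$-norm, whose square is
\begin{align*}
\int_0^s\d r\,(s-r)^{-2a}\int_{\delta^{-1/2}\T^2}|k_1|^{2j}|k_2|^{2(n-j)}e^{(s-r)Q(k)}\frac{\1_{\Gamma_\ell}(k)}{|\mathbb S_\delta(k_{j_\ell})|^{2n_\ell}}\d k.
\end{align*}
Absorbing $\delta^n$ into $|k_1|^{2j}|k_2|^{2(n-j)}$ converts this factor to $(\delta^{1/2}k_1)^{2j}(\delta^{1/2}k_2)^{2(n-j)}$, the change $r\mapsto s-r$ replaces $(s-r)^{-2a}$ by $r^{-2a}$, and enlarging the time interval from $[0,s]$ to $[0,T]$ yields $\int_0^T r^{-2a}I_{\ref{J4}}(\ell,n,j,\delta,r,w_1)\d r$ precisely. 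The remaining $\int|\Delta^{n_\ell}_{\delta,j_\ell}\phi_V(z)|\d z$ is bounded uniformly in $\delta$ by $\|\partial^{n_\ell}_{j_\ell}\phi_V\|_{L^1}$ via iterated fundamental theorem of calculus, and is absorbed into $C_{\ref{J4}}$.

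The main obstacle is the rigorous justification of the interchange of the infinite series in $n$ with both the stochastic integral and the $z$-integral, together with the application of stochastic Fubini to a factor that is merely bounded (not smooth) in $z$. Both rest on absolute convergence: on $\delta^{-1/2}\T^2$ one has $|\delta^{1/2}k_i|\leq\pi$, so a crude majorant is controlled by $\sum_n (2\pi^2)^n/n!<\infty$, and $\Delta^{n_\ell}_{\delta,j_\ell}\phi_V$ is uniformly $L^1$ with a $\delta$-independent bound. Everything else — Gaussian moment comparison, Minkowski, and the change of variables — is routine.
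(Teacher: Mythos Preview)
Your proposal is correct and follows essentially the same route as the paper: Taylor-expand $e^{\i\langle k,\lf z\rf_{\delta,t}-z\rangle}-1$, binomial-split, isolate the $t$-dependence in the bounded factors $(\lf z\rf_{\delta,t,j}-z_j)$, and reduce to a $t$-independent Wiener integral whose second moment is the stated $\d r\,\d k$ integral. The only cosmetic differences are that the paper uses a H\"older step (against the weight $|\Delta^{n_\ell}_{\delta,j_\ell}\phi_V(z)|\,\d z$) followed by Minkowski, whereas you go straight to the Minkowski integral inequality, and the paper appeals to Burkholder--Davis--Gundy instead of Gaussian hypercontractivity to pass from $L^q$ to $L^2$ on the Wiener integral; both pairs of choices are equivalent here.
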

\begin{proof}[\bf Proof] Let $0\leq s\leq t\leq T<\infty$, and recall the definition of $\varphi^{\delta,2}_t$ in (\ref{def:varphi2t}). By the stochastic Fubini theorem  \cite[Theorem~2.6 on page 296]{Walsh} and the definition of $J^{-a} \varphi^{\delta,2}_t(s,w_1)$ according to (\ref{def:J2-0}), we can write
\begin{align}
&J^{-a} \varphi^{\delta,2}_t(s,w_1)=\sum_{\ell=1}^m\int_{\R^2}\d z\Delta^{n_\ell}_{\delta,j_\ell}\phi_V(z) \notag\\
&\hspace{.5cm}\times \int_0^s\int_{\delta^{-1/2}\T^2}(s-r)^{-a}\big(e^{\i\langle k,\lf z\rf_{\delta,t}\rangle}-e^{\i\langle k,z\rangle}\big) I_{\ref{J4-10000}}(\ell,\delta,s-r,w_1,k)W(\d r,\d k).\label{J4-10000}
\end{align}
where
\begin{align*}
I_{\ref{J4-10000}}(\ell,\delta,r,w_1,k)
=e^{\i \langle k,w_1\rangle+rQ(k)/2}\times \1_{\Gamma_\ell}(k)\frac{(-1)^{n_\ell}}{\big(\i \mathbb  S_\delta(k_{j_\ell})\big)^{n_\ell}} ,\quad 1\leq \ell\leq m,
\end{align*}
To handle the difference of complex exponentials, we write
\begin{align*}
&e^{\i \langle k,\lf z\rf_{\delta,t}-z\rangle }-1\\
=&\sum_{n=1}^\infty \frac{(-\i)^n}{n!} \Big(k_1\big(z_1-\lf z_1\rf_{\delta,t,1}\big)+k_2\big(z_2-\lf z_2\rf_{\delta,t,2}\big)\Big)^n \\
=&\sum_{n=1}^\infty \frac{(-\i)^n}{n!}\sum_{j=0}^n {n\choose j}\left(\frac{z_1-\lf z_1\rf_{\delta,t,1}}{\delta^{1/2}}\right)^j\left(\frac{z_1-\lf z_2\rf_{\delta,t,1}}{\delta^{1/2}}\right)^{n-j}(\delta^{1/2}k_1)^j(\delta^{1/2}k_2)^{n-j}.
\end{align*}
Note that $k\mapsto \delta^{1/2}k$ is uniformly bounded on $\delta^{-1/2}\T^2$, and we have (\ref{floorbound}) and $a\in (0,1/2)$. Hence, combining the last two displays gives the following equation where the series on the right-hand side converges absolutely in $L_2(\P)$:
\begin{align}
\begin{split}
&\;J^{-a} \varphi^{\delta,2}_t(s,w_1)\\
=&\;\sum_{\ell=1}^m \sum_{n=1}^\infty \frac{(-\i)^n}{n!}\sum_{j=0}^n{n\choose j} \int_{\R^2}\d z\Delta^{n_\ell}_{\delta,j_\ell}\phi_V(z) \\
&\;\times \left(\frac{z_1-\lf z_1\rf_{\delta,t,1}}{\delta^{1/2}}\right)^j\left(\frac{z_2-\lf z_2\rf_{\delta,t,2}}{\delta^{1/2}}\right)^{n-j} I_{\ref{J4-100}}(s,\delta,n,j,z,\ell,w_1),\label{J4-100}
\end{split}
\end{align}
where
\begin{align*}
&I_{\ref{J4-100}}(s,\delta,n,j,z,\ell,w_1)\\
=&\; \int_0^s \int_{\delta^{-1/2}\T^2}(s-r)^{-a}(\delta^{1/2}k_1)^j(\delta^{1/2}k_2)^{n-j}e^{-\i \langle k,z\rangle} I_{\ref{J4-10000}}(\ell,\delta,s-r,w_1,k)W(\d r,\d k).
\end{align*}

We use (\ref{J4-100}) to obtain (\ref{J4}) by the following argument.
First, (\ref{floorbound}) and (\ref{J4-100}) give
\begin{align*}
&\sup_{t\in [s,T]\cap \mathbb  Q}|J^{-a} \varphi^{\delta,2}_t(s,w_1)|\\
\leq &\;\sum_{\ell=1}^m\sum_{n=1}^\infty \frac{1}{n!}\sum_{j=0}^n{n\choose j}\int_{\R^2}\d z|\Delta^{n_\ell}_{\delta,j_\ell}\phi_V(z)|\times |I_{\ref{J4-100}}(s,\delta,n,j,z,\ell,w_1)|.
\end{align*}
Then we apply
H\"older's inequality with $p$ being the H\"older conjugate of $q$ to the $\d z$-integrals above,
take expectation, and finally apply Minkowski's inequality with respect to $L_q(\P)$. These steps lead to  
\begin{align}
&\E\left[\sup_{t\in [s,T]\cap \mathbb  Q}|J^{-a} \varphi^{\delta,2}_t(s,w_1)|^q\right]^{1/q}\notag\\
\leq &\;\sum_{\ell=1}^m\sum_{n=1}^\infty \frac{1}{n!}\sum_{j=0}^n {n\choose j}\left(\int_{\R^2}\d z
|\Delta^{n_\ell}_{\delta,j_\ell}\phi_V(z)|\right)^{1/p}\notag\\
&\;\times \E\Bigg[\int_{\R^2}\d z|\Delta^{n_\ell}_{\delta,j_\ell}\phi_V(z)|  |I_{\ref{J4-100}}(s,\delta,n,j,z,\ell,w_1)|^q\Bigg]^{1/q}\notag\\
\begin{split}
\leq&\; C_{\ref{J4-1}}\sum_{\ell=1}^m\sum_{n=1}^\infty \frac{1}{n!}\sum_{j=0}^n {n\choose j}\left(\int_{\R^2}\d z
|\Delta^{n_\ell}_{\delta,j_\ell}\phi_V(z)|\right)\\
&\;\times \left(\int_0^T \d r'(r')^{-2a}I_{\ref{J4}}(\ell,n,j,\delta,r',w_1)\right)^{1/2}\!\!\!\!\!,
\end{split}\label{J4-1}
\end{align}
where the last inequality follows from the Burkholder--Davis--Gundy inequality \cite[Theorem~IV.4.1]{RY}, $C_{\ref{J4-1}}$ is a constant depending only on $q$, and we change variables to $r'=s-r$. Letting $C_{\ref{J4-1}}$ absorb the finite constant
$\sup_{\delta\in (0,1)}\sup_{\ell\in \{1,\cdots,m\}}\int_{\R^2}\d z
|\Delta^{n_\ell}_{\delta,j_\ell}\phi_V(z)|$,
the required inequality (\ref{J4}) follows from (\ref{J4-1}). The proof is complete. 
\end{proof}

\begin{lem}\label{lem:J5}\sl 
For any $T\in (0,\infty)$,
we can find a constant $C_{\ref{eq:J5-1}}$ depending only on $\{\Gamma_1,\cdots,\Gamma_m\}$ such that  
\begin{align}
\label{eq:J5-1}
\begin{split}
&\sup_{0\leq j\leq n}\sup_{\ell\in \{1,\cdots, m\}}\sup_{(r,w_1)\in [0,T]\times \R^2}\sup_{\delta\in(0,1)}\left|I_{\ref{J4}}(\ell,n,j,\delta,r,w_1)\right|\leq C_{\ref{eq:J5-1}}\pi^{2n},\quad \forall\;n\in \mathbb  N. 
\end{split}
\end{align}
Moreover, we have 
\begin{align}\label{eq:J5}
\lim_{\delta\to 0+}I_{\ref{J4}}(\ell,n,j,\delta,r,w_1)=0,\quad\forall\; r\in (0,\infty).
\end{align}
\end{lem}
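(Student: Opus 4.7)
The proof rests on three simple reductions of the integrand of $I_{\ref{J4}}$. First, since $k\in\delta^{-1/2}\T^2$ forces $|\delta^{1/2}k_i|\leq\pi$, the polynomial prefactor obeys
\[
(\delta^{1/2}k_1)^{2j}(\delta^{1/2}k_2)^{2(n-j)}\leq\pi^{2n}
\]
uniformly in $(\delta,j,k)$. Second, $|e^{\i\langle k,w_1\rangle+rQ(k)/2}|^2=e^{rQ(k)}$, which removes the $w_1$-dependence (and also shows finiteness of the supremum in $w_1$). Third, the lower bound in (\ref{prop:Sdelta}) gives $|\mathbb  S_\delta(k_{j_\ell})|^{-2n_\ell}\leq(\pi/2)^{2n_\ell}|k_{j_\ell}|^{-2n_\ell}$ on $\delta^{-1/2}\T$. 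After these reductions it suffices to control
\[
\mathcal J^{\delta,r}_\ell\defeq\int_{\delta^{-1/2}\T^2\cap\Gamma_\ell}e^{rQ(k)}|k_{j_\ell}|^{-2n_\ell}\d k
\]
uniformly in $r\in[0,T]$ and $\delta\in(0,1)$.

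For (\ref{eq:J5-1}) I would split according to $\ell$. When $\ell=1$, $n_1=0$ and $\Gamma_1=[-1,1]^2$ is compact, so $\mathcal J^{\delta,r}_1\leq |\Gamma_1|=4$. When $\ell\geq 2$, $n_\ell=10$ and $|k_{j_\ell}|$ is bounded below by some $c_\ell>0$ on $\Gamma_\ell$ by Assumption~\ref{ass:varphi}; the strong decay $|k_{j_\ell}|^{-20}$ integrates in the $k_{j_\ell}$-direction against the rectangular geometry of $\Gamma_\ell$, while in the direction transverse to $j_\ell$ one appeals either to boundedness of the corresponding rectangle or, on unbounded pieces, to the Gaussian factor $e^{rQ(k)}\leq e^{-C_{\ref{QR:ineq}}r|k|^2}$ coming from Assumption~\ref{ass}(4). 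Collecting these estimates and multiplying by the factor $\pi^{2n}$ from the first reduction yields (\ref{eq:J5-1}) with a constant depending only on $\{\Gamma_1,\ldots,\Gamma_m\}$.

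For (\ref{eq:J5}) I would fix $r>0$ and apply dominated convergence. Pointwise in $k$ with $k_{j_\ell}\neq 0$, the second identity of (\ref{prop:Sdelta}) gives $\mathbb  S_\delta(k_{j_\ell})\to k_{j_\ell}$, while the prefactor $(\delta^{1/2}k_1)^{2j}(\delta^{1/2}k_2)^{2(n-j)}$ vanishes as $\delta\to 0+$ for every $n\geq 1$; hence the full integrand tends to $0$ pointwise. The majorant $\pi^{2n}(\pi/2)^{2n_\ell}e^{rQ(k)}\1_{\Gamma_\ell}(k)|k_{j_\ell}|^{-2n_\ell}$ is integrable on $\R^2$ since $e^{rQ(k)}$ with $r>0$ fixed is genuinely Gaussian, and (\ref{eq:J5}) follows. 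The most delicate point throughout is the uniform bound for $\mathcal J^{\delta,r}_\ell$ at $r=0$ on components of the partition that are unbounded transversely to $j_\ell$: it is precisely to handle this that Assumption~\ref{ass:varphi} chooses the large exponent $n_\ell=10$ and requires $|k_{j_\ell}|$ to be bounded away from zero on each $\Gamma_\ell$ with $\ell\geq 2$.
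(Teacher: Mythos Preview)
Your argument for the uniform bound~(\ref{eq:J5-1}) contains a genuine gap. After the three reductions you describe, the task is to bound
\[
\mathcal J^{\delta,r}_\ell=\int_{\delta^{-1/2}\T^2\cap\Gamma_\ell}e^{rQ(k)}\,|k_{j_\ell}|^{-2n_\ell}\,\d k
\]
uniformly in $r\in[0,T]$ and $\delta\in(0,1)$. For a rectangle $\Gamma_\ell$ ($\ell\geq 2$) that is unbounded in the direction transverse to $j_\ell$, your proposed dichotomy (``boundedness of the rectangle, or the Gaussian factor'') breaks down at $r=0$: the factor $e^{rQ(k)}$ is then identically~$1$, the decay $|k_{j_\ell}|^{-20}$ acts only along the $j_\ell$-axis, and the transverse extent of $\delta^{-1/2}\T^2\cap\Gamma_\ell$ is of order $\delta^{-1/2}$. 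For instance, with $\Gamma_\ell=(1,\infty)\times\R$ and $j_\ell=1$ one gets $\mathcal J^{\delta,0}_\ell\asymp\delta^{-1/2}$. Your closing sentence asserts that the choice $n_\ell=10$ is what ``handles'' this case, but it does not: that exponent controls only the $j_\ell$-direction and contributes nothing transversely. Since any finite partition of $\R^2\setminus[-1,1]^2$ into rectangles must contain corner pieces unbounded in both coordinate directions, the difficulty cannot be removed by geometry alone. The paper's own one-line proof of~(\ref{eq:J5-1}) invokes exactly the three ingredients you use, so your reductions match the paper's; whatever additional structural input is intended there is not spelled out, and you should make it explicit rather than defer to $n_\ell=10$.

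Your proof of~(\ref{eq:J5}) by dominated convergence is correct but takes a different route from the paper. The paper changes variables to $k'=\delta^{1/2}k$, uses that $k\mapsto e^{rQ(k)/2}\in\S(\R^2)$ for fixed $r>0$ to obtain the pointwise bound $|I_{\ref{J4-10000}}(\ell,\delta,r,w_1,k)|^2\leq C_r/|k|^3$, and then computes directly that $I_{\ref{J4}}\leq C\,\delta^{1/2}(2\pi^2)^{n-1}\int_{\T^2}|k'|^{-1}\d k'$. Your approach is softer and avoids the change of variables; the paper's gives an explicit rate of convergence.
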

\begin{proof}[\bf Proof]
To see (\ref{eq:J5-1}), we simply note that $|\delta^{1/2}k_j|\leq \pi$ for $k_j\in \delta^{-1/2}\mathbb  T$
and 
recall Assumption~\ref{ass:varphi} and (\ref{prop:Sdelta}).
For the proof of (\ref{eq:J5}),
first we change variables back to $k'=\delta^{1/2}k$:
\begin{align*}
I_{\ref{J4}}(\ell,n,j,\delta,r,w_1)
=&\;\delta^{-1}\int_{\T^2} (k_1')^{2j}(k_2')^{2(n-j)}|I_{\ref{J4-10000}}(\ell,\delta,r,w_1,\delta^{-1/2}k')|^2\d k'.
\end{align*}
Recall the choice of $(n_1,\cdots,n_m)$ and $(\Gamma_1,\cdots,\Gamma_m)$ in Assumption~\ref{ass:varphi} and the properties in (\ref{prop:Sdelta}).
Since $k\mapsto e^{rQ(k)/2}\in \S(\R^2)$ for every fixed $r>0$, we can find a constant $C_{\ref{eq:J5-0}}$ depending only on $r$ such that 
\begin{align}\label{eq:J5-0}
|I_{\ref{J4-10000}}(\ell,\delta,r,w_1,k)|^2\leq \frac{C_{\ref{eq:J5-0}}}{|k|^{3}},\quad\forall\; k\neq 0.
\end{align}
Now we use the assumption that $n\geq 1$.
It follows from the last two displays that 
\begin{align*}
|I_{\ref{J4}}(\ell,n,j,\delta,r,w_1)|
\leq &\;C_{\ref{eq:J5-0}}\int_{\T^2}|k'|^{2n}\frac{1}{\delta|\delta^{-1/2}k'|^{3}}\d k'\\
=&\;C_{\ref{eq:J5-0}}\delta^{1/2}\int_{\T^2}|k'|^{2(n-1)}\frac{1}{|k'|}\d k'\\
\leq &\;C_{\ref{eq:J5-0}}\delta^{1/2}(2\pi^2)^{(n-1)}\int_{\T^2}\frac{\d k'}{|k'|}\xrightarrow[\delta\to 0+]{} 0
\end{align*}
since $\int_{\T^2}\d k'/|k'|\leq C\int_0^{10}r\d r/r<\infty$ for a universal constant $C$.
This proves (\ref{eq:J5}). 
\end{proof}

 \begin{prop}\label{prop:D3}\sl 
For the processes $\{D^{\delta,2}(\phi)\}_{\delta\in (0,1)}$ defined in (\ref{def:Ddec}),
$\sup_{t\in [0,T]}|D_t^{\delta,2}(\phi)|$ converge to zero in $L_1(\P)$ as $\delta\to 0+$ for all $T\in (0,\infty)$.
 \end{prop}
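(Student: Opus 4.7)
The strategy parallels the proof of Proposition~\ref{prop:D2}, but with the extra twist that the integrand $\varphi^{\delta,2}_t$ now depends on $t$ through the floor function, and the bound will come out as an infinite series in $n$ arising from Lemma~\ref{lem:J4}. First, I would observe the key algebraic identity $\delta^{-1}Q(\delta^{1/2}k)=Q(k)$ (since $Q$ is a quadratic form), so the exponent in (\ref{def:Ddelta2}) is simply $e^{(t-r)Q(k)/2}$. This allows me to apply the factorization identity (\ref{factorization}) of Lemma~\ref{lem:J1} pointwise in $t$ with $v(k)=\varphi^{\delta,2}_t(k)$ (and its complex conjugate for the imaginary part), rewriting the stochastic integral defining $D^{\delta,2}_t(\phi)$ as $\sqrt{v}\,\mathbb V\bigl[J^{a-1}J^{-a}\varphi^{\delta,2}_t\bigr](t)$ after unpacking the notation (\ref{def:VW}).

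Next, I would apply Lemma~\ref{lem:J3} with $v_t(s,w_1)=J^{-a}\varphi^{\delta,2}_t(s,w_1)$ (and separately for $\overline{\varphi^{\delta,2}_t}$, using $J^{-a}\overline{\varphi^{\delta,2}_t}=\overline{J^{-a}\varphi^{\delta,2}_t}$ as in (\ref{D1bar})) to obtain
\begin{align*}
\E\!\left[\sup_{t\in[0,T]\cap\mathbb Q}|D^{\delta,2}_t(\phi)|\right]\leq C\!\left(\int_0^T\!\d s\!\int_{\R^2}\!\d w_1\,\E\!\left[\sup_{t\in[s,T]\cap\mathbb Q}|J^{-a}\varphi^{\delta,2}_t(s,w_1)|^{q^\star}\right]\!e^{-q^\star\lambda|w_1|}\right)^{\!1/q^\star}\!\!.
\end{align*}
Then Lemma~\ref{lem:J4} bounds the inner expectation by the series
\begin{align*}
C_{\ref{J4}}\sum_{\ell=1}^m\sum_{n=1}^\infty \frac{1}{n!}\sum_{j=0}^n\binom{n}{j}\left(\int_0^T r^{-2a}I_{\ref{J4}}(\ell,n,j,\delta,r,w_1)\,\d r\right)^{1/2}.
\end{align*}
Crucially, $I_{\ref{J4}}$ is independent of $w_1$ since $|e^{\i\langle k,w_1\rangle}|=1$, so the $w_1$-integral contributes only the finite factor $\int_{\R^2}e^{-q^\star\lambda|w_1|}\d w_1$, and the $s$-integral contributes $T$.

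Now I would finish with two dominated-convergence arguments. For the $r$-integral inside the square root: Lemma~\ref{lem:J5} gives the pointwise convergence $I_{\ref{J4}}(\ell,n,j,\delta,r,w_1)\to 0$ for each $r>0$, together with the uniform bound $|I_{\ref{J4}}|\leq C_{\ref{eq:J5-1}}\pi^{2n}$; since $a<\tfrac12$ by Assumption~\ref{ass:pq}, the envelope $C\pi^{2n}r^{-2a}$ is integrable on $[0,T]$, so $\int_0^T r^{-2a}I_{\ref{J4}}\d r\to 0$ as $\delta\to 0+$. For the outer series in $n$ and $j$: the uniform bound yields the term $\frac{1}{n!}\binom{n}{j}\pi^n$, and summing gives $\sum_{n=1}^\infty\frac{(2\pi)^n}{n!}=e^{2\pi}-1<\infty$, so dominated convergence applies to the series. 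Combining, the full right-hand side vanishes as $\delta\to 0+$, which together with the continuity of $t\mapsto D^{\delta,2}_t(\phi)$ (so the supremum over $[0,T]\cap\mathbb Q$ equals that over $[0,T]$) gives the desired $L_1$-convergence.

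The main obstacle is purely organizational: we must verify that the combinatorial series from the Taylor expansion of $e^{\i\langle k,\lf z\rf_{\delta,t}-z\rangle}-1$ (encoded already in Lemma~\ref{lem:J4}) is summably dominated independently of $\delta$. The $\pi^{2n}$ growth from $|\delta^{1/2}k|\leq\pi$ is exactly tamed by the $1/n!$ factor coming from the power series, and the $n_\ell\geq 10$ integrations by parts (via $\mathbb{S}_\delta(k_{j_\ell})^{-n_\ell}$) built into Assumption~\ref{ass:varphi} guarantee the $|k|^{-3}$ decay needed in Lemma~\ref{lem:J5} for the $\delta\to 0+$ limit.
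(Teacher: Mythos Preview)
Your proof is correct and follows essentially the same approach as the paper: factorize $D^{\delta,2}(\phi)$ via Lemma~\ref{lem:J1}, apply Lemma~\ref{lem:J3} with $v_t(s,w_1)=J^{-a}\varphi^{\delta,2}_t(s,w_1)$, and then invoke Lemmas~\ref{lem:J4} and~\ref{lem:J5} together with dominated convergence (both for the $r$-integral and for the series in $n$) to drive the bound to zero. Your explicit observation that $I_{\ref{J4}}$ is independent of $w_1$ and your computation of the summable envelope $\sum_{n\geq 1}(2\pi)^n/n!=e^{2\pi}-1$ make the dominated-convergence step a bit more transparent than the paper's presentation, but the argument is the same.
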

 \begin{proof}[\bf Proof]
 The proof of this proposition is similar to the proof of Proposition~\ref{prop:D2}. The new ingredients are Lemmas~\ref{lem:J4} and~\ref{lem:J5} in order to obtain an analogue of (\ref{J-avarphi1:prop}) and so
\begin{align}\label{eq:D3lim}
\lim_{\delta\to 0+}\E \left[\sup_{t\in [0, T]\cap \mathbb  Q}\big|J^{a-1}J^{-a}\varphi^{\delta,2}_t(t)\big|^q\right]^{1/q}=0,\quad\forall\; T\in (0,\infty). 
\end{align}
If (\ref{eq:D3lim}) is proven valid, then by  (\ref{factorization}), the  equality $J^{-a}\overline{\varphi^{\delta,2}_t}=\overline{J^{-a}\varphi^{\delta,2}_t}$,
the definition of $D^{\delta,2}_t(\phi)$ and its continuity in time $t$, the proof of the proposition will follow. 

Let $q\in [1,\infty)$ and $T\in (0,\infty)$. 
To obtain the analogue of (\ref{J-avarphi1:prop}) with $\varphi^{\delta,1}$ replaced by $\varphi^{\delta,2}_t$, we use
Lemmas~\ref{lem:J4} and~\ref{lem:J5} to get the following two properties:
\begin{align*}
&\sup_{\delta\in (0,1)}\sup_{w_1\in \R^2}\sup_{s\in [0,T]}\E\left[\sup_{t\in [s,T]\cap \mathbb  Q}\big|J^{-a}\varphi^{\delta,2}_t(s,w_1)\big|^q\right]^{1/q}
\\
\leq &C_{\ref{J4}}\sum_{\ell=1}^m\sum_{n=1}^\infty \frac{1}{n!}\sum_{j=0}^n {n\choose j}   
\left(\int_0^T \d rr^{-2a} C_{\ref{eq:J5-1}} \pi^{2n}
\right)^{1/2}<\infty,
\end{align*}
where the second inequality follows since $a\in (0,\tfrac{1}{2})$ from Assumption~\ref{ass:pq},
and 
\begin{align*}
\lim_{\delta\to 0+}\sup_{w_1\in \R^2}\sup_{s\in [0,T]}\E\left[\sup_{t\in [s,T]\cap \mathbb  Q}\big|J^{-a}\varphi^{\delta,2}_t(s,w_1)\big|^q\right]^{1/q}=0
\end{align*}
by dominated convergence.
Note that the sum in $n$ in (\ref{J4}) starts with $1$ so that (\ref{eq:J5}) is applicable.
Then  we can apply dominated convergence and Lemma~\ref{lem:J3}
to the above two displays as before in the proof of Proposition~\ref{prop:D2}  and get (\ref{eq:D3lim}). The proof is complete.
\end{proof}

\subsection{Characterization of limits}\label{sec:Mlim}
Let us summarize the results proven so far in Section~\ref{sec:noise}. By Propositions~\ref{prop:DM1},~\ref{prop:D2}, and~\ref{prop:D3}, $D^{\delta}(\phi)$ converges in distribution to zero in the space of probability measures on $C(\R_+,\R)$. (Recall the decomposition of $D^\delta(\phi)$ in (\ref{def:Ddec}).) By Propositions~\ref{prop:M1} and \ref{prop:DM1}, the family of laws $Z^{\delta,c}(\phi)$ is tight in the space of probability measures on $C(\R_+,\R)$. (Recall the decomposition of $Z^{\delta,c}(\phi)$ in (\ref{def:Mdec}).) By (\ref{def:DdeltaZ}), these two combined show that the family of laws $Z^{\delta}(\phi)$ is tight in the space of probability measures on $C(\R_+,\R)$. Since $Z^\delta$ is $C(\R_+,\mathcal S'(\R^2))$-valued by 
Proposition~\ref{prop:Mcontinuous}, it follows from Mitoma's theorem~\cite[Theorem~3.1]{Mitoma} that the family of laws of $Z^\delta$ for $\delta$ ranging over $(0,1)$ is tight in the space of probability measures on $C(\R_+,\S'(\R^2))$. Moreover, it is plain from (\ref{def:Mdeltac1}) that the distributional limit of $Z^\delta$ in $C(\R_+,\S'(\R^2))$ 
can be written as
\begin{align}\label{def:Z0}
Z^{0}_t(\phi)=\sqrt{v}\int_0^t \int_{\R^2}\mathbb  Ve^{(t-r)Q(k)/2}\mathcal F\phi_V(k)\mathbb  W(\d r,\d k)
\end{align}
and so is unique.

Our goal in this subsection is to show that $Z^0$ defined above in (\ref{def:Z0}) solves an additive stochastic heat equation (driven by a single space-time white noise). We start with an application of Duhamel's principle.

\begin{lem}\label{lem:Duhamel}
\sl Write $0^{-1/2}\T^2$ for $\R^2$. 
Then for $\delta\in [0,1)$ and any bounded continuous complex-valued function $\varphi$ defined on $\delta^{-1/2}\T^2$, the continuous process
\[
Z_t(\varphi)=\int_0^t\int_{\delta^{-1/2}\T^2}\mathbb  Ve^{(t-r)Q(k)/2}\varphi(k)\mathbb  W(\d r,\d k)
\]
solves the following SPDE:
\begin{align}
Z_t(\varphi)=\int_0^tZ_r\left(\frac{Q\varphi }{2}\right)\d r+\int_0^t \int_{\delta^{-1/2}\T^2} \mathbb  V \varphi(k)\mathbb  W(\d r,\d k).\label{Y:SPDE}
\end{align}
\end{lem}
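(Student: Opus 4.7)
The plan is to obtain \eqref{Y:SPDE} from Duhamel's identity applied pathwise under the $\mathbb{W}$-integral, followed by an application of the stochastic Fubini theorem. The starting observation is the elementary identity
\[
e^{(t-r)Q(k)/2} \;=\; 1 + \int_r^t \frac{Q(k)}{2}\,e^{(s-r)Q(k)/2}\,\d s, \qquad 0\le r\le t,
\]
which is valid with a bounded integrand since $Q(k)\le 0$ by Assumption~\ref{ass}(4). Substituting this into the defining stochastic integral of $Z_t(\varphi)$ splits it into the pure-noise piece $\int_0^t\!\int\!\mathbb{V}\varphi(k)\,\mathbb{W}(\d r,\d k)$ and a double integral in which an $\d s$-integral sits inside the $\mathbb{W}$-integral.

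For the second piece, I would use that $Q(k)=\langle k,Qk\rangle$ is real-valued, so multiplication by $Q(k)/2$ commutes with the operator $\mathbb{V}$ of \eqref{def:VW} (which merely selects real and imaginary parts and pairs them with independent white noises). This regroups the integrand as $\mathbb{V}\,e^{(s-r)Q(k)/2}\bigl(\tfrac{Q(k)}{2}\varphi(k)\bigr)$. I would then invoke the stochastic Fubini theorem (Walsh, Theorem~2.6 on p.~296, already used in Section~\ref{sec:DM2}) to exchange the outer $\d s$-integral with the $\mathbb{W}$-integral; the inner object produced is precisely $Z_s\!\bigl(\tfrac{Q\varphi}{2}\bigr)$, and reassembling the two pieces yields \eqref{Y:SPDE}. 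Joint continuity in $t$ of both sides is inherited from Kolmogorov-type moment estimates analogous to those in Proposition~\ref{prop:M1}.

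The only delicate point is the $L^2$-integrability hypothesis for stochastic Fubini, which reduces to finiteness of $\int_0^t\d r\int_{\delta^{-1/2}\T^2}\bigl|\int_r^t \tfrac{Q(k)}{2}e^{(s-r)Q(k)/2}\varphi(k)\,\d s\bigr|^2\d k$. For $\delta\in(0,1)$ this is immediate because $\delta^{-1/2}\T^2$ is compact, both $Q$ and $\varphi$ are bounded on it, and $|e^{(s-r)Q(k)/2}|\le 1$. For $\delta=0$ one uses the strict negative-definiteness of $Q$ from Assumption~\ref{ass}(4) to extract Gaussian decay in $k$, combined with whatever decay of $\varphi$ is already needed to make $Z_t(\varphi)$ and $Z_r(Q\varphi/2)$ well-defined as stochastic integrals; in every application of the lemma within the paper, $\varphi$ is (a derivative factor times) a Schwartz function, so all $k$-moments are tamed and the integrability is routine.
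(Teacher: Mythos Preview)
Your proposal is correct and is essentially the same argument as the paper's: both hinge on the identity $e^{(t-r)Q(k)/2}=1+\int_r^t \tfrac{Q(k)}{2}e^{(s-r)Q(k)/2}\,\d s$ together with the stochastic Fubini theorem, the only cosmetic difference being that the paper starts from the right-hand side of \eqref{Y:SPDE} and reduces it to $Z_t(\varphi)$, whereas you start from $Z_t(\varphi)$ and expand. Your explicit discussion of the $L^2$-integrability hypothesis for Fubini (and the caveat that for $\delta=0$ mere boundedness of $\varphi$ is not enough without the Gaussian decay or Schwartz-class decay used in the actual applications) is a welcome addition that the paper leaves implicit.
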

\begin{proof}[\bf Proof]
We write out the right-hand side of (\ref{Y:SPDE}) and then use the stochastic Fubini theorem~\cite[Theorem~2.6 on page 296]{Walsh} in the second equality below to get:
\begin{align*}
&\;\int_0^tZ_s\left(\frac{Q\varphi }{2}\right)\d s+\int_0^t \int_{\delta^{-1/2}\T^2} \mathbb  V\varphi(k)
\mathbb  W(\d r,\d k)\\
=&\;\int_0^t \int_0^s\int_{\delta^{-1/2}\T^2}\mathbb  V \frac{Q(k)}{2}e^{(s-r)Q(k)/2}\varphi(k)\mathbb  W(\d r,\d k)\d s+\int_0^t \int_{\delta^{-1/2}\T^2} \mathbb  V \varphi(k)\mathbb  W(\d r,\d k)\\
=&\;\int_0^t \int_{\delta^{-1/2}\T^2}\mathbb  V\int_r^t  \frac{Q(k)}{2} e^{(s-r)Q(k)/2}\d s \varphi(k)\mathbb  W(\d r,\d k)+\int_0^t \int_{\delta^{-1/2}\T^2}  \mathbb  V\varphi(k)\mathbb  W(\d r,\d k)\\
=&\;\int_0^t \int_{\delta^{-1/2}\T^2}\mathbb  V (e^{(t-r)Q(k)/2}-1)\varphi(k)\mathbb  W(\d r,\d k)+\int_0^t \int_{\delta^{-1/2}\T^2} \mathbb  V \varphi(k)\mathbb  W(\d r,\d k)\\
=&\;\int_0^t\int_{\delta^{-1/2}\T^2}\mathbb  V e^{(t-r)Q(k)/2}\varphi(k)\mathbb  W(\d r,\d k)\\
=&\;Z_t(\varphi),
\end{align*}
which is (\ref{Y:SPDE}).
\end{proof}

\begin{prop}\label{prop:Mlimit}\sl 
The unique distributional limit $Z^0$ defined in (\ref{def:Z0}) of $Z^\delta$ as $\delta\to 0+$ solves the following SPDE: for some space-time white noise $W(\d r,\d k)$ with covariance measure $\d r \d k$ on $\R_+\times \R^2$,
\begin{align}\label{eq:Mlimit}
Z^0_t(\phi)=\int_0^t Z^0_s\left(\frac{\Delta \phi}{2}\right)\d s+\sqrt{v|\det(V)|}\int_0^t\int_{\R^2}\phi(k)W(\d r,\d k),\quad \phi\in \S(\R^2).
\end{align}
\end{prop}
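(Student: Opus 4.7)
The plan is to combine Duhamel's formula (Lemma~\ref{lem:Duhamel}) at $\delta=0$ with a Fourier identity that converts $\phi\mapsto \Delta\phi/2$ into multiplication by $Q(k)/2$ after conjugation by the change of variables $T_V$ from \eqref{def:phiV}, and then to identify the resulting martingale term with an integral against a single scalar-valued space-time white noise by a covariance computation. No serious obstacle is anticipated; the main bookkeeping concerns the volume factor $|\det V|$ and the various Fourier normalizations.

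\emph{Step 1.} Since $\phi_V\in \S(\R^2)$, the function $\mathcal F\phi_V$ is bounded and continuous on $\R^2$, so Lemma~\ref{lem:Duhamel} applies with $\delta=0$ and $\varphi=\mathcal F\phi_V$. Writing $\mathcal Z_t(\psi)\defeq \int_0^t\int_{\R^2}\mathbb V e^{(t-r)Q(k)/2}\psi(k)\mathbb W(\d r,\d k)$ so that $Z^0_t(\phi)=\sqrt v\,\mathcal Z_t(\mathcal F\phi_V)$, Lemma~\ref{lem:Duhamel} yields
\begin{align*}
Z^0_t(\phi)=\sqrt v\int_0^t\mathcal Z_r(Q\mathcal F\phi_V/2)\,\d r+\sqrt v\int_0^t\!\int_{\R^2}\mathbb V\mathcal F\phi_V(k)\,\mathbb W(\d r,\d k).
\end{align*}

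\emph{Step 2.} Since $V$ is symmetric, the change of variables $y=Vz$ in the Fourier integral gives $\mathcal F\phi_V(k)=\mathcal F\phi(V^{-1}k)$; combining with $\mathcal F(\Delta\phi/2)(\ell)=-\tfrac12|\ell|^2\mathcal F\phi(\ell)$ and $V^{-2}=-Q$ from \eqref{def:V} produces the key identity
\begin{align*}
\mathcal F(\Delta\phi/2)_V(k)=-\tfrac12\langle V^{-1}k,V^{-1}k\rangle\mathcal F\phi_V(k)=\tfrac12 Q(k)\mathcal F\phi_V(k).
\end{align*}
Hence $\sqrt v\,\mathcal Z_r(Q\mathcal F\phi_V/2)=\sqrt v\,\mathcal Z_r(\mathcal F(\Delta\phi/2)_V)=Z^0_r(\Delta\phi/2)$, and the drift term in the Duhamel expansion of Step~1 equals $\int_0^t Z^0_r(\Delta\phi/2)\,\d r$, matching \eqref{eq:Mlimit}.

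\emph{Step 3.} Let $N_t(\phi)\defeq \sqrt v\int_0^t\!\int_{\R^2}\mathbb V\mathcal F\phi_V(k)\,\mathbb W(\d r,\d k)$. Using \eqref{def:VW} to rewrite the covariance integrand as $\Re(\mathcal F(\phi_1)_V\overline{\mathcal F(\phi_2)_V})$, Plancherel's theorem (with the normalization $\frac{1}{2\pi}\int\!\cdot\,e^{\i\langle k,\cdot\rangle}$ used in \eqref{def:FT}), and the change of variables $y=Vz$ give, for real $\phi_1,\phi_2\in\S(\R^2)$,
\begin{align*}
\E[N_s(\phi_1)N_t(\phi_2)]=v|\det V|(s\wedge t)\int_{\R^2}\phi_1(k)\phi_2(k)\,\d k,
\end{align*}
where reality of the $\phi_{i,V}$ makes the Plancherel integrand real so the $\Re$ is automatic. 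Defining $W_t(\phi)\defeq (v|\det V|)^{-1/2}N_t(\phi)$ and extending linearly in $\phi$ produces a scalar space-time white noise on $\R_+\times\R^2$ with covariance measure $\d r\,\d k$, and the identity $N_t(\phi)=\sqrt{v|\det V|}\int_0^t\!\int_{\R^2}\phi(k)\,W(\d r,\d k)$ completes \eqref{eq:Mlimit}.
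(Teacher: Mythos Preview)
Your proof is correct and follows essentially the same route as the paper's: apply Lemma~\ref{lem:Duhamel} with $\delta=0$ and $\varphi=\mathcal F\phi_V$, use the identity $Q(k)\mathcal F\phi_V(k)/2=\mathcal F(\Delta\phi/2)_V(k)$ (from $V^{-2}=-Q$ and the change of variables $y=Vz$) to convert the drift, and identify the martingale term via Plancherel and the same change of variables, which produces the factor $|\det V|$. The paper packages this slightly differently by first defining an auxiliary process $Z$ through $Z_t(\mathcal F T_V\phi)\defeq Z^0_t(\phi)$ and then unwinding, whereas you work directly with the stochastic integral $\mathcal Z_t(\psi)$; the computations are identical in content.
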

\begin{proof}[\bf Proof]
Recall $\phi_V$ and $T_V$ defined in (\ref{def:phiV}). Using the bijectivity of $\mathcal F$ and $T_V$ on $\S(\R^2)$, we define $Z$ by
$Z_t(\mathcal FT_V\phi )\defeq Z^0_t(\phi)$ for $\phi\in \S(\R^2)$. 
Then Lemma~\ref{lem:Duhamel} implies that
\begin{align}\label{Y0:eq}
Z_t(\mathcal FT_V\phi)-\int_0^tZ_s\left(\frac{Q\mathcal FT_V\phi}{2}\right)\d s,\quad 0\leq t<\infty,
\end{align}
is a continuous centered Gaussian process, and its covariance across times $0\leq s\leq t<\infty$ is given by
\begin{align*}
sv\int_{\R^2}|\mathcal F T_V\phi(k)|^2\d k=&\;sv\int_{\R^2}\big|\mathcal F\phi\big(V^{-1} k\big)\big|^2 \d k\\
=&\;sv|\det (V)|\int_{\R^2}|\mathcal F\phi(k')|^2\d k'\\
=&\;sv|\det (V)|\int_{\R^2}|\phi(k')|^2\d k',
\end{align*}
where the first two equalities follow from the change of variables $Vz'=z$ (for the Fourier transforms) and $k'=V^{-1} k$, respectively, and the last equality follows from Plancherel's identity (we use the normalization of Fourier transforms as in \cite[Section~IX.1]{RS1}). 
To rewrite the Riemann-integral term in (\ref{Y0:eq}) in terms of $\phi$, we recall $V=\sqrt{-Q^{-1}}$ and then change variables to get
\begin{align*}
\frac{Q(k)}{2}\mathcal FT_V\phi(k)
=&\;\frac{-\langle V^{-1} k,V^{-1} k\rangle}{2}\mathcal F\phi\big(V^{-1} k\big)\\
=&\;\mathcal F\left(\frac{\Delta \phi}{2}\right)\big(V^{-1} k\big)\\
=&\;\mathcal FT_V\left(\frac{\Delta \phi}{2}\right)(k). 
\end{align*}

From the last three displays, we deduce that, for a space-time white noise $W$ with covariance measure $\d r\d k$, it holds that
\begin{align*}
\sqrt{v|\det(V)|}\int_0^t\int_{\R}\phi(k)W(\d r,\d k)=&\; Z_t(\mathcal FT_V\phi)-\int_0^t Z_s\left(\frac{Q\mathcal FT_V\phi}{2}\right)\d s\\
=&\; Z_t(\mathcal FT_V\phi)-\int_0^t Z_s\left(\mathcal FT_V\left(\frac{\Delta\mathcal \phi}{2}\right)\right)\d s\\
=&\; Z^0_t(\phi)-\int_0^tZ^0_s\left(\frac{\Delta\phi}{2}\right)\d s,
\end{align*}
as required in (\ref{eq:Mlimit}). 
\end{proof}

\begin{rmk}
Pathwise explicit solutions for additive stochastic heat equations in general can be found in \cite[Theorem~5.1 on page 342]{Walsh}. See also \cite{Kurtz} for uniqueness theorems for stochastic equations.\\
\mbox{}   \hfill $\blacksquare$
\end{rmk}

\section{Convergence of the deterministic parts}\label{sec:drift}
In this section, we prove convergence of the $\S'(\R^2)$-valued processes $Y^\delta$ defined by (\ref{def:Adelta}) as $\delta\to 0+$.

\begin{prop}\label{prop:Alimit}\sl
Let $\{\mu^\delta\}_{\delta\in (0,1)}\subset\ell_1(\Z^2)$ satisfying (\ref{ass:IC}) be given and $(P_t)$ denote the transition semigroup of the two-dimensional standard Brownian motion. Then $Y^\delta$ is an $\S'(\R^2)$-valued continuous process for every $\delta\in(0,1)$. Also it holds that, for all $\phi\in \S(\R^2)$,
\begin{align}\label{conv:A}
Y^\delta_t(\phi)\xrightarrow[\delta\to 0+]{}Y^0_t(\phi)\;\defeq \;|\det(V)| \mu^0(P_t\phi)\quad\mbox{ in }C(\R_+,\R).
\end{align}
\end{prop}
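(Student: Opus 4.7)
The plan is to substitute the explicit Fourier representation (\ref{def:AinX}) of $\eta^{\infty,\delta}$ into the definition of $Y^\delta_t(\phi)$, rescale $k\mapsto\delta^{1/2}k'$ to work on the natural Fourier scale, and exploit the drift cancellation hidden in the floor-function shift to expose the Gaussian kernel $e^{tQ(k')/2}$ in the limit. After Fubini and the preliminary change of variable $z=Vy$ (so that $\phi(z)\,dz = \phi_V(y)\,dy$, cf.\ (\ref{def:phiV})), the identity $e^{i\langle \delta^{1/2}k',\delta^{-1}Ut\rangle}=e^{i\delta^{-1/2}t\langle k',U\rangle}$ cancels against the $-i\langle k,U\rangle$ term at the top of the Taylor expansion (\ref{A:Taylorintro}) of $\widehat{A}$, leading to
\[
Y^\delta_t(\phi) = \frac{1}{(2\pi)^2}\int_{\delta^{-1/2}\T^2}E^\delta_t(k')\cdot\bigl[\delta\widehat{\mu^\delta}(\delta^{1/2}k')\bigr]\cdot I^\delta_t(k')\,dk',
\]
where $E^\delta_t(k')= \exp\bigl(\delta^{-1}t[\widehat{A}(\delta^{1/2}k')+i\langle\delta^{1/2}k',U\rangle]\bigr)$ and $I^\delta_t(k') = \int_{\R^2}\phi_V(y)\exp\bigl(i\langle\delta^{1/2}k',\lfloor\delta^{-1}Ut+\delta^{-1/2}y\rfloor-\delta^{-1}Ut\rangle\bigr)\,dy$. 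The $\S'(\R^2)$-valued property and continuity of $t\mapsto Y^\delta_t(\phi)$ follow from this representation and the bounds established below.

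Next I would establish uniform-in-$(\delta,t)$ bounds and pointwise limits of the integrand. Assumption~\ref{ass}(5) together with (\ref{QR:ineq}) gives $|E^\delta_t(k')|\leq \exp(-C_{\ref{QR:ineq}}t|k'|^2/2)$ on $\delta^{-1/2}\T^2$, while the Taylor expansion and the definition (\ref{def:U}) of $U$ imply $E^\delta_t(k')\to e^{tQ(k')/2}$ pointwise as $\delta\to 0+$. For $I^\delta_t(k')$, applying the semi-discrete integration by parts of Proposition~\ref{prop:IBP2} to $\phi_V\in\S(\R^2)$ yields a bound $|I^\delta_t(k')|\leq C_N/(1+|k'|)^N$ for every $N\in\mathbb{N}$, uniformly in $(\delta,t)$; and the elementary bound $\bigl|\lfloor\delta^{-1}Ut+\delta^{-1/2}y\rfloor-\delta^{-1}Ut-\delta^{-1/2}y\bigr|=O(1)$ from (\ref{floorbound}) forces $I^\delta_t(k')\to 2\pi\mathcal{F}\phi_V(k')$ pointwise in $k'$ and uniformly in $t$ on compact intervals.

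To identify the limit, I would invoke (\ref{ass:IC}) in Fourier-dual form: for any Schwartz $g$, Fubini gives
\[
\int_{\R^2}g(k')\,\delta\widehat{\mu^\delta}(\delta^{1/2}k')\,dk' = 2\pi\,\delta\sum_{x\in\Z^2}\mu^\delta(x)\,\check{g}(\delta^{1/2}x),
\]
where $\check{g}$ is the inverse Fourier transform. Since $V$ is symmetric, rewriting $\delta^{1/2}x=V^{-1}(\delta^{1/2}Vx)$ identifies this as the Riemann sum in (\ref{ass:IC}) applied to the Schwartz function $y\mapsto\check{g}(V^{-1}y)$, so the left-hand side converges to $2\pi\mu^0(\check{g}\circ V^{-1})$. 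Applying this with $g(k')=e^{tQ(k')/2}\cdot 2\pi\mathcal{F}\phi_V(k')\in\S(\R^2)$, a direct computation using $Q(V\ell)=-|\ell|^2$ (from $V=\sqrt{-Q^{-1}}$) and $\mathcal{F}\phi_V(V\ell)=\mathcal{F}\phi(\ell)$ shows $g\circ V = 2\pi\mathcal{F}(P_t\phi)$, hence $\check{g}\circ V^{-1}=2\pi|\det(V)|P_t\phi$ by the Jacobian in the inverse variable change. Combined with the dominated convergence theorem, justified by the bounds of the preceding paragraph, this delivers $Y^\delta_t(\phi)\to|\det(V)|\mu^0(P_t\phi)$ for each fixed $t\in\R_+$.

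The hard step is to upgrade pointwise-in-$t$ convergence to convergence in $C(\R_+,\R)$. The Gaussian bound on $E^\delta_t(k')$, the $t$-independent polynomial bound on $I^\delta_t(k')$, and the smooth $t$-dependence of the remaining factors yield an equicontinuity estimate for $t\mapsto Y^\delta_t(\phi)$ uniform in $\delta\in(0,1)$; together with pointwise convergence this gives uniform convergence on compact subintervals of $\R_+$, which is exactly the conclusion. The principal technical obstacle is that the Taylor remainder in $E^\delta_t$ is of order $\delta^{1/2}|k'|^3$, which is not small near the boundary $|k'|\sim\delta^{-1/2}$ of the integration domain; it is precisely the Gaussian envelope coming from (\ref{QR:ineq}) (rather than a small-exponent Taylor argument) that tames this region and enables dominated convergence.
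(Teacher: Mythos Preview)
Your strategy---insert the Fourier representation (\ref{def:AinX}), rescale to $\delta^{-1/2}\T^2$, bound $I^\delta_t$ via Proposition~\ref{prop:IBP2}, and read off the heat kernel from the limit of $E^\delta_t$---is exactly the paper's. The paper organizes it as showing that the $y$-kernel
\[
F^\delta_t(y)\;=\;\int_{\delta^{-1/2}\T^2}\!\!dk\;E^\delta_t(k)\,e^{-\i\langle k,V^{-1}y\rangle}\,I^\delta_t(k)
\]
converges to $|\det(V)|P_t\phi(y)$ in $\S(\R^2)$, and then pairs with the discrete measure $\Lambda^\delta(\psi)=\sum_y \delta\mu^\delta(\delta^{-1/2}V^{-1}y)\psi(y)$.

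There is, however, a genuine gap at the step you label ``dominated convergence.'' To pass from $\int E^\delta_t\,I^\delta_t\cdot[\delta\,\widehat{\mu^\delta}(\delta^{1/2}k')]\,dk'$ to $\int g(k')\cdot[\delta\,\widehat{\mu^\delta}(\delta^{1/2}k')]\,dk'$ you need control on the factor $\delta\,\widehat{\mu^\delta}(\delta^{1/2}k')$, and the only available bound is $|\delta\,\widehat{\mu^\delta}|\le \delta\|\mu^\delta\|_{\ell_1}$. Nothing in the hypotheses forces $\delta\|\mu^\delta\|_{\ell_1}$ to stay bounded: assumption~(\ref{ass:IC}) is purely a weak-$*$ statement in $\S'(\R^2)$, and one can cook up $\mu^\delta\in\ell_1$ with $\delta\|\mu^\delta\|_{\ell_1}\to\infty$ whose Riemann sums against Schwartz tests still converge. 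So dominated convergence is not available, and the same missing bound undermines your equicontinuity argument for the upgrade to $C(\R_+,\R)$, since any modulus-of-continuity estimate you write for $Y^\delta_t(\phi)$ will carry the factor $\delta\|\mu^\delta\|_{\ell_1}$.

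The paper closes this gap functionally-analytically rather than by estimation: since $\S(\R^2)$ is Fr\'echet (barrelled), the pointwise-bounded family $\{\Lambda^\delta\}\subset\S'(\R^2)$ is equicontinuous by Banach--Steinhaus, hence $\Lambda^\delta\to\mu^0$ \emph{uniformly on compact subsets of $\S(\R^2)$}. One then shows $F^\delta_{t_\delta}\to F^0_t$ in $\S(\R^2)$ for every sequence $t_\delta\to t$ (this is where your bounds on $E^\delta_t$ and $I^\delta_t$---together with their $k$-derivatives, which you did not mention but which Proposition~\ref{prop:IBP2} also supplies---are used). Equicontinuity plus $\S$-convergence of the kernel give $\Lambda^\delta(F^\delta_{t_\delta})\to\mu^0(F^0_t)$ directly, which simultaneously handles the pointwise limit and the uniform-on-compacts upgrade without ever bounding $\delta\|\mu^\delta\|_{\ell_1}$. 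Your argument becomes correct once you replace ``dominated convergence'' by this uniform-on-compacts consequence of Banach--Steinhaus.
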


\begin{proof}[\bf Proof] 
We divide the proof into the following steps.

\paragraph{\bf Step 1.}
We begin with the observation that all the functionals in (\ref{ass:IC}) are in $\S'(\R^2)$ and the convergence holds uniformly on compact subsets of $\S(\R^2)$. To see the former, simply note that, by the assumption that $\mu^\delta\in \ell_1(\Z^2)$, each functional in (\ref{ass:IC}) for $\delta\in (0,1)$ is in $\S'(\R^2)$. Hence, the convergence in (\ref{ass:IC}) is with respect to the weak topology of $\S'(\R^2)$. Since $\S(\R^2)$ is a Frech\'et space \cite[Theorem~V.9]{RS1},  it follows from \cite[Theorem~V.8]{RS1}
that the tempered distributions in (\ref{ass:IC}) converge uniformly on compact subsets of $\S(\R^2)$ as $\delta\to 0+$. 
\vspace{-.1cm}

\paragraph{\bf Step 2.}
Let us start the proof of (\ref{conv:A}) in this step and derive an explicit formula of $Y^\delta(\phi)$ for a fixed $\phi\in \S(\R^2)$. 

To get the formula, first we use (\ref{def:AinX}):
\begin{align*}
&\;\eta^{\infty,\delta}_{\delta^{-1}t}\big(\lfloor \delta^{-1} Ut+\delta^{-1/2}V^{-1}z\rfloor\big)\notag\\
=&\;\frac{1}{(2\pi)^2}\int_{\T^2}\d ke^{\delta^{-1}t\A(k)}e^{\i\langle k,\lfloor \delta^{-1} Ut+\delta^{-1/2}V^{-1}z\rfloor\rangle}\widehat{\mu^\delta}(k)\notag\\
=&\;\sum_{x\in \Z^2}\mu^\delta(x)\frac{1}{(2\pi)^2}\int_{\T^2}\d ke^{\delta^{-1}t\A(k)}e^{-\i\langle k,x\rangle}e^{\i\langle k,\lfloor \delta^{-1} Ut+\delta^{-1/2}V^{-1}z\rfloor\rangle}\notag\\
=&\;\sum_{y\in \delta^{1/2}V\Z^2}\delta\mu^\delta(\delta^{-1/2}V^{-1}y) \\
&\;\times \frac{1}{(2\pi)^2} \int_{\delta^{-1/2}\T^2}\d ke^{\delta^{-1}t\A(\delta^{1/2}k)}e^{-\i\langle k,V^{-1}y\rangle}e^{\i\langle \delta^{1/2} k, \lfloor \delta^{-1} Ut+\delta^{-1/2}V^{-1}z\rfloor\rangle},
\end{align*}
where we use the assumption that $\mu^\delta\in \ell_1(\Z^2)$ in the second equality. By the definition (\ref{def:Adelta}) of $Y^\delta(\phi)$ and the last equality, we can write
\begin{align}
\begin{split}
Y^\delta_t(\phi) =&\sum_{y\in \delta^{1/2}V\Z^2}\delta\mu^\delta(\delta^{-1/2}V^{-1}y)\\&\times \int_{\delta^{-1/2}\T^2}\d ke^{\delta^{-1}t\A(\delta^{1/2}k)+\i\langle k,\delta^{-1/2}Ut\rangle}e^{-\i\langle k,V^{-1}y\rangle}\\ &\times \int_{\R^2}\d z\frac{1}{(2\pi)^2} e^{\i\langle \delta^{1/2}k, \lfloor \delta^{-1} Ut+\delta^{-1/2}V^{-1}z\rfloor\rangle-\i\langle \delta^{1/2}k,\delta^{-1}Ut\rangle}\phi(z).\label{A2222111}
\end{split}
\end{align}
\mbox{}\vspace{-.2cm}

\paragraph{\bf Step 3.}
To find the limiting process of $Y^\delta(\phi)$ as $\delta\to 0+$ by (\ref{A2222111}), we claim in this step and the next steps that the following convergence holds for functions of $y$ in $\S(\R^2)$: for all $t\in \R_+$ and sequences $  t_\delta\to t$,
\begin{align}
\quad &\int_{\delta^{-1/2}\T^2}\d ke^{\delta^{-1}t_\delta\A(\delta^{1/2}k)+\i\langle \delta^{1/2}k,\delta^{-1}Ut_\delta\rangle}e^{-\i\langle k,V^{-1}y\rangle}\notag\\
&\hspace{.5cm}\times\int_{\R^2}\d z\frac{1}{(2\pi)^2} e^{\i\langle \delta^{1/2} k, \lfloor \delta^{-1} Ut_\delta+\delta^{-1/2}V^{-1}z\rfloor\rangle-\i\langle \delta^{1/2}k,\delta^{-1}Ut_\delta\rangle}\phi(z)\notag\\
&\hspace{1cm}\xrightarrow[\delta\to 0+]{}\frac{1}{(2\pi)^2}\int_{\R^2}\d k e^{tQ(k)/2-\i\langle k,V^{-1}y\rangle}
\int_{\R^2}\d ze^{\i\langle k,V^{-1}z\rangle}\phi(z).\label{conv:Adelta}
\end{align}

With the notation $\phi_V$ defined by (\ref{def:phiV}), 
proving the convergence in (\ref{conv:Adelta})  amounts to showing that, for any multi-indices $\beta,\gamma\in \mathbb  Z_+^2$, the following convergence holds uniformly as functions of $y\in \R^2$:
\begin{align}
&\int_{\delta^{-1/2}\T^2}\d k y^\beta k^\gamma  e^{\delta^{-1}t_\delta\A(\delta^{1/2}k)+\i\langle \delta^{1/2}k,\delta^{-1}Ut_\delta\rangle}e^{-\i\langle k,V^{-1}y\rangle}\notag\\
&\hspace{.5cm}\times\int_{\R^2}\d z\frac{1}{(2\pi)^2} e^{\i\langle \delta^{1/2} k, \lfloor \delta^{-1} Ut_\delta+\delta^{-1/2}z\rfloor\rangle-\i\langle \delta^{1/2}k,\delta^{-1}Ut_\delta\rangle}\phi_V(z)\notag\\
&\hspace{1cm}\xrightarrow[\delta\to 0+]{}\frac{1}{(2\pi)^2}\int_{\R^2}\d k y^\beta k^\gamma e^{tQ(k)/2-\i\langle k,V^{-1}y\rangle}
\int_{\R^2}\d ze^{\i\langle k,z\rangle}\phi_V(z).\label{conv:Adelta+}
\end{align}
That is, after a change of variables in $z$, we add multiplicative factors $y^\beta k^\gamma$ to the integrands of all the integrals in (\ref{conv:Adelta}) with respect to $\d k$ and then consider the corresponding uniform convergence.
\mbox{}\vspace{-.1cm}

\paragraph{\bf Step 4.}
We  prove (\ref{conv:Adelta+}) in this step and make two observations before that. 

First, observe that for any $m,n\in \mathbb  Z_+$, we can find a constant $C_{\ref{A:conv_n}}>0$ independent of $\delta$ such that 
\begin{align}
\begin{split}
\label{A:conv_n}
&\sup_{\alpha:|\alpha|=m}\sup_{\delta\in (0,1)}\sup_{t\in [0,T]}\left|\frac{\partial^{\alpha}}{\partial k^{\alpha}}\int_{\R^2}\d z\frac{1}{(2\pi)^2} e^{\i\langle \delta^{1/2} k, \lfloor \delta^{-1} Ut+\delta^{-1/2}z\rfloor\rangle-\i\langle \delta^{1/2}k,\delta^{-1}Ut\rangle}\phi_V(z)\right|\\
&\hspace{8.4cm}\leq \frac{C_{\ref{A:conv_n}}}{1+|k|^{n}},\quad \forall\; k\in \delta^{-1/2}\T^2.
\end{split}
\end{align}
To see (\ref{A:conv_n}), we apply Proposition~\ref{prop:IBP2} with the following two inputs: (1) the discrete Leibniz rule for $\Delta_{\delta,1}$ defined by (\ref{def:Delta1}): 
\begin{align}\label{eq:Leibniz}
\Delta^n_{\delta,1}(fg)(z)=\sum_{\ell=0}^n{n\choose \ell}\Delta_{\delta,1}^\ell (f)(z)\times \Delta_{\delta,1}^{n-\ell}(g)(z_1-\ell\delta^{1/2},z_2),\quad \forall\;n\geq 1,
\end{align}
and its analogue for $\Delta_{\delta,2}$ to expand the partial difference $\Delta^n_{\delta,j}(\lfloor \cdot\rfloor^\alpha_{\delta,t}\phi_V)$ in (\ref{dIBP2}) into sums of products of $\Delta^{\ell_1}_{\delta,j}\phi_V$ and $\Delta^{\ell_2}_{\delta,j}\lfloor \cdot\rfloor_{\delta,t,j}$ 
and then (2) the fact that the partial differences $\Delta^{\ell_2}_{\delta,j}(\lf \cdot \rf_{\delta,t,j})\equiv 1$ if $\ell_2=1$ by the definition (\ref{def:floor}) of $\lf \cdot \rf_{\delta,t,j}$ and so $\equiv 0$ whenever $\ell_2\geq 2$.

The second observation for the proof of (\ref{conv:Adelta+})  is that we can use (\ref{QR:ineq}), (\ref{A:RI}) and (\ref{I:ineq}) to get the following bound:
\begin{align}\label{A:conv_n0}
\sup_{\alpha\in \Z^2_+:|\alpha|=m}\sup_{s\in [0,T]}\sup_{k\in \delta^{-1/2}\T^2}\left|\frac{\partial^{\alpha}}{\partial k^{\alpha}}e^{\delta^{-1}s\A(\delta^{1/2}k)+\i\langle \delta^{1/2}k,\delta^{-1}Us\rangle}\right|<\infty,\quad \forall\; T\in (0,\infty).
\end{align}
Note that Assumption~\ref{ass} (4) and (5) are used to obtain (\ref{A:conv_n0}).

The two observations (\ref{A:conv_n})  and (\ref{A:conv_n0}) can be applied to the integrals in (\ref{conv:Adelta+})
indexed by $\delta$ by integration by parts with respect to $y_j$, $|\beta|$ times for each $j\in \{1,2\}$. Indeed, integration by parts with respect to $k_j$
once brings out a multiplicative factor $1/[ -\i(V^{-1}y)_j]$ from $e^{-\i\langle k,V^{-1}y\rangle}$ (whenever $(V^{-1}y)_j\neq 0$) and the boundary terms vanish as $\delta\to 0+$ by (\ref{A:conv_n}) and (\ref{A:conv_n0}). 
This proves (\ref{conv:Adelta+}), and hence, the convergence in (\ref{conv:Adelta}).

\vspace{-.1cm}

\paragraph{\bf Step 5.}
In this step, we evaluate the limit of $Y_t^\delta(\phi)$ as $\delta\to 0+$ for fixed $t$.

The limiting integral in (\ref{conv:Adelta}) with respect to $k$ over $\R^2$ can be simplified as follows: with the change of variables $k=V j/\sqrt{t}$,
\begin{align*}
&\;\frac{1}{(2\pi)^2}\int_{\R^2}\d k e^{tQ(k)/2-\i\langle k,V^{-1}y\rangle+\i\langle k,V^{-1}z\rangle}\\
=&\;\frac{|\det(V)|}{(2\pi)^2 t}\int_{\R^2}dje^{-|j|^2/2-\i\langle j, y/\sqrt{t} \rangle+\i\langle j,z/\sqrt{t}\rangle}
=\frac{|\det(V)|}{2\pi t}\exp\left(-\frac{|y-z|^2}{2t}\right)
\end{align*}
and so
\begin{align}\label{A:int}
\int_{\R^2}\d z\phi(z)\frac{1}{(2\pi)^2}\int_{\R^2}\d k e^{tQ(k)/2-\i\langle V^{-1}y,k\rangle+\i\langle k,V^{-1}z\rangle}
=|\det(V)|P_t\phi(y),
\end{align}
where $(P_t)$ is the semigroup of the two-dimensional standard Brownian motion.

\vspace{-.1cm}

\paragraph{\bf Step 6.} By the remark at the beginning of this proof, (\ref{A2222111}), (\ref{conv:Adelta}) and (\ref{A:int}), we deduce the uniform convergence of $Y^\delta_t(\phi)$ to $|\det(V)|\mu^0(P_t\phi)$ on compacts in $t$. This completes the proof of (\ref{conv:A}). 
\end{proof}

\section{List of frequent notations for Sections~\ref{sec:rescale}--\ref{sec:drift}}\label{sec:LON}

\noindent $D^\delta$: the difference $Z^\delta-Z^{\delta,c}$ defined in (\ref{def:DdeltaZ}).\\
\noindent $\mathcal F\phi$: the Fourier transform of $\phi$ with a normalization in (\ref{def:FT}).\\
\noindent $J^{a-1}$: the integral operator defined in (\ref{def:J1}).\\
\noindent $J^{-a}$: the stochastic integral operator defined in (\ref{def:J2}).\\
\noindent $Q$: the $2\times 2$ strictly negative definite matrix defined in Assumption~\ref{ass} (4).\\
\noindent $Q(k)$: the function $\langle k,Qk\rangle$ also defined in Assumption~\ref{ass} (4).\\
\noindent $R(k)$: twice the real part of $\A(k)$ defined in (\ref{def:RA}).\\
\noindent $\mathbb  S_\delta$: the sine-like function defined in (\ref{def:Sdelta}) with main properties used in (\ref{prop:Sdelta}).\\ 
\noindent $U$: the two-dimensional real vector defined in (\ref{def:U}).\\
\noindent $V$: the square root of $-Q^{-1}$. See (\ref{def:V}).\\
\noindent $\int\!\int \!\mathbb  V\Phi(r,k) \mathbb  W(dr,dk)$: a sum of stochastic integrals defined in (\ref{def:VW}). \\
\noindent $X^\delta$: the rescaled $\S'(\R^2)$-valued process defined in (\ref{def:Xdelta}).\\
\noindent $Y^\delta$: the deterministic part of $X^\delta$ defined in (\ref{def:Adelta}).\\
\noindent $Z^\delta$: the stochastic part of $X^\delta$ defined in (\ref{def:Mdelta}).\\
\noindent $Z^{\delta,c}$: the stochastic part defined in (\ref{def:Mdeltac}) without the floor function in $Z^\delta$.\\
\noindent $\Delta_{\delta,1}$: the partial difference operator defined in (\ref{def:Delta1}).\\
\noindent $\varphi^{\delta}_t=\varphi^{\delta,1}+\varphi_t^{\delta,2}
$: an auxiliary function defined in (\ref{def:varphi}) decomposed  in (\ref{dec:varphi}).\\
\noindent $\phi_V(z)=T_V\phi(z)$: an auxiliary function for $\phi\in \S(\R^2)$ defined in (\ref{def:phiV}).\\
\noindent $\lf z\rf_{\delta,t},\lf z\rf_{\delta,t,j},\lf z_j\rf_{\delta,t,j}$: the modified floor functions  on rescaled lattices defined in (\ref{def:floor}).

\end{document}